\documentclass[a4paper]{article}

\pdfoutput=1

\usepackage[a4paper]{geometry}

\usepackage{amsmath}
\usepackage{graphicx}

\usepackage{amsfonts}
\usepackage{mathtools}
\usepackage{stmaryrd}
\usepackage{bussproofs}
\usepackage{tikz-cd}

\usepackage{mymacros}

\usepackage{amsthm}

\usepackage{hyperref}

\hypersetup{
    colorlinks=true,
    linkcolor=blue,
    citecolor=blue,
    filecolor=blue,
    urlcolor=blue,
}

\makeatletter
\renewenvironment{proof}[1][\proofname]{\par
  \pushQED{\qed}
  \normalfont \topsep6\p@\@plus6\p@\relax
  \trivlist
  \item\relax
  {\itshape #1\@addpunct{.}}
  \hspace\labelsep\ignorespaces
  }
  {\popQED\endtrivlist\@endpefalse}
\makeatother

\newtheorem*{theorem}{Theorem}
\newtheorem{therm}{Theorem}
\newtheorem{lemma}[therm]{Lemma}
\newtheorem{proposition}[therm]{Proposition}

\newtheorem{definition}[therm]{Definition}
\theoremstyle{definition}
\newtheorem{example}[therm]{Example}
\theoremstyle{remark}
\newtheorem{remark}[therm]{Remark}

\title{The Scott model of PCF in univalent type theory}
\author{Tom de Jong \vspace{.2cm} \\
  \small{University of Birmingham, Birmingham, United Kingdom} \\
  \small{Email: \texttt{t.dejong@pgr.bham.ac.uk}}}
\date{\small{\today}}

\begin{document}

\maketitle

\begin{abstract}
  We develop the Scott model of the programming language PCF in univalent type
  theory. Moreover, we work constructively and predicatively. To account for the
  non-termination in PCF, we use the lifting monad (also known as the
  partial map classifier monad) from topos theory, which has been extended to
  univalent type theory by Escard\'o and Knapp. Our results show that lifting is
  a viable approach to partiality in univalent type theory. Moreover, we show
  that the Scott model can be constructed in a predicative and constructive
  setting. Other approaches to partiality either require some form of choice or
  quotient inductive-inductive types. We show that one can do without these
  extensions.
\end{abstract}

\section{Introduction}\label{sec:introduction}
We develop the Scott model of the programming language PCF in constructive
predicative univalent mathematics. In 1969, Dana Scott~\cite{Scott1993} proposed a
logic (LCF) for computing with functionals. In 1977, Gordon Plotkin~\cite{Plotkin1977}
considered LCF as a programming language (PCF); introducing operational
semantics based on Scott's logic and proving (and formulating) soundness and
computational adequacy. Later, the techniques of Scott and Plotkin were extended
to many other programming languages~\cite{Plotkin1983}. These developments all
took place in (informal) set theory with classical logic.

Our aim is to test these techniques in Voevodsky's constructive univalent type
theory~\cite{HoTTbook}. Our development differs from the classical
approach~\cite{Streicher2006} in three key ways. First of all, we have
situated our development in the framework of univalent mathematics. Secondly,
our work takes place in a constructive meta-theory. Thirdly, we work
predicatively (meaning we do not assume propositional resizing).

The essential difference (for our development) between univalent type theory on
the one hand, and set theory or systems like Coq on the other, is the treatment
of truth values (propositions). We will discuss manifestations of this
difference in Section~\ref{univalentdifferences} and throughout the paper.

\subsection{Technical preliminaries}
In this section we briefly explain the syntax of PCF and its computational
behaviour. Moreover, we recall the notion of denotational semantics and the
Scott model of PCF (in a classical setting) in particular. We also mention two
fundamental properties that a model of PCF should enjoy: soundness and
computational adequacy. Finally, we recall the lifting monad in the context of
univalent type theory and sketch the construction of the Scott
model in constructive univalent type theory.

\subsubsection{PCF}\label{background:PCF}
PCF~\cite{Plotkin1977} is a typed programming language. A detailed description
of PCF is given in Section~\ref{sec:PCF}. We briefly discuss its most characteristic
features here. PCF is a typed \(\lambda\)-calculus with additional
constants. For example, we have numerals \(\underline{n}\) of base type
\(\iota\) corresponding to natural numbers and basic operations on them, such as
a predecessor term \(\pred\) and a term \(\ifz\) that allows us to
perform case distinction on whether an input is zero or not. The most striking
feature of PCF is its fixed point combinator \(\fix_\sigma\) for every
PCF type \(\sigma\). The idea is that for a term \(t\) of function type
\(\sigma \Rightarrow \sigma\), the term \(\fix_\sigma\,t\) of type
\(\sigma\) is a fixed point of \(t\). The use of \(\fix\) is that it
gives us general recursion.

The operational semantics of PCF is a reduction strategy that allows us to
compute in PCF. We write \(s \smallstep t\) for \(s\) reduces to \(t\). We show a
few examples below:
\[
  \pred \underline{0} \smallstep \underline{0}; \quad
  \pred \underline{n+1} \smallstep \underline{n}; \quad
  {\ifz s\, t\, \underline{0}} \smallstep s; \quad
  {\ifz s\, t\, \underline{n+1}} \smallstep t; \quad
  \fix f \smallstep f(\fix f).
\]
We see that \(\pred\) indeed acts as a predecessor function and that \(\ifz\)
performs case distinction on whether its third argument is zero or not. The
reduction rule for \(\fix\) reflects that \(\fix f\) is a fixed point \(f\) and
may be seen as an unfolding (of a recursive definition).

As an example of the use of \(\fix\), consider a function \(g\) on the natural
numbers given by the recursive definition: \(g(0) \coloneqq s\) and
\(g(n+1) \coloneqq t(g(n))\). We can define \(g\) in PCF as \(\fix G\) where
\(G \coloneqq \lambda (f : \iota \Rightarrow \iota).\lambda (x:\iota).\ifz
s\,(t(\pred x))\,x\). Having general recursion also introduces non-termination,
as for example the successor function on naturals has no fixed point.

Instead of the formulation by Plotkin~\cite{Plotkin1977}, which features variables
and \(\lambda\)-abstraction, we revert in Section~\ref{sec:PCF} to the original,
combinatory, formulation of the terms of LCF by Scott~\cite{Scott1993} in order
to simplify the technical development.

\subsubsection{Models of PCF}\label{background:models}
We have seen that the operational semantics give meaning to the PCF terms by
specifying computational behaviour. Another way to give meaning to the PCF terms
is through denotational semantics, i.e.\ by giving a model of PCF.
A model of PCF assigns to every PCF type $\sigma$ some mathematical structure
$\densem{\sigma}$ and to every PCF term $t$ of type $\sigma$ an element
$\densem{t}$ of $\densem{\sigma}$.

\paragraph{Soundness and computational adequacy.} Soundness and computational
adequacy are important properties that a model of PCF should have.

Soundness states that if a PCF term $s$ computes to $t$ (according to the
operational semantics), then their interpretations are equal in the model
(symbolically, $\densem{s} = \densem{t}$).

Computational adequacy is completeness at the base type \(\iota\). It says that
for every term $t$ of type $\iota$ and every natural number $n$, if
$\densem{t} = \densem{\underline n}$, then $t$ computes to $\underline n$.

\paragraph{The Scott model, classically.}
To model PCF and its non-termination, Dana Scott~\cite{Scott1993} introduced the Scott
model: a type is interpreted as a directed complete poset with a least element
(or dcpo with $\bot$, for short). Concretely, PCF types are interpreted as
follows.

\paragraph*{Interpreting the base type $\iota$.}  One proves that adding a
least element $\bot_{\mathbb N}$ to the set $\mathbb N$ of natural numbers
yields a dcpo with $\bot$, known as the \emph{flat natural numbers}. This is
then the interpretation of the base type $\iota$. This least element
$\bot_{\mathbb N}$ serves as the interpretation of a term of type $\iota$ that
does not compute to a numeral, like \(\fix\succc\) where \(\succc\) denotes the
successor map on \(\iota\).

\paragraph*{Interpreting function types.}  Function types are interpreted by
considering continuous maps (i.e.\ monotone maps that preserve directed suprema)
between two dcpos with $\bot$. Such maps can be ordered pointwise to form
another dcpo with $\bot$.

\paragraph*{}
A striking feature, and the crux of the Scott model, is that every continuous
map has a (least) fixed point. Moreover, the assignment of a continuous map to
its least fixed point is continuous. This allows us to soundly interpret the
characteristic $\fix$ term of PCF.

The Scott model was proved sound and computationally adequate
by Plotkin~\cite{Plotkin1977}. A modern presentation may be found in Streicher's~\cite{Streicher2006}.

\paragraph{Issues with constructivity.}
While the interpretation of function types goes through constructively, the
above interpretation of the base type \(\iota\) is problematic from a
constructive viewpoint. Indeed, the proof that the flat natural numbers form a
dcpo relies on classical reasoning in its analysis of the directed subsets:
excluded middle allows us to prove that every directed subset of the flat
natural numbers is exactly one of \(\{\bot\}\), \(\{\bot,n\}\) or \(\{n\}\) for
some natural number \(n\). In fact, we can show that this reliance is in some
sense essential: in Section~\ref{sec:constructiveissuespartiality} we prove that
if the flat natural numbers form a dcpo, then the Limited Principle of
Omniscience (LPO) holds. This principle asserts that every binary sequence is
either \(0\) everywhere or it attains the value \(1\) at some point. LPO is not
constructively acceptable~\cite[p.~9]{Bishop1967}, it is even provably false in
some varieties of constructive mathematics~\cite[pp.~3--4]{BridgesRichman1987},
and it is independent of Martin-L\"of Type Theory~\cite{Escardo2018}.

\subsubsection{Univalent type theory}\label{univalentdifferences}

As mentioned at the beginning of Section~\ref{sec:introduction}, an essential
difference between univalent type theory on the one hand, and set theory or
systems like Coq on the other, is the treatment of truth values
(propositions). To illustrate this difference, consider the definition of a
poset (cf.\ Definition~\ref{def:poset}).

\begin{example}\label{ex:posetuniv}
  In set theory, the mathematical structure is provided by a set $X$ and a
  binary relation $\leq$ on $X$. Moreover, this relation is required to be
  reflexive, transitive and antisymmetric. Reflexivity,
  $\forall_{x \in X} x \leq x$ is a logical statement that is bivalent.

  In type theory, if we define ${\leq} : {X \to X \to \mathsf{Type}}$, with
  $\mathsf{Type}$ some type universe, then the type encoding reflexivity,
  $\prod_{x : X} x \leq x$, may have more than one element. This is a
  fundamental difference with set theory.

  In Coq, we could instead define ${\leq} : {X \to X \to \texttt{Prop}}$, where
  $\texttt{Prop}$ is Coq's special sort of propositions. This sort is defined
  such that (for instance) reflexivity, \(\forall_{x : X}\,x \leq x\), is again
  in~\(\texttt{Prop}\).

  The crucial difference between these approaches and the univalent approach, is
  that in univalent type theory, we \emph{prove} that something is a proposition
  (truth value). Following Voevodsky, we define a type to be a proposition
  (truth value, subsingleton) if it has at most one element with respect to its
  identity type, i.e.\ up to propositional equality. To define posets, we then
  ask for a witness that the type $x \leq y$ is a proposition for every
  $x,y : X$. This allows us, in the presence of function extensionality (which
  is a consequence of the univalence axiom), to prove that reflexivity and
  transitivity are propositions. For example, for reflexivity, we wish to show
  that the type \(\prod_{x : X} x \leq x\) is a proposition. So let \(f,g\) be
  two elements of this type. By~function extensionality, it suffices to show
  that \(f(x) = g(x)\) for every \(x : X\). But the type of \(f(x)\) and
  \(g(x)\) is \(x \leq x\), which is a proposition by requirement, so \(f(x)\)
  and \(g(x)\) must be (propositionally) equal, as desired. Finally, we require
  $X$ to be a set: any two elements of $X$ are equal in at most one way. This
  ensures, using function extensionality again, that antisymmetry is a
  proposition.
\end{example}

Sometimes, we will want to make a type into a proposition, by identifying its
elements. This is achieved through the propositional truncation, a higher
inductive type. For example, we will need it to define directed families
(Definition~\ref{def:isdirected}), but also to define the reflexive transitive
closure of a proposition-valued relation
(Definition~\ref{def:refltransclos}). We will further explain these examples in
the main text. The universal property of the propositional truncation is
described in Section~\ref{sec:overview}. For more on propositions, sets and
propositional truncation in univalent type theory, see \cite[Chapter~3]{HoTTbook}.

\subsection{Overview of results}\label{sec:overview}
We work in intensional Martin-L\"of Type Theory with inductive types (including
the empty $\emptyt$, unit $\unitt$, natural numbers $\natt$, and identity
types), $+$-, $\Sigma$- and $\Pi$-types. As usual, we simply write \(x=y\) for
the identity type \(\Id_X(x,y)\), use \({\equiv}\) for the judgemental equality
and write \({\simeq}\) for Voevodsky's notion of type equivalence.

We need (at least) two universes $\mathcal U_{0}, \mathcal U_{1}$ closed under
$+$-, $\Sigma$- and $\Pi$-types, such that $\mathcal U_{0}$ contains $\emptyt$,
$\unitt$ and $\natt$, while $\mathcal U_1$ contains $\mathcal U_0$. We work
predicatively, i.e.\ we do not assume propositional resizing, so the type of
propositions in \(\mathcal U_0\), denoted by \(\Omega\), lives in the universe
\(\mathcal U_1\).

We also assume two extensionality axioms. The first is function extensionality,
which asserts that pointwise equal functions are equal. Given two (dependent)
functions \(f,g : \prod_{a : A}B(a)\), we write \(f \sim g\) for the type
\(\prod_{a : A} f(a) = g(a)\), often called the type of homotopies between \(f\)
and \(g\). Function extensionality makes the type \(f \sim g\) equivalent to the
identity type \(f = g\). The second is propositional extensionality, which says
that logically equivalent propositions are equal, i.e. if \(P\) and \(Q\) are
propositions, then \(P \leftrightarrow Q\) implies \(P = Q\). In the presence of
function extensionality this is equivalent to
\((P \leftrightarrow Q) \simeq (P = Q)\).

Although we do not need the univalence axiom at any point, we remark that both
extensionality axioms above follow from it. Moreover, we emphasise the
importance of the idea of truncation levels, which is fundamental to univalent
type theory.

Finally, we assume the existence of a single higher inductive type, the
propositional truncation: given a type \(X\) in a universe \(\mathcal U\), we
assume that we have a \emph{proposition} \(\squash*{X}\) in \(\mathcal U\)
with a map \(\tosquash*{-} : X \to \squash*{X}\) such that if \(P\) is a
proposition in any universe and \(f : X \to P\) is a map, then \(f\) factors
through \(\tosquash*{-}\).  Diagrammatically,
\[
  \begin{tikzcd}
    X \ar[dr, "\tosquash*{-}"'] \ar[rr, "f"] & & {P} \\
    & \squash*{X} \ar[ur, "\overline{f}"', dashed]
  \end{tikzcd}
\]
Observe that the factorisation \(\overline{f}\) is unique by function
extensionality and the fact that \(P\) is a proposition.

Our paper can be summarised as follows:

\paragraph*{Section~\ref{sec:basicdomaintheory}.}
We introduce the theory of dcpos with \(\bot\) (known as domain theory) in
predicative constructive univalent type theory. We take the carriers of the
dcpos to be sets (in the sense of univalent type theory) and the partial orders
to be proposition-valued. Propositional truncation plays an import part in
defining directedness.

\paragraph*{Section~\ref{sec:constructiveissuespartiality}.}
We elaborate on the issue with the classical construction of the Scott model in
a constructive meta-theory (cf.\ the final paragraph of
Section~\ref{background:models}).

\paragraph*{Section~\ref{sec:lifting}.}
To remedy this issue, we work instead with the lifting monad (also known as the
partial map classifier monad) from topos theory~\cite{Kock1991}, which has
been extended to constructive type theory by Reus~and~Streicher~\cite{ReusStreicher1999} and
recently to univalent type theory by Escard\'o~and~Knapp~\cite{EscardoKnapp2017,Knapp2018}.  The
lifting $\lift(X)$ of a type $X$ is defined as
${\lift(X) \colonequiv \sum_{P : \Omega} (P \to X)}$, where $\Omega$ is the type
of propositions in the first universe. We think of the elements \((P,\phi)\) of
\(\lift(X)\) as partial elements of \(X\): in case \(P\) holds, we get an
element of \(X\), but \(P\) may also fail to hold and then the partial element
is thought of as undefined. In our constructive model, we interpret the base
type of PCF as the lifting \(\lift(\natt)\) of the natural numbers.

\paragraph*{Section~\ref{sec:PCF}.}
We define a combinatory version of PCF and its (small-step) operational
semantics. We use the propositional truncation to obtain well-behaved relations
in the small-step operational semantics.

\paragraph*{Section~\ref{sec:Scottmodel}.}
We define our constructive Scott model of PCF using the lifting monad.

\paragraph*{Section~\ref{sec:soundnesscompadequacy}.}
We show how the usual proofs of soundness and computational adequacy adapt to
our constructive setting with propositional truncations.

\paragraph*{Section~\ref{sec:charprop}.}
Recall that in our model the PCF type $\iota$ for natural numbers is interpreted
as $\lift(\natt)$, where $\natt$ is the natural numbers type. Thus, if $t$ is a
PCF term of type $\iota$, then we get an element $\densem{t} :
\lift(\natt)$. Hence, for every such term $t$ we have a proposition
$\fst(\densem{t}) : \Omega$. We show that such propositions are all
semidecidable. This result should be contrasted with the fact that a restricted
version of the lifting monad where we take a $\Sigma$-type over only
semidecidable propositions is not adequate for our purposes, as we explain at
the end of Section~\ref{sec:charprop}.

In proving our results, we take the opportunity to record some more general
properties of reflexive transitive closures (Section~\ref{semidec-rel}) and
indexed $\mathsf{W}$-types (Section~\ref{indexedWtypes}).

\paragraph*{Section~\ref{sizematters}.}
We discuss the universe levels involved in our development. This is important,
because we want our results to go through predicatively, i.e.\ without
propositional resizing.

\paragraph*{Section~\ref{conclusion}.}
We summarise our main results and describe directions for future work.

\subsection{Related work}\label{sec:relatedwork}
Partiality in type theory has been the subject of recent study. We briefly
discuss the different approaches.

Firstly, there are the delay monad by Capretta~\cite{Capretta2005} and its quotient
by weak bisimilarity, as studied by Chapman et al.~\cite{ChapmanUustaluVeltri2017}. They used
countable choice to prove that the quotient is again a monad.
Escard\'o and Knapp~\cite{EscardoKnapp2017,Knapp2018} showed that a weak form
of countable choice is indeed necessary to prove this. However, Coquand, Mannaa,
and Ruch \cite[Corollary~2]{CoquandMannaaRuch2017} have shown that countable
choice cannot be proved in dependent type theory with one univalent universe and
propositional truncation. Theorem~3.3 of Coquand's~\cite{Coquand2018} extends this to
dependent type theory with a hierarchy of univalent universes and (some) higher
inductive types. Moreover, Andrew Swan~\cite{Swan2019HoTTEST,Swan2019} recently
showed that even the weak form of choice required is not provable in univalent
type theory.

Another approach is laid out by Altenkirch, Danielsson and Kraus.~\cite{AltenkirchDanielssonKraus2017}. They
postulated the existence of a particular quotient inductive-inductive type
(QIIT) and showed that it satisfies the universal property of the free
$\omega$-cpo with a least element \cite[Theorem~5]{AltenkirchDanielssonKraus2017}. Moreover, Altenkirch et al.\ showed that,
assuming countable choice, their QITT coincides with the quotiented delay monad.

We stress that our approach does not need countable choice or quotient
inductive-inductive types.

Finally, Benton, Kennedy and Varming~\cite{BentonKennedyVarming2009} used
Capretta's delay monad to give a constructive approach to domain theory. Their
approach used setoids, so that every object comes with an equivalence relation
that maps must preserve. One cannot quotient these objects, because quotienting
Capretta's delay monad requires (a weak form of) countable choice, as explained
above. In our development, we instead use Martin-L\"of's identity types as our
notion of equality. Moreover, we do not make use of Coq's impredicative
\texttt{Prop} universe and our treatment incorporates directed complete posets
(dcpos) and not just $\omega$-cpos.

\subsection{Formalisation}
All our results up to and including the proof of computational adequacy (and
except for Section~\ref{sec:constructiveissuespartiality} and
Remark~\ref{liftasfree}) have been formalised in the proof assistant Coq using
the UniMath library \cite{UniMath} and Coq's \texttt{Inductive} types. The
general results from Section~\ref{sec:charprop} have also been formalised, but their
direct applications to PCF, e.g.\ single-valuedness of the operational semantics
and PCF as an indexed $\mathsf{W}$-type, have not. The code may be found at
\url{https://github.com/tomdjong/UniMath/tree/paper}. Instructions for use can
be found in the repository's
\href{https://github.com/tomdjong/UniMath/blob/paper/README.md}{\nolinkurl{README.md}}
file. Browsable documentation for the formalisation may be found at
\url{https://tomdjong.github.io/Scott-PCF-UniMath/toc.html}. Definitions and
proofs of lemmas, propositions and theorems are labelled with their
corresponding identifiers in the Coq name, for example as
\coqident{Partiality}{PCF}{pcf}, which also functions as a hyperlink to the
appropriate definition in the documentation.

At present, it is not possible to verify universe levels in UniMath. Therefore,
to verify the correctness of our development and our claims in
Section~\ref{sizematters} about universe levels in particular, we reformalised
part of our development in Agda using Mart\'in Escard\'o library~\cite{TypeTopology}. Our
code is now part of the library. An HTML rendering may be found at:
\url{https://www.cs.bham.ac.uk/~mhe/agda-new/PCFModules.html}.

\subsection{Acknowledgements}
Firstly, I would like to thank Mart\'in Escard\'o for suggesting and supervising
this project. Secondly, I am grateful to Benedikt Ahrens for his support, his
help with UniMath, and in particular for his feedback on earlier versions of
this paper. I should also like to thank Andrej Bauer and Bernhard Reus for their
comments and questions. Finally, I am indebted to the anonymous referees for
their thorough and valuable reports that helped to improve the paper.

\section{Basic domain theory}\label{sec:basicdomaintheory}
We introduce basic domain theory in the setting of constructive predicative
univalent mathematics. We adapt known definitions (cf.\
\cite[Section~2.1]{AbramskyJung1994} and \cite[Chapter~4]{Streicher2006}) to
constructive univalent type theory, paying special attention to how our
definitions may involve propositional truncations.

\subsection{Directed complete posets}
\begin{definition}[\coqident{Foundations}{Sets}{PartialOrder}]
  \label{def:poset}
  A \emph{poset} $(X,\leq)$ is a set $X$ together with a proposition-valued
  binary relation $\leq : X \to X \to \Prop$ satisfying:
  \begin{enumerate}
  \item \emph{reflexivity}: $\prod_{x : X} x \leq x$;
  \item \emph{antisymmetry}: $\prod_{x,y : X} x \leq y \to y \leq x \to x = y$;
  \item \emph{transitivity}:
    $\prod_{x,y,z : X} x \leq y \to y \leq z \to x \leq z$.
  \end{enumerate}
\end{definition}

\begin{remark}
  Notice that we require $\leq$ to take values in $\Prop$, the type of
  propositions in \(\mathcal{U}_0\), cf.\ Example~\ref{ex:posetuniv}. This
  allows us to prove (using function
  extensionality~\cite[Example~3.6.2]{HoTTbook}) that reflexivity and
  transitivity are propositions, i.e.\ there is at most one witness of
  reflexivity and transitivity. We also express this by saying that reflexivity
  and transitivity are properties, rather than structures. Moreover, we restrict
  to $X$ being a set to ensure that antisymmetry is a property, rather than a
  structure.
\end{remark}

\begin{definition}[\coqident{Foundations}{Sets}{posetmorphism}]
  Let $X$ and $Y$ be posets. A \emph{poset morphism} from $X$ to $Y$ is a
  function between the underlying sets that preserves the order. We also say
  that the function is \emph{monotone}.
\end{definition}

\begin{definition}[\coqident{Algebra}{DCPO}{isdirected}]
  \label{def:isdirected}
  Let $(X,\leq)$ be a poset and $I$ any type. Given a family $u : I \to X$, we
  often write $u_i$ for $u(i)$. Such a family is called \emph{directed} if \(I\)
  is inhabited (i.e.\ $\squash I$ holds) and
  \(
    \prod_{i,j : I} \squash*{\sum_{k : I}{\pa*{u_i \leq u_k}
        \times \pa*{u_j \leq u_k}}}
  \).
\end{definition}

\begin{remark}
  We use the propositional truncation in the definition above to ensure that
  being directed is a property, rather than a
  structure~(\coqident{Algebra}{DCPO}{isaprop_isdirected}).

  Firstly, we express that the type $I$ is inhabited by requiring an element of
  $\squash{I}$. This is different from requiring an element of $I$. It is akin
  to the difference (in set theory) between a set $X$ such that
  $\exists {x \in X}$ holds and a pair $(X,x)$ of a set with a chosen element
  $x \in X$.

  Secondly, if we had used an untruncated $\Sigma$ in the second clause of the
  definition, then we would have asked our poset to be equipped with an
  \emph{operation} mapping pairs $(x,y)$ of elements to some \emph{specified}
  element greater than both $x$ and $y$.
\end{remark}

\begin{definition}[\coqident{Algebra}{DCPO}{isupperbound},
  \coqident{Algebra}{DCPO}{islub},
  \coqident{Algebra}{DCPO}{isdirectedcomplete}]\label{def:directedcomplete}
  An element \(x\) of a poset \(X\) is an \emph{upper bound} of a family
  \(u : I \to X\) if \(u_i \sqsubseteq x\) for every \(i : I\).
  It is a \emph{least upper bound} of \(u\) if it is an upper
  bound and \(x \sqsubseteq y\) holds whenever \(y\) is an upper bound of \(u\).

  A poset \(X\) is called \emph{\(\mathcal U\)-directed complete} for a type
  universe \(\mathcal U\) if every directed family in \(X\) indexed by a type in
  \(\mathcal U\) has a least upper bound in \(X\), which we denote by
  \(\bigsqcup_{i : I} u_i\).
  Symbolically,
  \(
    \prod_{I : \mathcal U}\prod_{u : I \to X} \pa*{u\text{ is directed} \to
    \sum_{x : X} x \text{ is a least upper bound of u}}.
  \)

  We call such a poset a \emph{$\mathcal U$-dcpo}.
  We shall often simply write dcpo, omitting reference to the type universe.
\end{definition}

\begin{remark}
  Contrary to Definition~\ref{def:isdirected}, directed completeness is not
  phrased with a truncated~\(\Sigma\). This justifies having the least upper
  bound operator \(\bigsqcup\). The reason for this definition of directed
  completeness is that least upper bounds are unique when they
  exist~(\coqident{Algebra}{DCPO}{lubsareunique}). Moreover, the type expressing
  that an element is a least upper bound for a family can be shown to be a
  proposition using function
  extensionality~(\coqident{Algebra}{DCPO}{isaprop_islub}). Hence, for any
  family \(u\), the type of least upper bounds of \(u\) and its propositional
  truncation are equivalent.  This observation also tells us, using function
  extensionality again, that the type expressing that a poset is directed
  complete is also a
  proposition~(\coqident{Algebra}{DCPO}{isaprop_isdirectedcomplete}), i.e.\ it
  is a property of the poset.
\end{remark}

\begin{remark}
  In classical mathematics, a dcpo is usually defined as a poset such that every
  directed \emph{subset} has a least upper bound. We have formulated our version
  using \emph{families}, because in our type-theoretic framework functions are
  primitive, unlike in set theory where sets are primitive and functions are
  encoded as particular sets. Another reason for preferring families is that we
  work in the absence of propositional resizing, so that we must pay attention
  to size and therefore only ask for least upper bounds of \emph{small} directed
  subsets. This point is explained and worked out in detail in
  \cite[Section~5]{deJongEscardo2021b} to which we refer the interested
  reader. Here we limit ourselves to saying that working with families is more
  direct, and that for the Scott model we will only need to consider simple
  \(\natt\)-indexed directed families anyway.
\end{remark}

\subsection{Morphisms of dcpos}
\begin{definition}[\coqident{Algebra}{DCPO}{isdcpomorphism}]
  Let $D$ and $E$ be dcpos. A poset morphism from $D$ to $E$ is a \emph{dcpo
    morphism} (or \emph{continuous}) if it preserves least upper bounds of
  directed families. That is, if \(u : I \to D\) is a directed family, then
  \(f\pa*{\bigsqcup_{i : I}u_i}\) is the least upper bound of
  \(f \circ u : I \to E\).
\end{definition}
Thus, by definition, a dcpo morphism is required to be a poset morphism, i.e.\
it must be monotone. However, as is well-known in domain theory, requiring that
the function is monotone is actually redundant, as the following lemma shows.
\begin{lemma}\label{monotoneforfree}
  Let $D$ and $E$ be dcpos. If $f$ is a function \textup{(}on the underlying
  types\textup{)} from $D$ to $E$ preserving least upper bounds of directed
  families, then $f$ is order preserving.
\end{lemma}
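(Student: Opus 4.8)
The plan is to adapt the classical domain-theoretic argument to the setting of families: given $x \leq y$ in $D$, I would exhibit a directed family whose least upper bound is $y$ but which also takes the value $x$, and then use that $f$ preserves suprema to conclude $f(x) \leq f(y)$.

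Concretely, fix $x, y : D$ with $x \leq y$ and define a family $u : \unitt + \unitt \to D$, where the index type is small enough to lie in the universe for which $D$ is directed complete, by $u(\mathsf{inl}\,\star) \coloneqq x$ and $u(\mathsf{inr}\,\star) \coloneqq y$. First I would check that $u$ is directed: its index type is inhabited (witnessed by $\mathsf{inl}\,\star$), and for any $i, j : \unitt + \unitt$ we may take $k \coloneqq \mathsf{inr}\,\star$, since then $u_i \leq u_k$ and $u_j \leq u_k$ both hold — by a four-way case analysis on $i$ and $j$, using reflexivity of $\leq$ together with the hypothesis $x \leq y$.

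Next I would observe that $y$ is a least upper bound of $u$: it is an upper bound by the same case analysis, and it is the least one, because any upper bound $z$ of $u$ in particular satisfies $y = u(\mathsf{inr}\,\star) \leq z$. By antisymmetry, least upper bounds are unique, so $\bigsqcup_i u_i = y$. Finally, since $f$ preserves least upper bounds of directed families, $f(y) = f(\bigsqcup_i u_i)$ is a least upper bound — in particular an upper bound — of the family $i \mapsto f(u_i)$, so instantiating at $i \coloneqq \mathsf{inl}\,\star$ gives $f(x) = f(u(\mathsf{inl}\,\star)) \leq f(y)$, as desired.

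I do not expect a serious obstacle here. The only points calling for a little care are that the two-element index type $\unitt + \unitt$ must lie in the universe with respect to which $D$ is directed complete (harmless, given the closure properties assumed of our universes), and that, since $f$ is not assumed monotone, ``$f$ preserves least upper bounds of directed families'' must be understood as ``$f(\bigsqcup_i u_i)$ is a least upper bound of $i \mapsto f(u_i)$'' — not as the equation $f(\bigsqcup_i u_i) = \bigsqcup_i f(u_i)$, whose right-hand side would presuppose that $i \mapsto f(u_i)$ is already directed.
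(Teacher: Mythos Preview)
Your proof is correct and follows exactly the same approach as the paper: both use the two-element family $\unitt + \unitt \to D$ sending $\mathsf{inl}(\star)$ to $x$ and $\mathsf{inr}(\star)$ to $y$, observe that it is directed with supremum $y$, and conclude $f(x) \leq f(y)$ from preservation of that supremum. Your version simply spells out in more detail what the paper summarises as ``easily seen'', and your closing remarks about universe levels and the precise reading of ``preserves least upper bounds'' are apt.
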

\begin{proof}[\coqidentproof{Algebra}{DCPO}{preservesdirectedlub_isdcpomorphism}]
  Let $f : D \to E$ be a morphism of dcpos and suppose $x,y : D$ with
  $x \leq y$. Consider the family $\unitt + \unitt \to D$ defined as
  $\inl(\star) \mapsto x$ and $\inr(\star) \mapsto y$. This family is easily
  seen to be directed and its least upper bound is $y$. Now $f$ preserves this
  least upper bound, so $f(x) \leq f(y)$.
\end{proof}
\begin{lemma}
  Every morphism of dcpos preserves directed families. That is, if $f : D \to E$
  is a morphism of dcpos and $u$ is a directed family in $D$, then $f \circ u$
  is a directed family in $E$.
\end{lemma}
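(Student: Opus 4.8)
The plan is to verify directly that $f \circ u$ satisfies the two clauses of \cref{def:isdirected}: inhabitedness of the index type, and the truncated upper-bound condition. Since $u : I \to D$ is directed, we already have $\squash{I}$, and this serves unchanged as the witness that $f \circ u$ is inhabited (the index type is the same $I$).

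For the second clause, fix $i, j : I$; we must produce a term of $\squash*{\sum_{k : I}\pa*{f(u_i) \leq f(u_k) \times f(u_j) \leq f(u_k)}}$. This is a proposition, so by the universal property of propositional truncation we may assume given an actual element $\sum_{k : I}\pa*{u_i \leq u_k \times u_j \leq u_k}$ coming from directedness of $u$. From such a $k$ together with the inequalities $u_i \leq u_k$ and $u_j \leq u_k$, we apply monotonicity of $f$ — available via \cref{monotoneforfree}, since $f$ preserves directed suprema — to obtain $f(u_i) \leq f(u_k)$ and $f(u_j) \leq f(u_k)$. Thus the same index $k$ works, and we conclude by re-applying $\tosquash{-}$.

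I do not expect any genuine obstacle here: the proof is essentially a transport of the directedness data of $u$ along $f$, using only that $f$ is monotone. The one point requiring a little care is the bookkeeping with propositional truncation — we cannot extract an element of $\sum_{k:I}(\dots)$ from $\squash{\dots}$ unless the goal is itself a proposition, but it is, being a truncation. So the structure is: peel off the truncation in the hypothesis, do the monotone-map argument pointwise, and re-truncate. If anything deserves emphasis it is the invocation of \cref{monotoneforfree} to supply monotonicity from the mere preservation of directed suprema, which is why the hypothesis of this lemma need only say "morphism of dcpos" rather than additionally assuming monotonicity.
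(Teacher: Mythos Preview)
Your proposal is correct and matches the paper's approach: the paper simply records that the result ``follows straightforwardly from \cref{monotoneforfree}'', and what you have written is exactly the unpacking of that remark, handling the propositional truncations with the appropriate care.
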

\begin{proof}[\coqidentproof{Algebra}{DCPO}{dcpomorphism_preservesdirected}]
  Using monotonicity of \(f\).
\end{proof}

\begin{therm}
  \label{dcpoofdcpomor}
  Let $D$ and $E$ be dcpos. The morphisms from $D$ to $E$ form a dcpo with the
  pointwise order.
\end{therm}
\begin{proof}[\coqidentproof{Algebra}{DCPO}{dcpoofdcpomorphisms}]
  The least upper bound of a directed family of dcpo morphisms is also given
  pointwise. The proof only differs from the standard proof of
  \cite[Theorem~4.2]{Streicher2006} in that it uses directed families, rather
  than subsets. One may consult the formalisation for the technical details.
\end{proof}

\subsection{Dcpos with \texorpdfstring{$\bot$}{bottom}}
\begin{definition}[\coqident{Algebra}{DCPO}{dcpowithbottom}]
  \label{def:dcpowithbottom}
  A \emph{dcpo with $\bot$} is a dcpo $D$ together with a least element in $D$.
\end{definition}

\begin{therm}
  \label{dcpowithbot_ofdcpomor}
  Let $D$ be a dcpo and let $E$ be a dcpo with $\bot$. Ordered pointwise, the
  morphisms from $D$ to $E$ form a dcpo with $\bot$, which we denote by $E^D$.
\end{therm}
\begin{proof}[\coqidentproof{Algebra}{DCPO}{dcpowithbottom_ofdcpomorphisms}]
  Since the order is pointwise, the least morphism from $D$ to $E$ is simply
  given by mapping every element in $D$ to the least element in $E$. The rest is
  as in Theorem~\ref{dcpoofdcpomor}.
\end{proof}

Dcpos with bottom elements are interesting because they admit least fixed
points. Moreover, these least fixed points are themselves given by a continuous
function.
\begin{therm}
  \label{leastfixedpoint}
  Let $D$ be a dcpo with $\bot$. There is a continuous function
  $\mu : D^D \to D$ that sends each continuous function to its least fixed
  point. In fact, $\mu$ satisfies:
  \begin{enumerate}
  \item $f(\mu(f)) = \mu(f)$ for every continuous $f : D \to D$;
  \item for every continuous $f : D \to D$ and each $d : D$, if $f(d) \leq d$,
    then $\mu(f) \leq d$.
  \end{enumerate}
\end{therm}
\begin{proof}
  [Proof. \textnormal{(\coqident{Algebra}{DCPO}{leastfixedpoint_isfixedpoint},
  \coqident{Algebra}{DCPO}{leastfixedpoint_isleast})}]
  We have formalised the proof of \cite[Theorem 2.1.19]{AbramskyJung1994}. We
  sketch the main construction here. For each natural number $n$, define
  $\mathsf{iter}(n) : D^D \to D$ as
  \[
    \mathsf{iter}(n)(f) \colonequiv f^n(\bot) \colonequiv
    \underbrace{f(f(\dots(f}_{n\text{ times}}(\bot))\dots)).
  \]
  By induction on $n$, one may show that every $\mathsf{iter}(n)$ is
  continuous. Then, the assignment $n~\mapsto~{\mathsf{iter}(n)}$ is a directed
  family in $D^{\pa*{D^D}}$. Finally, one defines $\mu$ as the least upper bound
  of this directed family. Recall that least upper bounds in the exponential are
  given pointwise, so that $\mu(f) = \bigsqcup_{n : \natt} f^n(\bot)$.
\end{proof}

\section{Constructive issues with partiality}
\label{sec:constructiveissuespartiality}
In classical mathematics, a partial map from $\mathbb N$ to $\mathbb N$ can
simply be seen as a total map from $\mathbb N$ to $\mathbb N \cup \{\bot\}$,
where $\bot$ is some fresh element not in $\mathbb N$. The \emph{flat dcpo
  $\mathbb N_\bot$} is $\mathbb N \cup \{\bot\}$ ordered as in the following
Hasse diagram:
\[
  \begin{tikzcd}
    0 \arrow[drr, no head] & 1 \ar[dr, no head] & 2 \ar[d, no head]
    & 3 \ar[dl, no head] & \hspace{-1em}\cdots \ar[dll, no head] \\
    & & \bot
  \end{tikzcd}
\]
Using excluded middle, a directed subset of \(\mathbb N_\bot\) is either
\(\{\bot\}\), \(\{n\}\) or \(\{\bot,n\}\) (with \(n\) a natural number). The
least upper bounds of which are easily computed as \(\bot\), \(n\) and \(n\),
respectively. Thus, with excluded middle, \(\mathbb N_\bot\) is directed
complete.

One could hope that the above translates directly into constructive univalent
mathematics, that is, that the poset
$\natt_{\bot} \colonequiv \pa*{\natt+\unitt,\leq_{\bot}}$ with $\leq_{\bot}$ the
flat order (i.e.\ $\inr(\star)$ is the least element and all other elements are
incomparable) is ($\mathcal{U}_0$-)directed complete (in the sense of
Definition~\ref{def:directedcomplete}). However, we can prove that this implies
Bishop's Limited Principle of Omniscience (LPO), a constructive taboo (recall
the final paragraph of Section~\ref{background:models}), as follows.

Write \(\twot\) for the type \(\unitt + \unitt\), and \(0\) and \(1\) for its
inhabitants \(\inl(\star)\) and \(\inr(\star)\), respectively. In~type~theory,
LPO may be formulated\footnote{This formulation does not ensure that the type is
  a proposition, so one could also consider truncating the $\Sigma$ or asking
  for the \emph{least} $k$ such that $\alpha(k) = 1$. But this version is
  sufficient for our purposes, and logically equivalent to the one with the
  truncated $\Sigma$.} as the following type:
\begin{equation}\label{LPO}
  \prod_{\alpha : \natt \to \twot} \pa*{\prod_{n : \natt} \alpha(n) = 0} +
  \pa*{\sum_{k : \natt} \alpha(k) = 1}.
  \tag{LPO}
\end{equation}

\begin{lemma}
  Directed completeness of $\natt_{\bot}$ implies \textup{LPO}.
\end{lemma}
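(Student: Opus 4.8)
The plan is to encode a given $\alpha : \natt \to \twot$ as a directed family in $\natt_\bot$ and read off the answer from its least upper bound. First note that $\natt_\bot = (\natt + \unitt, \leq_{\bot})$ is a poset: $\natt + \unitt$ is a set and the flat order is proposition-valued and satisfies reflexivity, antisymmetry and transitivity. Given $\alpha$, define $u : \natt \to \natt_\bot$ by bounded search: set $u_n \colonequiv \inl(k)$ if there is a least $k \leq n$ with $\alpha(k) = 1$, and $u_n \colonequiv \inr(\star)$ (the bottom element) otherwise. This is a genuine function, since the predicate $\lambda k.\,(\alpha(k) = 1)$ is decidable and we search only the finite range $\{0,\dots,n\}$. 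The point of this definition is that it is monotone in $n$: once an index with value $1$ has been found, the least such index does not change as $n$ grows, so $u_n \leq_{\bot} u_m$ whenever $n \leq m$. Hence $u$ is directed: it is indexed by the inhabited type $\natt$, and for any $i,j : \natt$ the element $u_{\max(i,j)}$ dominates both $u_i$ and $u_j$.

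By the assumed directed completeness, $s \colonequiv \bigsqcup_{n : \natt} u_n$ exists in $\natt_\bot$. I would then decide LPO by case analysis on $s$ as an element of $\natt + \unitt$. If $s = \inr(\star)$, then $\prod_{n : \natt} \alpha(n) = 0$: for if $\alpha(k) = 1$ held for some $k$, then $u_k = \inl(k')$ for some $k'$, yet $u_k \leq s = \inr(\star)$ forces $u_k = \inr(\star)$ by the definition of the flat order, contradicting that $\inl$ and $\inr$ land in distinct summands. If instead $s = \inl(m)$ for some $m : \natt$, then $\sum_{k : \natt} \alpha(k) = 1$: every $u_n$ satisfies $u_n \leq \inl(m)$, so $u_n$ is $\inr(\star)$ or $\inl(m)$; were every $u_n$ equal to $\inr(\star)$, then $\inr(\star)$ would be an upper bound of the family and so $s = \inr(\star) \neq \inl(m)$, a contradiction. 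Thus some $u_n$ equals $\inl(m)$, and by the definition of $u$ this forces $\alpha(m) = 1$, providing the required witness.

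The main obstacle is the witness-extraction step in the case $s = \inl(m)$: one must convert the equation $\bigsqcup_n u_n = \inl(m)$ into an honest natural number $k$ together with a proof of $\alpha(k) = 1$. This works precisely because $s$ is a maximal element of the flat order (everything below $\inl(m)$ is $\bot$ or $\inl(m)$ itself), so a directed family can only reach it by actually attaining it --- and it is exactly this ability to compute a witness from a supremum that makes directed completeness of $\natt_\bot$ a constructive taboo. A minor point needing care is checking that $u$ is genuinely a well-defined function (the bounded decidable search) and that least upper bounds are unique (a consequence of antisymmetry), both routine.
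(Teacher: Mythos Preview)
Your construction of the directed family $u$ and the overall case split on the supremum are exactly the paper's argument (the paper calls the family $\beta$ but defines it identically, by bounded search for the least witness). The case $s = \inr(\star)$ is handled correctly.

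There is, however, a genuine constructive gap in your treatment of the case $s = \inl(m)$. From ``not every $u_n$ equals $\inr(\star)$'' you conclude ``some $u_n$ equals $\inl(m)$'', and only then read off $\alpha(m)=1$. That inference is an instance of $\neg\forall \Rightarrow \exists$ over $\natt$ for a decidable predicate, i.e.\ Markov's principle, which is not available in the ambient type theory. You correctly flag witness extraction as ``the main obstacle'', but the informal remark that a maximal element must be attained by the family does not discharge it.

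The repair is short and avoids the detour entirely. Given $s = \inl(m)$ you already hold the concrete natural number $m$, so test $\alpha(m)$ directly. If $\alpha(m)=1$, that is your witness. If $\alpha(m)=0$, then for every $n$ one has $u_n \neq \inl(m)$ (because $u_n = \inl(j)$ forces $\alpha(j)=1$ by definition of $u$), and together with $u_n \leq_\bot \inl(m)$ this yields $u_n = \inr(\star)$ for all $n$; hence $\inr(\star)$ is an upper bound and $s \leq_\bot \inr(\star)$, contradicting $s = \inl(m)$. This is a contradiction derived from a decidable hypothesis, not a $\neg\forall\to\exists$ step. The paper's proof is equally terse at this point (it simply asserts that $s = \inl(k)$ ``implies $\sum_{k:\natt}\alpha(k)=1$''), but this is the intended reading.
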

\begin{proof}
  Suppose that $\natt_{\bot}$ is ($\mathcal{U}_0$-)directed complete. Let
  $\alpha : \natt \to \twot$ be an arbitrary binary sequence. Define the family
  $\beta : \natt \to \natt_{\bot}$ as
  \[
    \beta(n) \colonequiv
    \begin{cases}
      \inl(k) &\text{if $k$ is the least integer $\leq n$ such that $\alpha(k) =
        1$;} \\
      \inr(\star) &\text{else}.
    \end{cases}
  \]
  Then $\beta$ is directed, so by assumption, it has a supremum $s$ in
  $\natt_{\bot}$. By the induction principle of sum-types, we can decide whether
  $s = \inl(k)$ for some $k : \natt$ or $s = \inr(\star)$. The former implies
  $\sum_{k : \natt} \alpha(k) = 1$ and we claim that the latter implies
  $\prod_{n : \natt} \alpha(n) = 0$. For suppose that \(s = \inr(\star)\) and
  let \(n : \natt\). Since \(\twot\) has decidable equality, it suffices to show
  that \(\alpha(n) \neq 1\). Assume for a contradiction that \(\alpha(n) =
  1\). Then \(\beta(n) = \inl(k)\) for some natural number \(k \leq n\). Using
  that \(s\) is the supremum of \(\beta\) yields:
  \(\inl(k) = \beta(n) \leq_\bot s = \inr(\star)\). By definition of the order
  we also have the reverse inequality \(\inr(\star) \leq_\bot \inl(k)\). Hence,
  \(\inr(\star) = \inl(k)\) by antisymmetry, which is a contradiction, so
  \(\alpha(n) \neq 1\) as desired.
\end{proof}

\section{Partiality, constructively}\label{sec:lifting}
In this section we present the lifting monad as a solution to the problem
described in the previous section. Using the lifting monad in univalent type
theory to deal with partiality originates with the work of
Escard\'o~and~Knapp~\cite{EscardoKnapp2017,Knapp2018} and aims to avoid
countable choice.

We start by defining the lifting of a type and by characterising its identity
type. In Section~\ref{subsec:liftingmonad} we prove that the lifting carries a
monad structure, while in Section~\ref{subsec:liftingdcpo} we show that the
lifting of a set is a dcpo with $\bot$.  Most~of the definitions and some of the
results in this section can be found in~\cite{Knapp2018} or
in~\cite{EscardoKnapp2017}. Exceptions are Lemma~\ref{lifteqchar},
Theorem~\ref{liftofsetisaset} and Theorem~\ref{Kleisliextensioncont}. We note
that our characterisation of equality of the lifting, Lemma~\ref{lifteqchar}, is
implicit in the fact that the order of~\cite{EscardoKnapp2017} is
antisymmetric. The order on the lifting in this paper (see
Theorem~\ref{liftofsetisdcpo}) is different from the order presented in
\cite{EscardoKnapp2017,Knapp2018}. The two orders are equivalent, however, as
observed by in~\cite[\texttt{LiftingUnivalentPrecategory}]{TypeTopology}. We
found the order in this paper to be more convenient.

\begin{definition}[\coqident{Partiality}{PartialElements}{lift}]
  Let $X$ be any type. Define the \emph{lifting of X} as
  \[
    \lift(X) \colonequiv \sum_{P : \Prop} (P \to X).
  \]
  Strictly speaking, we should have written $\fst(P) \to X$, because
  elements of $\Omega$ are pairs of types and witnesses that these types are
  subsingletons. We will almost always suppress reference to these witnesses in
  this paper.
\end{definition}

\begin{definition}[\coqident{Partiality}{PartialElements}{liftorder_least}]
  \label{lift_least}
  For any type $X$, the type $\lift(X)$ has a distinguished element
  \[
    \bot_X \colonequiv \pa*{\emptyt,\textup{\textsf{from-}}\emptyt_X} :
    \lift(X),
  \]
  where $\textup{\textsf{from-}}\emptyt_X$ is the unique function from $\emptyt$
  to $X$.
\end{definition}

\begin{definition}[\coqident{Partiality}{PartialElements}{lift_embedding}]
  \label{lift_embedding}
  There is a canonical map $ \eta_X \colon X \to \lift(X) $ defined by
  \[
    \eta_X(x) \colonequiv (\unitt,\lambda t.x).
  \]
\end{definition}
Assuming LEM (i.e.\ $\prod_{P : \Omega} (P + \lnot P)$), we can prove that the
only propositions are \(\emptyt\) and \(\unitt\), for if a proposition \(P\)
holds, then it is equal (by propositional extensionality) to \(\unitt\) and if
it does not hold, then it is equal to \(\emptyt\). Hence, if we
assume LEM then the two definitions above capture all of the lifting, since LEM
implies:
\[
  \lift(X) \equiv \pa*{\sum_{P : \Omega}(P \to X)}
  \simeq \pa*{\pa*{\unitt \to X} + \pa*{\emptyt \to X}}
  \simeq \pa*{X + \unitt},
\]
as \(\pa*{\unitt \to X} \simeq X\) and there is a unique function from
\(\emptyt\) to any type \(X\).
Constructively, things are more interesting, of course.

We proceed by defining meaningful projections.
\begin{definition}
  [\coqident{Partiality}{PartialElements}{isdefined},
  \coqident{Partiality}{PartialElements}{value}] We take
  $\isdefined : \lift(X) \to \Prop$ to be the first projection. The function
  $\liftvalue : \prod_{l : \lift (X)} \isdefined (l) \to X$ is given by:
  $\liftvalue (P,\phi) (p) \colonequiv \phi (p)$.
\end{definition}

Since equality of $\Sigma$-types often requires $\transport$, it will be
convenient to characterise the equality of $\lift(X)$.
\begin{lemma}
  \label{lifteqchar}
  Let $X$ be any type and let $l, m : \lift(X)$. The following are logically
  equivalent\footnote{In fact, there is a type equivalence. One can prove this
    using univalence and a generalised structure identity principle, cf.\
    \cite[\texttt{LiftingIdentityViaSIP}]{TypeTopology}.}
  \begin{enumerate}
  \item $l = m$\textup{;}
  \item\label{lifteqchar-2}
    $\sum_{e : \isdefined(l) \leftrightarrow \isdefined(m)} \liftvalue(l) \circ
    \snd(e) \sim \liftvalue(m)$.
  \end{enumerate}
\end{lemma}
\begin{proof}
  [Proof. \textnormal{(\coqident{Partiality}{PartialElements}{lifteq_necc},
    \coqident{Partiality}{PartialElements}{lifteq_suff})}]

  First of all, the characterisation of the identity type of
  \(\Sigma\)-types~\cite[Theorem~2.7.2]{HoTTbook} yields:
  \begin{equation*}\label{id-type-of-sum-type}
    \tag{\(\dagger\)}
    \pa*{l = m} \simeq \sum_{e' : \isdefined(l) = \isdefined(m)}
    \transport(e',\liftvalue(l))=\liftvalue(m).
  \end{equation*}
  Thus we only have to show that the right-hand side of
  \eqref{id-type-of-sum-type} is logically equivalent to \eqref{lifteqchar-2} in the lemma.
  Suppose first that we have \(e' : \isdefined(l) = \isdefined(m)\) and an
  equality \break \({p : \transport(e',\liftvalue(l)) = \liftvalue(m)}\). Then
  \[
    e \colonequiv \mathsf{eqtoiff}(e') : {\isdefined(l) \leftrightarrow
      \isdefined(m)}.
  \]
  Using path induction on \(e'\), we can prove that
  $\liftvalue(l) \circ \snd(e) =
  \transport(e',\liftvalue(l))$. Together with $p$, this equality
  implies
  $\liftvalue(l) \circ \snd(e) \sim \liftvalue(m)$,
  as desired.

  Conversely, suppose $e : \isdefined (l) \leftrightarrow \isdefined (m)$ and
  $v : \liftvalue (l) \circ \snd(e) \sim \liftvalue(m)$. By
  propositional extensionality, we obtain $e' : \isdefined(l) = \isdefined(m)$
  from $e$. From $e'$ we can get an equivalence
  $\mathsf{idtoeqv}(e') : \isdefined (l) \simeq \isdefined(m)$.  Furthermore,
  using path induction on $e'$, one can prove that
  \begin{equation*}\label{transporteq}
    \transport(e',\liftvalue(l)) = \liftvalue(l) \circ
    (\mathsf{idtoeqv}(e'))^{-1}.\tag{$\ast$}
  \end{equation*}
  Hence, it suffices to show that the right-hand side of~\eqref{transporteq} is
  equal to \(\liftvalue(m)\). The homotopy \(v\) yields
  \(\liftvalue(l) \circ \snd(e) = \liftvalue(m)\) by function
  extensionality, so it suffices to prove that
  \((\mathsf{idtoeqv}(e'))^{-1} = \snd(e)\). But these are both
  functions with codomain \(\isdefined(l)\), which is a proposition, so they are
  equal by function extensionality.
\end{proof}

\subsection{The lifting monad}\label{subsec:liftingmonad}
In this section we prove that the lifting carries a monad structure.

This monad structure is most easily described as a Kleisli triple. The unit is
given by Definition~\ref{lift_embedding}.
\begin{definition}[\coqident{Partiality}{LiftMonad}{Kleisli_extension}]
  Given $f : X \to \lift (Y)$, the \emph{Kleisli extension}
  $f^\# : \lift(X) \to \lift(Y)$ is defined by:
  \[
    f^\#(P,\phi) \colonequiv \Bigg(\sum_{p : P} \isdefined (f(\phi(p))),
    \psi\Bigg),
  \]
  where $\psi(p,d) \colonequiv \liftvalue (f(\phi(p)))(d)$.
\end{definition}

\begin{therm}[Theorem 5.8 in \cite{Knapp2018}, Section 2.2 in \cite{EscardoKnapp2017}]
  \label{lifting-is-monad}
  The above constructions yield a monad structure on $\lift(X)$, i.e.\ the
  Kleisli laws hold \textup{(}pointwise\textup{)}:
  \begin{enumerate}
  \item $\pa*{\eta_X}^\# \sim \id_{\lift(X)}$;
  \item $f^\# \circ \eta_X \sim f$ for any $f : X \to \lift (Y)$;
  \item\label{lifting-is-monad-3}
    $g^\# \circ f^\# \sim (g^\# \circ f)^\#$ for any $f : X \to \lift(Y)$
    and $g : Y \to \lift(Z)$.
  \end{enumerate}
\end{therm}
\begin{proof}
  [Proof. \textnormal{(\coqident{Partiality}{LiftMonad}{eta_extension},
  \coqident{Partiality}{LiftMonad}{fun_extension_after_eta},
  \coqident{Partiality}{LiftMonad}{extension_comp})}]
  The proofs are straightforward thanks to Lemma~\ref{lifteqchar}.
  Item~\eqref{lifting-is-monad-3} is essentially the
  associativity of $\Sigma$, i.e.\ equivalence between
  $\sum_{a : A} \sum_{b : B(a)} C(a,b)$ and
  $\sum_{(a,b) : \sum_{a : A} B(a)} C(a,b)$.
\end{proof}

\subsection{The lifting as a dcpo with \texorpdfstring{$\bot$}{bottom}}
\label{subsec:liftingdcpo}
The goal of this section is to endow $\lift(X)$ with a partial order that makes
it into a dcpo~with~$\bot$, provided that $X$ is a set. We also show that the
Kleisli extension from the previous section is continuous when regarded as a
morphism between dcpos~with~$\bot$.

\begin{therm}
  \label{liftofsetisaset}
  If $X$ is a set, then so is its lifting $\lift(X)$.
\end{therm}
\begin{proof}[\coqidentproof{Partiality}{PartialElements}{liftofhset_isaset}]
  As in the proof of Lemma~\ref{lifteqchar}, we have:
  \[
    l = m \simeq \sum_{e : \isdefined(l) = \isdefined(m)}
    \transport(e,\liftvalue (l)) = \liftvalue (m).
  \]
  Since $X$ is a set, the type
  $\transport(e,\liftvalue(l)) = \liftvalue(m)$ is a proposition. So, if
  we can prove that $\isdefined(l) = \isdefined(m)$ is a proposition, then the
  right hand side is a proposition indexed sum of propositions, which is again a
  proposition.

  So let us prove that if $P$ and $Q$ are propositions, then so is $P = Q$. At
  first glance, it might seem like one needs univalence (for propositions) to
  prove this, but in fact propositional extensionality suffices. By
  \cite[Lemma~3.11]{Krausetal2017} (applied to the type of propositions), it
  suffices to give for every proposition \(R\), a (weakly) constant (i.e.\ any
  two of its values are equal) endomap on $P = R$. But the composition
  \[
    (P = R) \to (P \leftrightarrow R) \xrightarrow{\text{PropExt}} (P = R)
  \]
  is weakly constant, because $P \leftrightarrow R$ is a proposition, so this
  finishes the proof.
\end{proof}

\begin{therm}[cf.\ Theorem 5.14 in \cite{Knapp2018} and Theorem 1 in
  \cite{EscardoKnapp2017}]
  \label{liftofsetisdcpo}
  If $X$ is a set, then $\lift(X)$ is a dcpo with $\bot$ with the following
  order:
  \[
    l \sqsubseteq m \colonequiv \isdefined(l) \to l = m.
  \]
\end{therm}
\begin{proof}
  [\coqidentproof{Partiality}{PartialElements}{liftdcpowithbottom}] First of
  all, we should prove that $\lift(X)$ is a poset with the specified order. In
  particular, $\sqsubseteq$ should be proposition-valued. If $X$ is a set, then
  $\isdefined(l) \to l = m$ is a function type into a proposition and therefore
  a proposition itself.

  Reflexivity and transitivity of $\sqsubseteq$ are easily verified. Moreover,
  $\sqsubseteq$ is seen to be antisymmetric using Lemma~\ref{lifteqchar}.

  The $\bot$ element of $\lift(X)$ is given by $\bot_X$ from
  Definition~\ref{lift_least}.

  The construction of the least upper bound of a directed family is the most
  challenging part of the proof. Let \(u : I \to \lift(X)\) be a directed family
  in \(\lift(X)\). Consider the diagram (of solid arrows):

  \begin{center}
    \begin{tikzcd}
      \sum_{i : I}\isdefined(u_i)
      \ar[rr,"\phi : {(i,d)} \mapsto \liftvalue(u_i)(d)"]
      \ar[dr,"\tosquash{-}"']
      & & X \\
      & \squash*{\sum_{i : I}\isdefined(u_i)}
      \ar[ur, dashed, "\psi"']
    \end{tikzcd}
  \end{center}
  We are going to construct the dashed map \(\psi\) that makes the diagram
  commute and define the least upper bound of \(u\) as:
  \(\pa*{\squash*{\sum_{i : I}\isdefined(u_i)},\psi}\). Truncating the type is
  necessary, as \(\sum_{i : I}\isdefined(u_i)\) may have more than one element
  if \(I\) is not a proposition. The difficulty lies in the fact that the
  universal property of the truncation only tells us how to define maps into
  \emph{propositions}. But \(X\) is a \emph{set}. We solve this problem using
  \cite[Theorem~5.4]{Krausetal2017}, which says that every weakly constant
  function \(f : A \to B\) to a set \(B\) factors through \(\squash*{A}\). That
  \(f\) is weakly constant means that \(f(a) = f(a')\) for every \(a,a' :
  A\). So, to construct \(\psi\), we only need to prove that the top map
  \(\phi\) in the diagram is weakly constant. Let \((i,d_i) , (j,d_j)\) be two
  elements of the domain of \(\phi\). We are to prove that
  \(\liftvalue(u_i)(d_i) = \liftvalue(u_j)(d_j)\). As \(X\) is a set, this is a
  proposition. Therefore, using that \(u\) is directed, we obtain \(k : I\) with
  \(u_i,u_j \sqsubseteq u_k\). But \(d_i : \isdefined(u_i)\) and
  \(d_j : \isdefined(u_j)\), so \(u_i = u_k = u_j\) by definition of the
  order. Hence,
  \(\phi(i,d_i) = \liftvalue(u_i)(d_i) = \liftvalue(u_j)(d_j) = \phi(j,d_j)\),
  as we wished to show.
\end{proof}

\begin{therm}
  \label{Kleisliextensioncont}
  Let $X$ and $Y$ be sets and $f : X \to \lift(Y)$ any function. The Kleisli
  extension $f^\# : \lift(X) \to \lift(Y)$ is a morphism of dcpos.
\end{therm}
\begin{proof}[\coqidentproof{Partiality}{LiftMonad}{Kleisli_extension_dcpo}]
  Let $v$ be the least upper bound of a directed family $u : I \to \lift(X)$ in
  $\lift(X)$. Proving that $f^\#$ is monotone is quite easy. By monotonicity,
  $f^\#(v)$ is an upper bound for the family $f^\# \circ u$. We are left to
  prove that it is the least. Suppose that $l : \lift(Y)$ is another upper bound
  for the family $f^\#\circ u$, i.e.\ $l \sqsupseteq f^\#(u_i)$ for every
  $i : I$. We must show that $f^\#(v) \sqsubseteq l$. To this end, assume we
  have $q : \isdefined(f^\#(v))$. We must prove that $f^\#(v) = l$.

  From $q$, we obtain $p : \isdefined(v)$ by definition of $f^\#$. By our
  construction of suprema in $\lift(X)$ and the fact that $f^\#(v) = l$ is a
  proposition, we may in fact assume that we have an element $i : I$ and
  $d_i : \isdefined(u_i)$. But $l \sqsupseteq f^\#(u_i)$, so using $d_i$, we get
  the equality $l = f^\#(u_i)$. Since $v$ is an upper bound for $u$, the term
  $d_i$ also yields $u_i = v$. In particular, $l = f^\#(u_i) = f^\#(v)$, as
  desired.
\end{proof}

\begin{remark}[\coqident{Partiality}{LiftMonad}{liftfunctor_eq}]
  \label{liftfunctoreq}
  Finally, one could define the functor $\lift$ from the Kleisli extension and
  unit by putting $\lift(f) \colonequiv (\eta_Y \circ f)^\#$ for any
  $f : X \to Y$. However, it is equivalent and easier to directly define
  $\lift(f)$ by postcomposition:
  $\lift(f)(P,\phi) \colonequiv (P,f \circ \phi)$.
\end{remark}

\begin{remark}\label{liftasfree}
  We remark that lifting may be regarded as a free construction, in more than
  one way in fact. This result should be compared to
  \cite[Theorem~5]{AltenkirchDanielssonKraus2017}, where Altenkirch et al.\
  exhibit their QIIT as the free $\omega$-cpo with a least element (cf.\
  Section~\ref{sec:relatedwork}).

  By~\cite[Theorems 21 and 23]{deJongEscardo2021a}, the lifting of a set \(X\)
  can be regarded both as the free pointed dcpo on \(X\) and as the free
  subsingleton complete poset on \(X\). In our predicative setting, some care
  should be taken in formulating these statements. We do not go into the details
  here and instead refer the interested reader to~\cite{deJongEscardo2021a}.
\end{remark}

\section{PCF and its operational semantics}\label{sec:PCF}
This section formally defines the types and terms of PCF as well as the
small-step operational semantics. It should be regarded as a formal counterpart
to the informal introduction to PCF in Section~\ref{background:PCF}.

To avoid dealing with free and bound variables (in the formalisation), we opt to
work in the combinatory version of PCF, as originally presented
by Scott~\cite{Scott1993}. We note that it is possible to represent every closed
$\lambda$-term in terms of combinators by a well-known
technique~\cite[Section~2C]{HindleySeldin2008}.

We inductively define combinatory PCF as follows.
\begin{definition}[\coqident{Partiality}{PCF}{type}]
  The \emph{PCF types} are inductively defined as:
  \begin{enumerate}
  \item $\iota$ is a type, the \emph{base type};
  \item for every two types $\sigma$ and $\tau$, there is a \emph{function type}
    $\sigma \Rightarrow \tau$.
  \end{enumerate}
  As usual, $\Rightarrow$ will be right associative, so we write
  $\sigma \Rightarrow \tau \Rightarrow \rho$ for
  $\sigma \Rightarrow (\tau \Rightarrow \rho)$.
\end{definition}
\begin{definition}[\coqident{Partiality}{PCF}{term}]
  \label{def:PCFterms}
  The \emph{PCF terms of PCF type $\sigma$} are inductively generated by:
  \begin{center}
    \AxiomC{\phantom{$t : \iota$}}
    \UnaryInfC{${\zeroo} \text{ of type } \iota$}
    \DisplayProof \quad
    \AxiomC{\phantom{$t : \iota$}}
    \UnaryInfC{${\succc} \text{ of type } \iota \Rightarrow \iota$}
    \DisplayProof\vspace{0.5cm}\\
    \AxiomC{\phantom{$t : \iota$}}
    \UnaryInfC{$\pred \text{ of type } \iota \Rightarrow \iota$}
    \DisplayProof \quad
    \AxiomC{\phantom{$t : \iota$}}
    \UnaryInfC{${\ifz} \text{ of type } {\iota \Rightarrow \iota \Rightarrow \iota
        \Rightarrow \iota}$}
    \DisplayProof \vspace{0.5cm}\\
    \AxiomC{\phantom{$t : \iota$}}
    \UnaryInfC{${\PCFk_{\sigma,\tau}} \text{ of type } \sigma \Rightarrow \tau \Rightarrow
      \sigma$}
    \DisplayProof \quad \AxiomC{\phantom{$t : \iota$}}
    \UnaryInfC{${\PCFs_{\sigma,\tau,\rho}} \text{ of type } (\sigma \Rightarrow \tau
      \Rightarrow \rho) \Rightarrow \break (\sigma \Rightarrow \tau) \Rightarrow
      \sigma \Rightarrow \rho$}
    \DisplayProof\vspace{0.5cm}\\
    \AxiomC{\phantom{$t : \iota$}}
    \UnaryInfC{${\fix_\sigma} \text{ of type } (\sigma \Rightarrow \sigma)
      \Rightarrow \sigma$}
    \DisplayProof \quad
    \AxiomC{$s \text{ of type } \sigma \Rightarrow \tau$}
    \AxiomC{$t \text{ of type } \tau$}
    \BinaryInfC{$(st) \text{ of type } \tau$}
    \DisplayProof
  \end{center}
  We will often drop the parentheses in the final clause, as well as the PCF
  type subscripts in \({\PCFk_{\sigma,\tau}}\), \({\PCFs_{\sigma,\tau,\rho}}\)
  and \({\fix_\sigma}\). Finally, we employ the convention that the parentheses
  associate to the left, i.e.\ we write $rst$~for~$(rs)t$.
\end{definition}
\begin{definition}[\coqident{Partiality}{PCF}{numeral}]
  \label{def:PCFnumerals}
  For any $n : \natt$, let us write $\underline n$ for the $n$th PCF numeral,
  defined inductively as:
  \[
    \underline 0 \colonequiv {\zeroo};\quad \underline {n+1} \colonequiv
    \succc\,\underline n.
  \]
\end{definition}

To define the small-step operational semantics of PCF, we first define the
following inductive type.
\begin{definition}[\coqident{Partiality}{PCF}{smallstep'},
  \coqident{Partiality}{PCF}{smallstep}]\label{def:small-step}
  Define the \emph{small-step pre-relation} $\smallsteppre$ of type
  \[
    \prod_{\sigma : \text{PCF types}} \text{PCF terms of type $\sigma$} \to
    \text{PCF terms of type $\sigma$} \to \mathcal{U}_0
  \]
  as the inductive family generated by:
  \begin{center}
    \AxiomC{\phantom{$f \smallsteppre g$}}
    \UnaryInfC{$\pred\underline 0\smallsteppre\underline 0$}
    \DisplayProof \quad \AxiomC{\phantom{$f \smallsteppre g$}}
    \UnaryInfC{$\pred\underline {n+1}\smallsteppre\underline n$}
    \DisplayProof \quad \AxiomC{\phantom{$f \smallsteppre g$}}
    \UnaryInfC{$\ifz s\,t\,\underline 0\smallsteppre s$} \DisplayProof
    \quad \AxiomC{\phantom{$f \smallsteppre g$}}
    \UnaryInfC{$\ifz s\,t\,\underline {n+1}\smallsteppre t$}
    \DisplayProof \vspace{0.5cm}
    \\
    \AxiomC{\phantom{$f \smallsteppre g$}}
    \UnaryInfC{${\PCFk} st\smallsteppre s$} \DisplayProof \quad
    \AxiomC{\phantom{$f \smallsteppre g$}}
    \UnaryInfC{${\PCFs}fgt\smallsteppre ft(gt)$} \DisplayProof \quad
    \AxiomC{\phantom{$f\smallsteppre g$}}
    \UnaryInfC{$\fix f\smallsteppre f(\fix f)$} \DisplayProof
    \quad \AxiomC{$f\smallsteppre g$} \UnaryInfC{$ft\smallsteppre gt$}
    \DisplayProof \vspace{0.5cm}
    \\
    \AxiomC{$s \smallsteppre t$}
    \UnaryInfC{$\succc s \smallsteppre \succc t$} \DisplayProof
    \quad \AxiomC{$s \smallsteppre t$}
    \UnaryInfC{$\pred s \smallsteppre \pred t$} \DisplayProof
    \quad \AxiomC{$r \smallsteppre r'$}
    \UnaryInfC{$\ifz s\,t\,r \smallsteppre \ifz s\,t\,r'$}
    \DisplayProof
  \end{center}

  We have been unable to prove that $s \smallsteppre t$ is a proposition for
  every suitable PCF terms $s$~and~$t$. The difficulty is that one cannot
  perform induction on \emph{both} \(s\) and \(t\). However, conceptually,
  $s \smallsteppre t$ should be a proposition, as (by inspection of the
  definition), there is at most one way by which we obtained
  $s \smallsteppre t$. Moreover, for technical reasons that will become apparent
  later, we really want $\smallsteppre$ to be propostion-valued.

  We solve the problem by defining the \emph{small-step relation} $\smallstep$ as
  the propositional truncation of~$\smallsteppre$, i.e.\
  $s \smallstep t \colonequiv \squash{s \smallsteppre t}$.
\end{definition}

\begin{remark}
  Benedikt Ahrens pointed out that in an impredicative framework, one could use
  propositional resizing and an impredicative encoding, i.e.\ by defining
  $\smallstep$ as a $\Pi$-type of all suitable proposition-valued
  relations. This is similar to the situation in set theory, where one would
  define $\smallstep$ as an intersection. Specifically, say that a relation
  \[
    R : \prod_{\sigma : \text{PCF types}} \pa*{\text{PCF terms of type
        \(\sigma\)} \to \text{PCF terms of type \(\sigma\)} \to \Omega_{\mathcal
        U_0}}
  \] is \emph{suitable} if it closed under all the clauses of
  Definition~\ref{def:small-step}, i.e.\
  \(R\pa*{\iota,\pred \underline 0,\underline 0}\),
  \(R\pa*{\iota,\pred \underline{n+1},\underline n}\), etc.\ are all
  inhabited.  We could define
  \(s \smallstep_{\text{impred}} t \colonequiv \prod_{R \text{ suitable}}
  R(\sigma,s,t)\). But notice the increase in universe level:
  \[
    \smallstep_{\text{impred}} : \prod_{\sigma : \text{PCF types}}
    \pa*{\text{PCF terms of type \(\sigma\)} \to \text{PCF terms of type
        \(\sigma\)} \to \Omega_{\mathcal U_1}}.
  \]
  So because of this increase, \(\smallstep_{\text{impred}}\) itself is not one
  of the suitable relations. Therefore \(\smallstep_{\text{impred}}\) does not
  satisfy the appropriate universal property in being the least relation closed
  under the clauses in Definition~\ref{def:small-step}. With propositional
  resizing we could resize \(\smallstep_{\text{impred}}\) to a
  \(\mathcal U_0\)-valued relation satisfying the appropriate universal
  property.
  The advantage of using the propositional truncation above is that it does
  satisfy the right universal property even without propositional resizing.
\end{remark}

  Let $R : X \to X \to \Prop$ be a relation on a type $X$. We might try to
  define the reflexive transitive closure $R_\ast$ of $R$ as an inductive type,
  generated by three constructors:
  \begin{alignat*}{2}
    &\mathsf{extend} &&: \prod_{x,y : X} x R y \to x R_\ast y; \\
    &\mathsf{refl}   &&: \prod_{x : X} x R_\ast x; \\
    &\mathsf{trans} &&: \prod_{x,y,z : X} x R_\ast y \to y R_\ast z \to x R_\ast
    z.
  \end{alignat*}
  But $R_\ast$ is not necessarily proposition-valued, even though $R$ is. This
  is because we might add a pair $(x,y)$ to $R_\ast$ in more than one way, for
  example, once by an instance of $\mathsf{extend}$ and once by an instance of
  $\mathsf{trans}$. Thus, we are led to the following definition.

\begin{definition}
  [\coqident{MoreFoundations}{ClosureOfHrel}{refl_trans_clos},
  \coqident{MoreFoundations}{ClosureOfHrel}{refl_trans_clos_hrel}]
  \label{def:refltransclos}
  Let $R : X \to X \to \Prop$ be a relation on a type $X$. We define the
  \emph{reflexive transitive closure} $R^\ast$ of $R$ by
  $x R^\ast y \colonequiv \squash*{x R_\ast y}$, where $R_\ast$ is as above.
\end{definition}
It is not hard to show that $R^\ast$ is the least reflexive and transitive
proposition-valued relation that extends $R$, so $R^\ast$ satisfies the
appropriate universal property
(\coqident{MoreFoundations}{ClosureOfHrel}{refl_trans_clos_univprop}).

Some properties of $\smallstep$ reflect onto $\smallstep^\ast$ as the following
lemma shows.
\begin{lemma}
  \label{refltransreduce}
  Let $r', r, s$ and $t$ be PCF terms of type $\iota$. If
  $r' \smallstep^\ast r$, then
  \begin{enumerate}
  \item\label{refltransreduce-1} $\succc r' \smallstep^\ast \succc r$;
  \item $\pred r' \smallstep^\ast \pred r$;
  \item $\ifz s\,t\,r' \smallstep^\ast \ifz s\,t\,r$.
  \end{enumerate}
  Moreover, if $f$ and $g$ are PCF terms of type $\sigma\Rightarrow\tau$ and
  $f \smallstep^\ast g$, then $f t \smallstep^\ast g t$ for any PCF term $t$ of
  type $\sigma$.
\end{lemma}
\begin{proof}
  [Proof \textnormal{(\coqident{Partiality}{PCF}{succ_refltrans_smallstep},
    \coqident{Partiality}{PCF}{pred_refltrans_smallstep},
    \coqident{Partiality}{PCF}{ifz_refltrans_smallstep},
    \coqident{Partiality}{PCF}{app_refltrans_smallstep})}] We only
  prove~\eqref{refltransreduce-1} the rest is similar. Suppose
  $r' \smallstep^\ast r'$.  Since $\succc r' \smallstep^\ast \succc r$ is a
  proposition, we may assume that we actually have a term $p$ of type
  $r' \smallstep_\ast r'$. Now we can perform induction on $p$. The cases were
  $p$ is formed using $\mathsf{refl}$ or $\mathsf{trans}$ are easy. If $p$ is
  formed by $\mathsf{extend}$, then we get a term of type
  $r \smallstep r' \equiv \squash{r \widetilde{\smallstep} r'}$. Again, as we
  are proving a proposition, we may suppose the existence of a term of type
  $r \widetilde{\smallstep} r'$. By Definition~\ref{def:small-step}, we
  then get $\succc r'\widetilde{\smallstep}\succc r$. This in turn yields,
  $\succc r'\smallstep\succc r$ and finally we use $\mathsf{extend}$ to get the
  desired $\succc r'\smallstep^\ast\succc r$.
\end{proof}

\section{The Scott model of PCF using the lifting monad}
\label{sec:Scottmodel}
Next, we wish to give a denotational semantics for PCF, namely the Scott model,
as explained in Definition~\ref{background:models}. We recall that the idea is
to assign some mathematical structure to each PCF type. The PCF terms are then
interpreted as elements of the structure.

\begin{definition}[\coqident{Partiality}{PCF}{denotational_semantics_type}]
  Inductively assign to each PCF type $\sigma$ a dcpo with $\bot$ as follows:
  \begin{enumerate}
  \item $\densem{\iota} \colonequiv \lift(\natt)$;
  \item
    $\densem{\sigma \Rightarrow \tau} \colonequiv
    \densem{\tau}^{\densem{\sigma}}$.
  \end{enumerate}
  Recall that if $D$ and $E$ are dcpos with $\bot$, then $E^D$ is the dcpo with
  $\bot$ of dcpo morphisms from $D$ to $E$, with pointwise ordering and
  pointwise least upper bounds.
\end{definition}

Next, we interpret PCF terms as elements of these dcpos with $\bot$, for which
we will need that \(\lift\) is a monad (with unit \(\eta\)) and (in particular)
a functor (recall Theorem~\ref{lifting-is-monad} and Remark~\ref{liftfunctoreq}).

\begin{definition}[\coqident{Partiality}{PCF}{denotational_semantics_terms}]
  Define for each PCF term $t$ of PCF type $\sigma$ a term $\densem{t}$ of type
  $\densem{\sigma}$, by the following inductive clauses:
  \begin{enumerate}
  \item $\densem{\zeroo} \colonequiv \eta(0)$;
  \item $\densem{\succc} \colonequiv \lift(s)$, where
    $s : \natt \to \natt$ is the successor function;
  \item $\densem{\pred} \colonequiv \lift(p)$, where
    $p : \natt \to \natt$ is the predecessor function;
  \item
    $\densem{\ifz} :
    \densem{\iota\Rightarrow\iota\Rightarrow\iota\Rightarrow\iota}$ is defined
    using the Kleisli extension as: $\lambda x,y.\pa*{\chi_{x,y}}^{\#}$, where
    \[
      \chi_{x,y}(n) \colonequiv
      \begin{cases}
        x &\text{if } n = 0; \\
        y &\text{else};
      \end{cases}
    \]
  \item $\densem{\PCFk} \colonequiv \lambda x, y . x$;
  \item $\densem{\PCFs} \colonequiv \lambda f, g, x . (f (x)) (g (x))$;
  \item $\densem{\fix} \colonequiv \mu$, where $\mu$ is the least fixed
    point operator from Theorem~\ref{leastfixedpoint}.
  \end{enumerate}
\end{definition}

\begin{remark}
  Of course, there are some things to be proved here. Namely,
  $\densem{\succc}, \densem{\pred}, \dots, \densem{\fix}$ all need to be dcpo
  morphisms. In the case of $\densem{\succc}$ and $\densem{\pred}$, we simply
  appeal to Theorem~\ref{Kleisliextensioncont} and
  Remark~\ref{liftfunctoreq}. For $\densem{\fix}$, this is
  Theorem~\ref{leastfixedpoint}. The continuity of
  $\densem{\PCFk}, \densem{\PCFs}$ and $\densem{\ifz}$ can be verified directly,
  as done in the formalisation (\coqident{Partiality}{PCF}{k_dcpo},
  \coqident{Partiality}{PCF}{s_dcpo},
  \coqident{Partiality}{PCF}{lifted_ifz}). It is however, unenlightning and
  tedious, so we omit the details here.
\end{remark}

As a first result about our denotational semantics, we show that the PCF
numerals have a canonical interpretation in the denotational semantics.
\begin{proposition}
  \label{densemnumeral}
  For every natural number $n$, we have $\densem{\underline n} = \eta (n)$.
\end{proposition}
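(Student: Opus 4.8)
The plan is to prove this by induction on $n$, unfolding the inductive definition of the numerals (\cref{def:PCFnumerals}) together with the relevant clauses of the denotational semantics.

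For the base case $n = 0$ there is nothing to do: $\densem{\underline 0}$ is by definition $\densem{\mathsf{zero}}$, which is by definition $\eta(0)$.

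For the inductive step, I would assume $\densem{\underline n} = \eta(n)$. Since $\underline{n+1}$ is by definition $\mathsf{succ}\,\underline n$, and the denotational semantics interprets an application $st$ as $\densem{s}(\densem{t})$, the induction hypothesis gives
\[
  \densem{\underline{n+1}} = \densem{\mathsf{succ}}\bigl(\densem{\underline n}\bigr) = \lift(s)\bigl(\eta(n)\bigr),
\]
where $s : \natt \to \natt$ is the successor function. It then remains to check that $\lift(s)(\eta(n)) = \eta(n+1)$. I would obtain this from the monad laws: by \cref{liftfunctoreq} we have $\lift(s) = (\eta \circ s)^\#$, so \cref{Kleislilawsforlift}(ii) yields $\lift(s)(\eta(n)) = (\eta \circ s)^\#(\eta(n)) = \eta(s(n)) = \eta(n+1)$, the last equality holding because $s(n)$ computes to $n+1$. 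Alternatively, one can unfold directly using \cref{lift_embedding} and the postcomposition description of $\lift$ from \cref{liftfunctoreq}: $\lift(s)(\eta(n)) \equiv \lift(s)(\unitt, \lambda t.\,n) \equiv (\unitt,\, s \circ \lambda t.\, n) \equiv (\unitt, \lambda t.\, (n+1)) \equiv \eta(n+1)$.

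There is no genuine difficulty here; the statement merely records that the interpretation is compatible with the inductive definition of the numerals. The only place where a little care is warranted is the identification $\lift(s)(\eta(n)) = \eta(n+1)$ inside $\lift(\natt)$, since it is an equality of $\Sigma$-types; but, as indicated, this is immediate either from the Kleisli laws or, if one prefers not to reason judgementally about the underlying pairs, from the characterisation of equality in the lifting (\cref{lifteqchar}).
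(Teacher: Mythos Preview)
Your proof is correct and follows the same route as the paper: induction on $n$, with the base case by definition and the inductive step reducing to $\lift(s)(\eta(n)) = \eta(n+1)$. The paper dispatches this last equality simply ``by definition of the lift functor'', whereas you spell out two ways to see it (via the Kleisli laws or by direct unfolding), but the argument is otherwise identical.
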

\begin{proof}[\coqidentproof{Partiality}{PCF}{denotational_semantics_numerals}]
  We proceed by induction on $n$. The $n\equiv 0$ case is by definition of
  $\densem{\underline 0}$. Suppose $\densem{\underline m} = \eta (m)$ for a
  natural number $m$. Then,
  \begin{alignat*}{2}
    \densem{\underline {m+1}} &= \densem{\succc}(\densem{\underline
                                m}) \\
                              &= \lift(s)(\eta (m))\quad&&\text{(by induction
                                                          hypothesis)} \\
                              &= \eta(m + 1) \quad&&\text{(by definition of
                                                    the lift functor)},
  \end{alignat*}
  as desired.
\end{proof}

\section{Soundness and computational adequacy}
\label{sec:soundnesscompadequacy}
In this section we show that the denotational semantics and the operational
semantics defined above are ``in sync'', as expressed by soundness and
computational adequacy (cf.\ Section~\ref{background:models}).

\begin{therm}[Soundness]
  Let $s$ and $t$ be any PCF terms of PCF type $\sigma$. If
  $s \smallstep^\ast t$, then $\densem{s} = \densem{t}$.
\end{therm}
\begin{proof}[\coqidentproof{Partiality}{PCF}{soundness}]
  Since the carriers of dcpos are defined to be sets, the type
  \(\densem{s} = \densem{t}\) is a proposition. Therefore, we can use induction
  on the derivation of \(s \smallstep^\ast t\). We use the Kleisli monad laws in
  proving some of the cases. For example, one step is to prove that
  \[
    \densem{\ifz s\,t\,\underline{n+1}} = \densem{t}.
  \]
  This may be proved by the following chain of equalities:
  \begin{alignat*}{2}
    \densem{\ifz s\,t\,\underline{n+1}} &=
    \densem{\ifz s\,t}(\densem{\underline{n+1}}) \\
    &= \densem{\ifz s\,t}(\eta(n+1))\quad&&\text{(by
      Proposition~\ref{densemnumeral})} \\
    &= (\chi_{\densem{s},\densem{t}})^\#(\eta(n+1)) \quad&&\text{(by definition
      of $\densem{\ifz}$)} \\
    &= \chi_{\densem{s},\densem{t}}(n+1) \quad&&\text{(by
      Theorem~\ref{lifting-is-monad})} \\ &= \densem{t}.\tag*{\qedhere}
  \end{alignat*}
\end{proof}

Ideally, we would like a converse to soundness. However, this is not possible,
as for example,
${\densem{\PCFk{\zeroo}}} = {\densem{\PCFk (\succc (\pred \zeroo))}}$, but
neither ${\PCFk \zeroo} \smallstep^\ast {\PCFk (\succc (\pred \zeroo))}$ nor
${\PCFk (\succc (\pred \zeroo))} \smallstep^\ast {\PCFk \zeroo}$ holds. We do,
however, have the following.

\begin{therm}[Computational adequacy]
  \label{Adequacy}
  Let $t$ be a PCF term of PCF type $\iota$. Then,
  \[
    \prod_{p : \isdefined (\densem{t})} t \smallstep^\ast \underline{\liftvalue
      (\densem{t}) (p)}.
  \]
  Equivalently, for every $n : \natt$, it holds that
  $ \densem{t} = \densem{\underline{n}} $ implies
  $t \smallstepclos {\underline n}$.
\end{therm}

We do not prove computational adequacy directly, as, unlike soundness, it does
not allow for a straightforward proof by induction. Instead, we use the standard
technique of logical relations \cite[Chapter 7]{Streicher2006} and obtain the
result as a direct corollary of Lemma~\ref{mainlemma}.

\begin{definition}[\coqident{Partiality}{PCF}{adequacy_relation}]
  For every PCF type $\sigma$, define a relation
  \[
    R_\sigma : \textup{PCF terms of type $\sigma$} \to \densem{\sigma} \to \Prop
  \]
  by induction on $\sigma$:
  \begin{enumerate}
  \item
    $t R_\iota d \colonequiv \prod_{p : \isdefined (d)} t \smallstep^\ast
    \underline {\liftvalue(d)(p)}$;
  \item
    $s R_{\tau \Rightarrow \rho} f \colonequiv \prod_{t : \text{PCF terms of
        type $\tau$}} \prod_{d : \densem{\tau}} \pa*{t R_\tau d \to st R_\rho
      f(d)}$.
  \end{enumerate}
  We sometimes omit the type subscript $\sigma$ in $R_\sigma$.
\end{definition}

\begin{lemma}
  \label{logicalrelationsmall-step}
  Let $s$ and $t$ be PCF terms of type $\sigma$ and let $d$ be an element
  of~$\densem{\sigma}$. If $s \smallstep^\ast t$ and $t R_\sigma d$, then
  $s R_\sigma d$.
\end{lemma}
\begin{proof}[\coqidentproof{Partiality}{PCF}{adequacy_step}]
  By induction on $\sigma$, making use of the last part of
  Lemma~\ref{refltransreduce}.
\end{proof}

\begin{lemma}
  \label{logicalrelationconstants}
  For $t$ equal to
  $\zeroo, {\succc}, {\pred}, {\ifz}, {\PCFk}$ or
  ${\PCFs}$, we have: $t R \densem{t}$.
\end{lemma}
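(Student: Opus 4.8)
The plan is to handle each of the six constants separately, in each case unfolding the definition of $R$ (one level for $\mathsf{succ}$ and $\mathsf{pred}$, two for $\mathsf{k}$, three for $\mathsf{ifz}$ and $\mathsf{s}$) and reducing the goal to a basic smallstep reduction. The constant $\mathsf{zero}$ is immediate: $\mathsf{zero}\,R_\iota\,\eta(0)$ unwinds to $\mathsf{zero} \smallstep^\ast \underline 0$, which holds by reflexivity of $\smallstep^\ast$ since $\underline 0 \equiv \mathsf{zero}$.

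For $\mathsf{succ}$: fix a PCF term $t$ of type $\iota$ and $d : \densem{\iota}$ with $t\,R_\iota\,d$; we must show $\mathsf{succ}\,t\,R_\iota\,\lift(s)(d)$, where $s$ is the successor function. Writing $d = (P,\phi)$, by \cref{liftfunctoreq} we have $\isdefined(\lift(s)(d)) = P$ and $\liftvalue(\lift(s)(d))(p) = s(\liftvalue(d)(p))$. So given $p : \isdefined(\lift(s)(d))$, the hypothesis $t\,R_\iota\,d$ yields $t \smallstep^\ast \underline{\liftvalue(d)(p)}$, and hence $\mathsf{succ}\,t \smallstep^\ast \mathsf{succ}\,\underline{\liftvalue(d)(p)} \equiv \underline{\liftvalue(d)(p) + 1}$ by \cref{refltransreduce}(i). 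The case of $\mathsf{pred}$ is analogous, additionally using the basic reductions $\mathsf{pred}\,\underline 0 \smallstep \underline 0$ and $\mathsf{pred}\,\underline{n+1} \smallstep \underline n$ together with \cref{refltransreduce}(ii).

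For $\mathsf{k}$ and $\mathsf{s}$ the key tool is \cref{logicalrelationsmallstep}, which lets us replace a term by anything it reduces to. For $\mathsf{k}$: after fixing $s_1\,R\,d_1$ and $s_2\,R\,d_2$ --- so that $\densem{\mathsf{k}}(d_1)(d_2) = d_1$ --- the reduction $\mathsf{k}\,s_1\,s_2 \smallstep s_1$ together with $s_1\,R\,d_1$ gives $\mathsf{k}\,s_1\,s_2\,R\,d_1$ by \cref{logicalrelationsmallstep}. For $\mathsf{s}$: after fixing $s_F\,R\,d_F$, $s_G\,R\,d_G$ and $s_x\,R\,d_x$ --- so that $\densem{\mathsf{s}}(d_F)(d_G)(d_x) = (d_F(d_x))(d_G(d_x))$ --- the reduction $\mathsf{s}\,s_F\,s_G\,s_x \smallstep s_F\,s_x\,(s_G\,s_x)$ reduces the goal, via \cref{logicalrelationsmallstep}, to $s_F\,s_x\,(s_G\,s_x)\,R\,(d_F(d_x))(d_G(d_x))$, which follows by three applications of the function-type clause of $R$.

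The main work is the $\mathsf{ifz}$ case, because of the Kleisli extension. After unfolding three levels of $R$ we fix $s_1\,R_\iota\,d_1$, $s_2\,R_\iota\,d_2$, $s_3\,R_\iota\,d_3$ and must show $\mathsf{ifz}\,s_1\,s_2\,s_3\,R_\iota\,(\chi_{d_1,d_2})^\#(d_3)$. Unfolding $(-)^\#$, a witness $q : \isdefined\big((\chi_{d_1,d_2})^\#(d_3)\big)$ is a pair of some $p : \isdefined(d_3)$ and a proof that $\chi_{d_1,d_2}(\liftvalue(d_3)(p))$ is defined, and $\liftvalue((\chi_{d_1,d_2})^\#(d_3))(q)$ is the corresponding value of $\chi_{d_1,d_2}(\liftvalue(d_3)(p))$. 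Put $m \colonequiv \liftvalue(d_3)(p)$; then $s_3\,R_\iota\,d_3$ gives $s_3 \smallstep^\ast \underline m$, so $\mathsf{ifz}\,s_1\,s_2\,s_3 \smallstep^\ast \mathsf{ifz}\,s_1\,s_2\,\underline m$ by \cref{refltransreduce}(iii). Now decide whether $m$ is $0$ or a successor. If $m = 0$, then $\chi_{d_1,d_2}(0) = d_1$, so the second component of $q$ is a proof $p_1 : \isdefined(d_1)$, the value is $\liftvalue(d_1)(p_1)$, and $\mathsf{ifz}\,s_1\,s_2\,\underline 0 \smallstep s_1 \smallstep^\ast \underline{\liftvalue(d_1)(p_1)}$ using $s_1\,R_\iota\,d_1$; by transitivity $\mathsf{ifz}\,s_1\,s_2\,s_3 \smallstep^\ast \underline{\liftvalue((\chi_{d_1,d_2})^\#(d_3))(q)}$, as required. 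The case $m = n + 1$ is symmetric, using $\mathsf{ifz}\,s_1\,s_2\,\underline{n+1} \smallstep s_2$ and $s_2\,R_\iota\,d_2$. The only delicate point is the bookkeeping of the propositional $\isdefined$-witnesses and checking that the numeral reached is exactly the one extracted by $\liftvalue$ from $(\chi_{d_1,d_2})^\#(d_3)$ at $q$ --- which is precisely what the definitions of $f^\#$ and $\chi$ deliver.
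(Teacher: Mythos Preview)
Your proof is correct and follows essentially the same approach as the paper, which simply cites \cref{logicalrelationsmallstep} and \cref{refltransreduce}(i)--(iii) without spelling out the cases. You have provided exactly the detailed case analysis that the paper's one-line proof elides, using the same ingredients in the same way.
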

\begin{proof}
  [Proof \textnormal{(\coqident{Partiality}{PCF}{adequacy_zero},
  \coqident{Partiality}{PCF}{adequacy_succ},
  \coqident{Partiality}{PCF}{adequacy_pred},
  \coqident{Partiality}{PCF}{adequacy_ifz},
  \coqident{Partiality}{PCF}{adequacy_k},\\
  \coqident{Partiality}{PCF}{adequacy_s})}.]
  By the previous lemma and Lemma~\ref{refltransreduce}.
\end{proof}

Next, we wish to extend the previous lemma the case where
$t \equiv {\fix_\sigma}$ for any PCF type $\sigma$. This is slightly more
complicated and we need two intermediate lemmas. Only the second requires a
non-trivial proof.

\begin{lemma}
  \label{logicalrelationbottom}
  Let $\sigma$ be a PCF type and let $\bot$ be the least element of
  $\densem{\sigma}$. Then, $t R_\sigma \bot$ for any PCF term $t$ of type
  $\sigma$.
\end{lemma}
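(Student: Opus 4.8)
The plan is to prove this by induction on the PCF type $\sigma$, unfolding the definition of $R_\sigma$ in each case. The statement is essentially the observation that $\bot$ imposes no constraints, so in each case the relation holds vacuously or by an immediate appeal to the inductive hypothesis.

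For the base case $\sigma \equiv \iota$, recall that $\densem{\iota} \equiv \lift(\natt)$ and its least element is $\bot_\natt \equiv (\emptyt, \textup{\textsf{from-}}\emptyt_\natt)$ from \cref{lift_least}. By definition, $t R_\iota \bot_\natt$ is the type $\prod_{p : \isdefined(\bot_\natt)} t \smallstep^\ast \underline{\liftvalue(\bot_\natt)(p)}$. Since $\isdefined(\bot_\natt) \equiv \mathsf{pr_1}(\bot_\natt) \equiv \emptyt$, this is a $\Pi$-type over the empty type and is therefore inhabited by the unique function out of $\emptyt$; so $t R_\iota \bot$ for every PCF term $t$ of type $\iota$.

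For the inductive step $\sigma \equiv \tau \Rightarrow \rho$, recall that $\densem{\tau \Rightarrow \rho} \equiv \densem{\rho}^{\densem{\tau}}$ and that, by the proof of \cref{dcpowithbot_ofdcpomor}, its least element is the morphism that sends each $d : \densem{\tau}$ to the least element $\bot_{\densem{\rho}}$ of $\densem{\rho}$ (the order being pointwise). To show $t R_{\tau \Rightarrow \rho} \bot$, we take an arbitrary PCF term $s$ of type $\tau$ and an arbitrary $d : \densem{\tau}$ with $s R_\tau d$, and we must produce a term of type $st R_\rho \bot(d)$. But $\bot(d)$ is precisely the least element $\bot_{\densem{\rho}}$ of $\densem{\rho}$, and $st$ is a PCF term of type $\rho$, so the induction hypothesis (applied to $\rho$) immediately gives $st R_\rho \bot_{\densem{\rho}}$, as required.

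I do not expect any genuine obstacle here; the only points needing care are purely definitional: recognising that $\isdefined(\bot_\natt)$ is the empty type in the base case, and identifying the least element of the exponential dcpo with the constantly-$\bot$ morphism in the inductive step (which is exactly what \cref{dcpowithbot_ofdcpomor} provides).
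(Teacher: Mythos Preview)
Your proof is correct and follows exactly the paper's approach: induction on $\sigma$, with the base case holding vacuously (since $\isdefined(\bot_{\natt}) \equiv \emptyt$) and the inductive step reducing to the induction hypothesis via the pointwise description of $\bot$ in the exponential. One small slip: in the inductive step your $t$ has type $\tau \Rightarrow \rho$ and $s$ has type $\tau$, so the application should be written $ts$, not $st$.
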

\begin{proof}[\coqidentproof{Partiality}{PCF}{adequacy_bottom}]
  By induction on $\sigma$. For the base type, this holds vacuously. For
  function types, it follows by induction hypothesis and the pointwise ordering.
\end{proof}

\begin{lemma}
  The logical relation is closed under directed suprema. That is, for every PCF
  term $t$ of type $\sigma$ and every directed family
  $d : I \to \densem{\sigma}$, if $t R_\sigma d_i$ for every $i : I$, then
  $t R_\sigma \bigsqcup_{i : I}d_i$.
\end{lemma}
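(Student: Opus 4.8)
The plan is to proceed by induction on the PCF type $\sigma$, mirroring the structure of the definition of $R_\sigma$.

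For the base type $\sigma \equiv \iota$, recall that $\densem{\iota} \equiv \lift(\natt)$ and that, by (the proof of) \cref{liftofsetisdcpo}, the supremum $\bigsqcup_{i : I}d_i$ has underlying proposition $\squash*{\sum_{i : I}\isdefined(d_i)}$, with value map the factorisation of $\psi \colon (i,e) \mapsto \liftvalue(d_i)(e)$ through this truncation. To establish $t \mathbin{R_\iota} \bigsqcup_{i : I}d_i$, I would assume $q : \isdefined(\bigsqcup_{i : I}d_i)$ and show $t \smallstep^\ast \underline{\liftvalue(\bigsqcup_{i : I}d_i)(q)}$. Since this is a proposition (as $\smallstep^\ast$ is proposition-valued), we may eliminate the truncation in $q$ and assume an honest pair $(i,e)$ with $e : \isdefined(d_i)$. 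By the factorisation and the fact that $\isdefined(\bigsqcup_{i : I}d_i)$ is a proposition, $\liftvalue(\bigsqcup_{i : I}d_i)(q) = \psi(i,e) = \liftvalue(d_i)(e)$, so the hypothesis $t \mathbin{R_\iota} d_i$ applied to $e$ gives precisely what we want.

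For a function type $\sigma \equiv \tau \Rightarrow \rho$, recall that suprema in $\densem{\tau \Rightarrow \rho} \equiv \densem{\rho}^{\densem{\tau}}$ are computed pointwise. Given a PCF term $u$ of type $\tau$ and $d : \densem{\tau}$ with $u \mathbin{R_\tau} d$, I must show $su \mathbin{R_\rho} (\bigsqcup_{i : I}d_i)(d)$. The family $i \mapsto d_i(d)$ is directed, being the image of the directed family $d$ under the monotone evaluation-at-$d$ map, and $(\bigsqcup_{i : I}d_i)(d) = \bigsqcup_{i : I}d_i(d)$. For each $i : I$, the hypothesis $s \mathbin{R_{\tau \Rightarrow \rho}} d_i$ together with $u \mathbin{R_\tau} d$ yields $su \mathbin{R_\rho} d_i(d)$. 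The induction hypothesis, applied to the PCF term $su$ of type $\rho$ and the directed family $i \mapsto d_i(d)$, then gives $su \mathbin{R_\rho} \bigsqcup_{i : I}d_i(d) = (\bigsqcup_{i : I}d_i)(d)$, as required.

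I expect the only mildly delicate step to be the base case: one has to unfold the explicit construction of suprema in $\lift(\natt)$ from \cref{liftofsetisdcpo} and crucially use that the goal $t \smallstep^\ast \underline{\cdots}$ is proposition-valued, so that a genuine index $i : I$ and a witness of definedness can be extracted from the truncated supremum and matched against the factorised value map. The function-type case is then a routine consequence of pointwise suprema, monotonicity of evaluation, and the induction hypothesis.
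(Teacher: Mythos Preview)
Your proposal is correct and follows essentially the same approach as the paper: induction on $\sigma$, with the base case handled by unfolding the explicit construction of suprema in $\lift(\natt)$ and exploiting that $\smallstep^\ast$ is proposition-valued to untruncate, and the function-type case reduced to the induction hypothesis via pointwise suprema. The only cosmetic issue is that in the function-type case you reuse the letter $d$ both for the directed family and for the element of $\densem{\tau}$; renaming one of them would improve readability.
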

\begin{proof}[\coqidentproof{Partiality}{PCF}{adequacy_lubs}]
  This proof is somewhat different from the classical proof, so we spell out the
  details. We prove the lemma by induction on $\sigma$.

  The case when $\sigma$ is a function type is easy, because least upper bounds
  are calculated pointwise and so it reduces to an application of the induction
  hypothesis. We concentrate on the case when $\sigma \equiv \iota$ instead.

  Recall that $\bigsqcup_{i : I} d_i$ is given by
  $\pa*{\squash*{\sum_{i : I} \isdefined(d_i)}, \phi}$, where $\phi$ is the
  factorisation of
  \[
    \sum_{i : I} \isdefined(d_i) \to \lift(\natt), \quad (i, p_i) \mapsto
    \liftvalue(d_i)(p_i)
  \]
  through $\squash*{\sum_{i : I} \isdefined(d_i)}$.

  We are tasked with proving that $t \smallstep^\ast \underline{\phi (p)}$ for
  every $p : \isdefined\pa*{\bigsqcup_{i : I}d_i}$. So assume that
  $p : \squash*{\sum_{i : I}\isdefined(d_i)}$. Since we are trying to prove a
  proposition (as $\smallstep^\ast$ is proposition-valued), we may actually
  assume that we have $(j,p_{j}) : \sum_{i : I}\isdefined(d_i)$.  By definition
  of $\phi$ we have: $\phi (p) = \liftvalue(d_{j})(p_{j})$ and by assumption we
  know that $t \smallstep^\ast \underline{\liftvalue (d_{j}) (p_{j})}$, so we
  are done.
\end{proof}

\begin{lemma}
  For every PCF type $\sigma$, we have
  $\fix_\sigma R_{(\sigma \Rightarrow \sigma)\Rightarrow \sigma}
  \densem{\fix_\sigma}$.
\end{lemma}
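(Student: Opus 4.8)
The plan is to unfold the definition of $R$ at the function type $(\sigma \Rightarrow \sigma) \Rightarrow \sigma$. So let $t$ be a PCF term of type $\sigma \Rightarrow \sigma$ and let $f : \densem{\sigma}^{\densem{\sigma}}$ with $t\, R_{\sigma \Rightarrow \sigma}\, f$; we must show $\mathsf{fix}\,t\, R_\sigma\, \densem{\mathsf{fix}}(f)$. By \cref{leastfixedpoint} we have $\densem{\mathsf{fix}}(f) = \mu(f) = \bigsqcup_{n : \natt} f^n(\bot)$, and the family $n \mapsto f^n(\bot)$ is directed (it is the ascending chain $\bot \leq f(\bot) \leq f^2(\bot) \leq \cdots$, obtained from the directed family $n \mapsto \mathsf{iter}(n)$ of \cref{leastfixedpoint} evaluated at $f$). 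Hence, by the lemma that the logical relation is closed under directed suprema, it suffices to prove $\mathsf{fix}\,t\, R_\sigma\, f^n(\bot)$ for every $n : \natt$.

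We proceed by induction on $n$. For $n = 0$, we have $f^0(\bot) = \bot$, the least element of $\densem{\sigma}$, so this is exactly \cref{logicalrelationbottom}. For the induction step, assume $\mathsf{fix}\,t\, R_\sigma\, f^n(\bot)$. Applying the hypothesis $t\, R_{\sigma \Rightarrow \sigma}\, f$ to the PCF term $\mathsf{fix}\,t$, the element $f^n(\bot) : \densem{\sigma}$, and the induction hypothesis, we obtain $t(\mathsf{fix}\,t)\, R_\sigma\, f\bigl(f^n(\bot)\bigr) = f^{n+1}(\bot)$. Now the reduction rule $\mathsf{fix}\,f \smallsteppre f(\mathsf{fix}\,f)$ of \cref{def:smallstep} gives $\mathsf{fix}\,t \smallstep t(\mathsf{fix}\,t)$, hence $\mathsf{fix}\,t \smallstep^\ast t(\mathsf{fix}\,t)$, and so \cref{logicalrelationsmallstep} yields $\mathsf{fix}\,t\, R_\sigma\, f^{n+1}(\bot)$, completing the induction.

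There is no serious obstacle in this argument; it is the expected logical-relations treatment of a fixed-point combinator. The two points that make it work are already established: first, $R_\sigma$ is closed under directed suprema, which lets us reduce the claim for $\mu(f)$ to claims for the finite iterates $f^n(\bot)$; and second, $R_\sigma$ is closed \emph{backwards} along $\smallstep^\ast$ (\cref{logicalrelationsmallstep}), which is exactly what is needed to pull the relation established for $t(\mathsf{fix}\,t)$ back to $\mathsf{fix}\,t$ across the $\mathsf{fix}$-reduction. The mild care required is only bookkeeping: keeping the quantified PCF term in the definition of $R$ at a function type distinct from the fixed term $t$, and recording that the iterates genuinely form a directed ($\natt$-indexed) family so that the supremum lemma applies.
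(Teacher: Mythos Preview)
Your proof is correct and follows essentially the same approach as the paper: unfold $R$ at the function type, reduce to the finite iterates $f^n(\bot)$ via closure of $R$ under directed suprema, and handle those by induction using \cref{logicalrelationbottom} for the base case and \cref{logicalrelationsmallstep} together with the $\mathsf{fix}$-reduction for the step. The only differences are cosmetic---you spell out directedness of the iterate family and the reduction step a bit more explicitly than the paper does.
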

\begin{proof}[\coqidentproof{Partiality}{PCF}{adequacy_fixp}]
  Let $t$ be a PCF term of type $\sigma\Rightarrow\sigma$ and let
  $f : \densem{\sigma\Rightarrow\sigma}$ such that
  $t R_{\sigma\Rightarrow\sigma} f$. We are to prove that
  $\fix t R_\sigma \mu(f)$.

  By definition of $\mu$ and the previous lemma, it suffices to prove that
  $\fix t R_\sigma f^n(\bot)$ where $\bot$ is the least element of
  $\densem{\sigma}$ for every natural number $n$. We do so by induction on $n$.

  The base case is an application of Lemma~\ref{logicalrelationbottom}.

  Now suppose that $\fix t R_\sigma f^m(\bot)$. Then, using
  $t R_{\sigma\Rightarrow\sigma} f$, we find:
  $t(\fix t) R_\sigma f(f^m(\bot))$. Hence, by
  Lemma~\ref{logicalrelationsmall-step}, we obtain the desired
  $\fix t R_\sigma f^{m+1}(\bot)$, completing our proof by induction.
\end{proof}

\begin{lemma}[Fundamental Theorem]
  \label{mainlemma}
  For every PCF term $t$ of type $\sigma$, we have $t R_\sigma \densem{t}$.
\end{lemma}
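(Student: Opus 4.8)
The plan is to prove the Main Lemma by induction on the structure of the PCF term $t$, following the inductive clauses of \cref{def:PCFterms}. Almost all of the work has already been carried out: the cases where $t$ is one of the basic constants $\mathsf{zero}$, $\mathsf{succ}$, $\mathsf{pred}$, $\mathsf{ifz}$, $\mathsf{k}$ or $\mathsf{s}$ are precisely \cref{logicalrelationconstants}, and the case $t \equiv \mathsf{fix}_\sigma$ is the immediately preceding lemma. Hence the only clause that still needs an argument is the application case, which will be immediate from the way $R$ is defined on function types.

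So suppose $t \equiv s r$, where $s$ is a PCF term of type $\sigma \Rightarrow \tau$ and $r$ is a PCF term of type $\sigma$, so that $t$ has type $\tau$. By the induction hypothesis we have $s R_{\sigma \Rightarrow \tau} \densem{s}$ and $r R_\sigma \densem{r}$. Unfolding the definition of $R_{\sigma \Rightarrow \tau}$, the first of these says exactly that for every PCF term $u$ of type $\sigma$ and every $d : \densem{\sigma}$ with $u R_\sigma d$ we have $s u R_\tau \densem{s}(d)$. Instantiating $u \colonequiv r$, $d \colonequiv \densem{r}$ and feeding in $r R_\sigma \densem{r}$ yields $s r R_\tau \densem{s}(\densem{r})$. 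Since $\densem{s r} = \densem{s}(\densem{r})$ by definition of the denotational semantics on applications, this is precisely $t R_\tau \densem{t}$, as required.

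There is no genuine obstacle remaining at this stage: the substantive content has all been isolated into the preceding lemmas, and the design of the logical relation is precisely what makes the application case hold by construction. The one point worth recalling is that the real difficulty was the $\mathsf{fix}$ case, which was dispatched using \cref{logicalrelationbottom} together with the lemma that $R_\sigma$ is closed under directed suprema: these let one climb the chain $\bot \sqsubseteq f(\bot) \sqsubseteq f^2(\bot) \sqsubseteq \dots$ and conclude $\mathsf{fix}\,t R_\sigma \mu(f) = \bigsqcup_{n : \natt} f^n(\bot)$.
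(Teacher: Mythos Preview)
Your proposal is correct and follows essentially the same approach as the paper: induction on the term, with the constants handled by \cref{logicalrelationconstants}, the $\mathsf{fix}$ case by the preceding lemma, and the application case unwound directly from the definition of $R$ at function type together with $\densem{sr} = \densem{s}(\densem{r})$. If anything, your treatment of the application case is cleaner than the paper's slightly compressed wording.
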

\begin{proof}[\coqidentproof{Partiality}{PCF}{adequacy_allterms}]
  The proof is by induction on $t$. The base cases are taken care of by
  Lemma~\ref{logicalrelationconstants} and the previous lemma. For the inductive
  step, suppose $t$ is a PCF term of type $\sigma\Rightarrow\tau$. By induction
  hypothesis, $ts R_\tau \densem{ts}$ for every PCF term $s$ of type $\sigma$,
  but $\densem{ts} \equiv \densem{t}\densem{s}$, so we are done.
\end{proof}

Computational adequacy is now a direct corollary of Lemma~\ref{mainlemma}.
\begin{proof}[Proof of computational adequacy
  \textnormal{(\coqident{Partiality}{PCF}{adequacy},
    \coqident{Partiality}{PCF}{adequacy_alt},
    \coqident{Partiality}{PCF}{alt_adequacy})}]
  Take $\sigma$ to be the base type $\iota$ in Lemma~\ref{mainlemma}.
\end{proof}

\paragraph*{Using computational adequacy to compute.} An interesting use of
computational adequacy is that it allows one to argue semantically to obtain
results about termination (i.e.\ reduction to a numeral) in PCF. Classically,
every PCF program of type $\iota$ either terminates or it does not.  From a
constructive point of view, we wait for a program to terminate, with no a priori
knowledge of termination. The waiting could be indefinite. Less naively, we
could limit the number of computation steps to avoid indefinite waiting, with an
obvious shortcoming: how many steps are enough? Instead, one could use
computational adequacy to compute as follows.

Let $\sigma$ be a PCF type. A \emph{functional of type $\sigma$} is an element
of $\densem{\sigma}$. By induction on PCF types, we define when a functional is
said to be \emph{total}:
\begin{enumerate}
\item a functional $i$ of type $\iota$ is total if $i = \densem{\underline n}$
  for some natural number $n$;
\item a functional $f$ of type $\sigma\Rightarrow\tau$ is total if it maps total
  functionals to total functionals, viz.\ $f(d)$ is a total functional of type
  $\tau$ for every total functional $d$ of type $\sigma$.
\end{enumerate}
Now, let $s$ be a PCF term of type
$\sigma_1 \Rightarrow \sigma_2 \Rightarrow \dots \Rightarrow \sigma_n
\Rightarrow \iota$. If we can prove that $\densem{s}$ is total, then
computational adequacy lets us conclude that for all total inputs
$\densem{t_1} : \densem{\sigma_1},\dots,\densem{t_n} : \densem{\sigma_n}$, the
term $s(t_1,\dots,t_n)$ reduces to the numeral representing
$\densem{s}(\densem{t_1},\dots,\densem{t_n})$. Thus, the semantic proof of
totality plays the role of ``enough steps''. Of course, this still requires us
to prove that \(\densem{s}\) is total, which may be challenging. But the point
is that we can use domain-theoretic arguments to prove this about the denotation
\(\densem{s}\), whereas in a direct proof of termination we would only have the
operational semantics available for our argument.

\section{Semidecidable propositions and PCF terms of base type}
\label{sec:charprop}
In this section we characterise those propositions that arise from the PCF
interpretation, in the following sense. Every PCF term $t$ of base type $\iota$
gives rise to a proposition via the Scott model, namely
$\isdefined(\densem{t})$. We wish to show that such propositions are
semidecidable, which we define now. For ease of notation, we write $\exists$ for
the propositional truncation of $\Sigma$.

\begin{definition}\label{def:semidecidable}
  A proposition \(Q\) is \emph{semidecidable} if it is equivalent to
  \(\exists_{n_1 : \natt}\cdots\exists_{n_k : \natt}P(n_1,\dots,n_k)\) where
  \(k\) is some natural number and \(P : \natt^k \to \Omega\) is a
  proposition-valued family such that \(P(m_1,\dots,m_k)\) is decidable for every
  \((m_1,\dots,m_k) : \natt^k\).
\end{definition}

We will prove our goal that \(\isdefined(\densem{t})\) is semidecidable by
showing that it is logically equivalent to
$\exists_{n : \natt} \exists_{k : \natt} \,t \smallstep^k {\underline n}$ and by
proving that $t \smallstep^k {\underline n}$ is decidable. Here
$t\smallstep^k{\underline n}$ says that $t$ reduces to $\underline n$ in at most
$k$ steps. A first step towards this is the following, which is a consequence of
soundness and computational adequacy.

\begin{lemma}
  \label{soundnesscompadequacyprop}
  Let $t$ be a PCF term of type $\iota$. We have the following logical
  equivalences
  \[
    \isdefined(\densem{t}) \enspace\longleftrightarrow\enspace \sum_{n : \natt}
    t \smallstep^\ast {\underline{n}} \enspace\longleftrightarrow\enspace
    \squash*{\sum_{n : \natt} t \smallstep^\ast \underline {n}}.
  \]
\end{lemma}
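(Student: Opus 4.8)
The plan is to prove the three-way logical equivalence by establishing two chains of implications, using soundness and computational adequacy as black boxes. The easy direction of the whole chain is that the untruncated $\sum_{n:\natt} t\smallstep^\ast \underline n$ implies both its truncation (by $\tosquash{-}$) and $\isdefined(\densem t)$; the latter because from a pair $(n, p)$ with $p : t\smallstep^\ast \underline n$ we get, by soundness, $\densem t = \densem{\underline n} = \eta(n)$ (using \cref{densemnumeral}), and $\isdefined(\eta(n))$ holds since $\isdefined(\eta(n)) \equiv \unitt$. Conversely, computational adequacy (\cref{Adequacy}) says exactly that $\isdefined(\densem t)$ implies $t\smallstep^\ast \underline{\liftvalue(\densem t)(p)}$ for the given $p$, which yields a term of $\sum_{n:\natt} t\smallstep^\ast \underline n$. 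So $\isdefined(\densem t)$ and $\sum_{n:\natt} t\smallstep^\ast \underline n$ are already logically equivalent, and the former implies the truncated version.

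It remains to close the loop: $\squash*{\sum_{n:\natt} t\smallstep^\ast \underline n}$ implies $\isdefined(\densem t)$ (or, equivalently, implies the untruncated $\Sigma$, but that is false in general without choice — so we aim for $\isdefined(\densem t)$). Here the point is that $\isdefined(\densem t)$ is a \emph{proposition}: it is $\mathsf{pr_1}(\densem t)$, a term of $\Omega$. Therefore, by the universal property of the propositional truncation, to define a map $\squash*{\sum_{n:\natt} t\smallstep^\ast \underline n} \to \isdefined(\densem t)$ it suffices to give a map $\sum_{n:\natt} t\smallstep^\ast \underline n \to \isdefined(\densem t)$, and that is precisely the soundness-based map constructed in the first paragraph. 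This is the crucial use of the fact that we are mapping \emph{into} a proposition, so no choice is needed.

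Assembling: we have $\isdefined(\densem t) \to \sum_{n} t\smallstep^\ast \underline n$ (computational adequacy), $\sum_{n} t\smallstep^\ast \underline n \to \squash*{\sum_{n} t\smallstep^\ast \underline n}$ (truncation unit), and $\squash*{\sum_{n} t\smallstep^\ast \underline n} \to \isdefined(\densem t)$ (truncation recursion into the proposition $\isdefined(\densem t)$, via soundness and \cref{densemnumeral}). This gives a cycle of implications through all three types, hence the stated logical equivalences.

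I do not expect a genuine obstacle here: both hard theorems (soundness and computational adequacy) are already available, and everything else is formal manipulation of propositional truncation. The one subtlety worth stating explicitly in the write-up is why we cannot strengthen the last implication to land in the untruncated $\Sigma$ — we cannot extract a \emph{specific} numeral from a mere existence statement — which is exactly why the statement is phrased as a logical equivalence rather than a type equivalence, and why $\isdefined(\densem t)$ sits comfortably in the middle as the canonical proposition.
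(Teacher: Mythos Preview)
Your proposal is correct and follows essentially the same approach as the paper: computational adequacy gives $\isdefined(\densem{t}) \to \sum_{n} t \smallstep^\ast \underline{n}$, soundness together with \cref{densemnumeral} gives the converse, and the equivalence with the truncated version follows because $\isdefined(\densem{t})$ is a proposition. The paper phrases this last step slightly more tersely (deriving the second equivalence from the first via the proposition property), while you spell out the cycle explicitly, but the content is the same.
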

\begin{proof}[\coqidentproof{Partiality}{PCF}{char_pcf_propositions}]
  We start by proving the first logical equivalence. The second then follows
  from the fact that $\isdefined(\densem{t})$ is a proposition. Suppose $p$ is
  of type $\isdefined(\densem{t})$. By computational adequacy, we find that
  $t \smallstep^\ast {\underline {\liftvalue (\densem{t}) (p)}}$, so we are
  done.

  Conversely, suppose that we are given a natural number $n$ such that
  $t \smallstep^\ast \underline n$. Soundness and
  Proposition~\ref{densemnumeral} then yield $\densem{t} = \eta(n)$. Now
  $\star : \isdefined(\eta(n))$, so we may transport along the equality to get
  an element of $\isdefined(\densem{t})$.
\end{proof}

In order to characterise the propositions arising from PCF terms of base type as
semidecidable, we wish to prove that $t \smallstep^\ast \underline{n}$ is
semidecidable for every PCF term $t$ of type $\iota$ and natural number~$n$. We
do so by proving some more general results, which we present in
Section~\ref{semidec-rel} and Section~\ref{indexedWtypes}. Here, we outline our
general strategy and highlight the main theorems and their applications to the
problem at hand.

Given any (proposition-valued) relation $R$ on a type $X$, we can define the
$k$-step reflexive transitive closure $R^k$ of $R$ and prove that $x R^* y$ if
and only if $\exists_{k : \natt} x R^k y$. Thus we obtain the following
(intermediate) result.
\begin{lemma}
  For every PCF term $t$ of type $\iota$, we have:
  \[
    \isdefined\pa*{\densem{t}} \longleftrightarrow \exists_{n : \natt}\exists_{k
      : \natt} t \smallstep^k {\underline n}.
  \]
\end{lemma}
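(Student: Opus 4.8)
The plan is to chain together the two ingredients the excerpt has set up. By \cref{soundnesscompadequacyprop} we already know that
\[
  \isdefined(\densem{t}) \longleftrightarrow \squash*{\sum_{n : \natt} t \smallstep^\ast \underline n},
\]
so it suffices to establish the logical equivalence
\[
  \squash*{\sum_{n : \natt} t \smallstep^\ast \underline n} \longleftrightarrow \exists_{n : \natt}\exists_{k : \natt}\, t \smallstep^k \underline n.
\]
Since both sides are propositions, this in turn reduces to the untruncated equivalence $\bigl(\sum_{n : \natt} t \smallstep^\ast \underline n\bigr) \longleftrightarrow \sum_{n : \natt}\exists_{k : \natt}\, t \smallstep^k \underline n$, and then, fixing $n$, to the claim $\bigl(t \smallstep^\ast \underline n\bigr) \longleftrightarrow \exists_{k : \natt}\, t \smallstep^k \underline n$. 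This last claim is exactly the general fact alluded to just before the lemma: for any proposition-valued relation $R$ on a type $X$ one has $x R^\ast y \longleftrightarrow \exists_{k : \natt}\, x R^k y$, instantiated at $R \colonequiv {\smallstep}$, $x \colonequiv t$, $y \colonequiv \underline n$. So the proof of the lemma is essentially: combine \cref{soundnesscompadequacyprop} with this general fact about $k$-step closures (which I would cite as the content of \cref{semidec-rel}, to be proved in the next section).

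Concretely, the steps I would carry out, in order, are: (1) recall that $\isdefined(\densem{t})$ is a proposition, being a first projection into $\Prop$; (2) invoke \cref{soundnesscompadequacyprop} to replace $\isdefined(\densem{t})$ by $\squash*{\sum_{n : \natt} t \smallstep^\ast \underline n}$; (3) push the propositional truncation through the $\Sigma$ over $n$ and replace the inner $t \smallstep^\ast \underline n$ by $\exists_{k : \natt}\, t \smallstep^k \underline n$, using the general equivalence $x R^\ast y \longleftrightarrow \exists_{k : \natt} x R^k y$; (4) observe $\squash*{\sum_{n : \natt}\exists_{k : \natt}\, t \smallstep^k \underline n}$ is literally $\exists_{n : \natt}\exists_{k : \natt}\, t \smallstep^k \underline n$ under the notational convention that $\exists$ abbreviates the truncation of $\Sigma$. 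The only genuine mathematical content is the equivalence between the reflexive–transitive closure $R^\ast$ and the union of the $k$-step closures $R^k$, and I would defer its proof to \cref{semidec-rel}; for the direction $x R^\ast y \to \exists_{k} x R^k y$ one does induction on the derivation of $x R_\ast y$ (using that the target is a proposition to strip the truncation in $R^\ast$), handling $\mathsf{refl}$ with $k = 0$, $\mathsf{extend}$ with $k = 1$, and $\mathsf{trans}$ by adding the two witnessing step-counts; the reverse direction is a straightforward induction on $k$.

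The main obstacle, such as it is, is not in this lemma but in what it is a stepping stone towards: one must actually \emph{define} the $k$-step closure $R^k$ correctly and prove the two small inductions cleanly in the truncated setting, being careful that $R^k$ is itself proposition-valued so that the truncations behave. That bookkeeping is exactly what \cref{semidec-rel} is for, so here I would simply state the lemma's proof as a one-line corollary: "Immediate from \cref{soundnesscompadequacyprop}, \cref{semidec-rel}, and the fact that $\isdefined(\densem{t})$ is a proposition." The remaining work in the section proper — showing each $t \smallstep^k \underline n$ is decidable — is orthogonal to this lemma and is handled via the single-valuedness of the operational semantics and the presentation of PCF as an indexed $\mathsf{W}$-type in \cref{indexedWtypes}.
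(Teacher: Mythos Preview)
Your proposal is correct and matches the paper's approach exactly: the paper's proof is the one-liner ``This follows from \cref{soundnesscompadequacyprop} and \cref{refltransassumstep}'', where \cref{refltransassumstep} is precisely the general equivalence $x R^\ast y \longleftrightarrow \squash*{\sum_{k : \natt} x R^k y}$ that you identify as the main ingredient. Your additional unpacking of the truncation bookkeeping and the sketch of how the $R^\ast \leftrightarrow \exists_k R^k$ equivalence is proved is accurate and agrees with what the paper does in \cref{semidec-rel}.
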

\begin{proof}[\coqidentproof{Partiality}{PCF}{char_pcf_propositions'}]
  This follows from Lemma~\ref{soundnesscompadequacyprop} and
  Lemma~\ref{refltransassumstep}.
\end{proof}

Thus, to prove that $s \smallstepclos t$ is semidecidable, it suffices to show
that $s \smallstep^k t$ is decidable for every natural number $k$. To this end,
we prove the following in Section~\ref{semidec-rel}.
\begin{theorem}[Theorem~\ref{decidablekstep}]
  Let $R$ be relation on a type $X$. If
  \begin{enumerate}
  \item\label{assump-1} $X$ has decidable equality;
  \item\label{assump-2} $R$ is single-valued;
  \item\label{assump-3} $\sum_{y : X} x R y$ is decidable for every $x : X$;
  \end{enumerate}
  then, the $k$-step reflexive transitive closure $R^k$ of $R$ is decidable for
  every natural number $k$.
\end{theorem}

Thus, $s \smallstep^k t$ is decidable if it satisfies the assumptions
\eqref{assump-1}--\eqref{assump-3}. Assumptions~\eqref{assump-2}
and~\eqref{assump-3} can be verified by inspection of the small-step operational
semantics once~\eqref{assump-1} has been proved.

Hence, we are to prove that the type of PCF terms has decidable equality. This
can be done fairly directly by induction (as pointed out by one of the anonymous
referees). However, we take it as an opportunity to study (in
Section~\ref{indexedWtypes}) a more general and powerful result on indexed
\(\mathsf{W}\)-types (see Theorem~\ref{indexedWtypedeceq}), which is interesting
in its own right. For now, we take it as proved that the PCF terms have
decidable equality and continue our study of propositions coming from PCF terms
at the base type.

\begin{therm}\label{semidecidablepcfprops}
  The propositions that arise from PCF terms $t$ of type $\iota$ are all
  semidecidable, as witnessed by the following logical equivalence:
  \[
    \isdefined(\densem{t}) \longleftrightarrow \exists_{n : \natt} \exists_{k :
      \natt}\, t \smallstep^k \underline n
  \]
  and the decidability of $t \smallstep^k \underline n$.
\end{therm}

Given this theorem, it is natural to ask whether we can
construct the Scott model of PCF using a restricted version of the lifting
monad. Write $\Omega_{\textup{sd}}$ for the type of propositions that are
semidecidable. Theorem~\ref{semidecidablepcfprops} says that the map
\begin{align*}
  \text{PCF terms of type $\iota$} &\to \Omega \\
  t &\mapsto \isdefined\pa*{\densem{t}}
\end{align*}
factors through $\Omegasd$. Thus, could we also have constructed the Scott model
of PCF using the restricted lifting
$\liftsd(X) \colonequiv \sum_{P : \Omegasd} (P \to X)$?

Of course, $\liftsd(X)$ is not a dcpo, because, recalling our construction of
suprema in $\lift(X)$, given a directed family $u : I \to \liftsd(X)$, the
proposition $\squash*{\sum_{i : I} \isdefined\pa*{u_i}}$ need not be
semidecidable. However, one might think that $\liftsd(X)$ still has suprema of
$\natt$-indexed directed families (which would suffice for the Scott model), but
proving this requires an instance of the axiom of countable choice, cf.\
\cite[Theorem 5.34]{Knapp2018} and \cite[Theorem 5]{EscardoKnapp2017}. Moreover,
$\liftsd$ is a monad if and if only a particular choice principle (which is
implied by countable choice) holds, see \cite[Theorem 3]{EscardoKnapp2017} and
\cite[Section 5.8]{Knapp2018}. In fact, this choice principle is the one
discussed in Section~\ref{sec:relatedwork}; \cite[Theorem 5.28]{Knapp2018}
proves that if $X$ is a set then $\liftsd(X)$ is equivalent to the quotiented
delay monad.

Again, as pointed out in Section~\ref{sec:relatedwork}, the problem is that this
choice principle cannot be proved in constructive univalent type theory.

\subsection{Decidability of the \texorpdfstring{$k$}{k}-step reflexive
  transitive closure of a relation}
\label{semidec-rel}
In this section we provide sufficient conditions on a relation for its $k$-step
reflexive transitive closure to be decidable. The purpose of this section is to
prove Theorem~\ref{decidablekstep}, whose use we have explained above.

\begin{definition}[\coqident{Foundations}{Sets}{hrel}]
  A \emph{relation on $X$} is a term of type $X \to X \to \Prop$.
\end{definition}
\begin{definition}
  [\coqident{MoreFoundations}{ClosureOfHrel}{refltransclos_step},
  \coqident{MoreFoundations}{ClosureOfHrel}{refltransclos_step_hrel}] Let $R$ be
  a relation on a type $X$. We wish to define the $k$-step reflexive transitive
  closure of $R$. As in Definition~\ref{def:refltransclos}, we want this to be
  proposition-valued again. Therefore, we proceed as follows. For any natural
  number $k$, define $x R_{k} y$ by induction on $k$:
  \begin{enumerate}
  \item $x R_{0}y \colonequiv x = y$;
  \item $x R_{k+1}z \colonequiv \sum_{y : X} x R y \times y R_{k} z$.
  \end{enumerate}
  The \emph{$k$-step reflexive transitive closure $R^k$ of $R$} is now defined
  as the relation on $X$ given by ${x R^k y \colonequiv \squash{ x R_k y}}$.
\end{definition}

We wish to prove that $x R^\ast y$ if and only if
$\squash*{\sum_{k : \natt} x R^k y}$. The following lemma is the first step
towards that.

\begin{lemma}
  Let $R$ be a relation on $X$. Recall the untruncated reflexive transitive
  closure $R_\ast$ from Definition~\ref{def:refltransclos}. We have a logical
  equivalence for every $x,y$ in $X$\textup{:}
  \[
    x R_\ast y \longleftrightarrow \sum_{k : \natt} x R_{k} y.
  \]
\end{lemma}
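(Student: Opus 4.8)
The plan is to prove both directions of the logical equivalence by induction, being careful to unfold the definitions of the untruncated closure $R_\ast$ (the inductive type with constructors $\mathsf{extend}$, $\mathsf{refl}$, $\mathsf{trans}$ from \cref{def:refltransclos}) and of $R_k$ (the recursively-defined relation with $R_0$ being equality and $R_{k+1}$ being a one-step $\Sigma$ followed by $R_k$).

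For the backward direction, I would first show that for each fixed $k$ we have $\prod_{x,y} x R_k y \to x R_\ast y$, by induction on $k$. In the base case $k \equiv 0$ we have $x R_0 y \equiv (x = y)$, and transporting along this path reduces to $\mathsf{refl}$. In the step case we are given $x R_{k+1} z$, i.e.\ some $y$ with $x R y$ and $y R_k z$; applying $\mathsf{extend}$ to $x R y$ gives $x R_\ast y$, the induction hypothesis gives $y R_\ast z$, and $\mathsf{trans}$ glues them. Then the map $\sum_{k : \natt} x R_k y \to x R_\ast y$ is obtained by unpacking the pair and applying the above. For the forward direction I would induct on the proof term $p : x R_\ast y$. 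The $\mathsf{refl}$ case produces $(0, \mathsf{refl}_x)$. The $\mathsf{extend}$ case, given $x R y$, produces $(1, (y, (xRy, \mathsf{refl}_y)))$ witnessing $x R_1 y$. The $\mathsf{trans}$ case is the one requiring a small auxiliary fact: from $\sum_k x R_k y$ and $\sum_\ell y R_\ell z$ we must produce $\sum_m x R_m z$, and the natural choice is $m \equiv k + \ell$. This needs a concatenation lemma $\prod_{k,\ell} x R_k y \to y R_\ell z \to x R_{k+\ell} z$, itself proved by induction on $k$ (base case: transport along $x = y$; step case: peel off the first $R$-step and recurse), mirroring list concatenation.

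The main obstacle — really the only non-routine point — is that both sides of the stated equivalence are \emph{untruncated} $\Sigma$-types, not propositions, so I cannot invoke the universal property of propositional truncation anywhere and I genuinely have to produce explicit terms by recursion on the inductive witnesses. This is slightly delicate because $R_\ast$ may have several distinct inhabitants and the associativity and unit laws of the concatenation lemma relative to $\mathsf{trans}$/$\mathsf{extend}$ are not on the nose; however, since we only assert a \emph{logical} equivalence (just back-and-forth maps, not an equivalence of types), none of these coherences matter and a plain structural recursion suffices. I would also note that this lemma is precisely what is needed, after truncating, to get $x R^\ast y \leftrightarrow \squash{\sum_{k} x R^k y}$, which is the form used in \cref{refltransassumstep} and hence in the semidecidability results of \cref{sec:charprop}.
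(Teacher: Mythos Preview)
Your proof is correct. Both directions go through exactly as you describe, and your observation that only a logical equivalence is claimed (so no coherences are needed) is the right one.

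The paper takes a different route. Instead of proving a concatenation lemma $x R_k y \to y R_\ell z \to x R_{k+\ell} z$ to handle the $\mathsf{trans}$ constructor, it introduces an auxiliary inductive relation $R'$ with constructors
\[
\mathsf{refl'} : \prod_{x : X} x R' x
\qquad\text{and}\qquad
\mathsf{left} : \prod_{x,y,z : X} x R y \to y R' z \to x R' z,
\]
i.e.\ the ``cons-list'' presentation of the reflexive transitive closure. One then shows $R_\ast \leftrightarrow R'$ (by checking that $R'$ is reflexive, transitive, and extends $R$) and, separately, $R' \leftrightarrow \sum_{k}R_k$ (both are cons-lists, one length-indexed and one not, so this is immediate by induction on $k$ in one direction and on the $R'$-derivation in the other). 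The non-trivial step in the paper's factorisation is proving that $R'$ is transitive, which is morally the same induction you do for your concatenation lemma. So the total work is comparable; the paper's detour buys a cleaner match between $R'$ and $R_k$ at the cost of an extra intermediate object, while your direct approach keeps everything in terms of the two relations actually mentioned in the statement but needs the explicit arithmetic on step-counts.
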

\begin{proof}
  [Proof \textnormal{(\coqident{MoreFoundations}{ClosureOfHrel}{stepleftequiv},
    \coqident{MoreFoundations}{ClosureOfHrel}{left_regular_equiv})}]
  Define $x R' y$ inductively by:
  \begin{alignat*}{2}
    &\mathsf{refl'} &&: \prod_{x : X} x R' x; \\
    &\mathsf{left} &&: \prod_{x y z : X} x R y \to y R' z \to x R' z.
  \end{alignat*}
  It is not hard to verify that $R'$ is reflexive, transitive and that it
  extends $R$. Using this, one shows that $x R' y$ and $x R_\ast y$ are
  logically equivalent for every $x,y : X$. Now one easily proves
  $\prod_{k : \natt} (x R_k y \to x R' y)$ by induction on $k$. This yields
  $\pa*{\sum_{k : \natt} x R_k y} \to x R' y$. The converse is also easily
  established. Thus, $x R' y$ and $\sum_{k : \natt} x R_k y$ are logically
  equivalent, finishing the proof.
\end{proof}

The next lemma extends the previous one to the propositional truncations.
\begin{lemma}
  \label{refltransassumstep}
  Let $R$ be a relation on $X$. For every $x,y : X$, we have a logical
  equivalence:
  \[
    x R^\ast y \longleftrightarrow \squash*{\sum_{k : \natt} x R^k y}.
  \]
\end{lemma}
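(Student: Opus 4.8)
The plan is to exploit that both sides of the claimed equivalence are propositions — each being a propositional truncation — so it suffices to construct functions in both directions, and while doing so we may freely strip truncations via the recursion principle of $\tosquash{-}$. I will call the immediately preceding lemma the \emph{step lemma}: it gives, for all $x,y : X$, an (untruncated) logical equivalence $x R_\ast y \leftrightarrow \sum_{k : \natt} x R_{k} y$.

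For the left-to-right direction, assume $x R^\ast y$, that is, $\squash{x R_\ast y}$. Since the goal $\squash*{\sum_{k : \natt} x R^k y}$ is a proposition, we may assume an actual term of $x R_\ast y$. Applying the step lemma yields a pair $(k,p)$ with $k : \natt$ and $p : x R_{k} y$. Then $\tosquash{p} : x R^k y$, so $(k,\tosquash{p}) : \sum_{k : \natt} x R^k y$, and truncating gives the desired element of $\squash*{\sum_{k : \natt} x R^k y}$. For the right-to-left direction, assume $\squash*{\sum_{k : \natt} x R^k y}$. The goal $x R^\ast y \colonequiv \squash{x R_\ast y}$ is a proposition, so we may assume a pair $(k,q)$ with $k : \natt$ and $q : x R^k y \colonequiv \squash{x R_{k} y}$; as the goal is still a proposition, from $q$ we may further assume an actual term $q' : x R_{k} y$. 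Then $(k,q') : \sum_{k : \natt} x R_{k} y$, so the step lemma produces a term of $x R_\ast y$, whose truncation is the required element of $x R^\ast y$.

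I do not expect a real obstacle here. The only thing to keep track of is the nesting of truncations in $\squash*{\sum_{k : \natt} x R^k y}$ — a truncation of a $\Sigma$-type whose second component is itself a truncation — but since in each direction the type we ultimately need to inhabit is a proposition, every truncation can be peeled off by the recursion principle without any appeal to choice. All the mathematical content is already carried by the step lemma; this lemma simply transports that equivalence through $\tosquash{-}$, i.e.\ uses that propositional truncation preserves logical equivalences.
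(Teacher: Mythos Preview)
Your proof is correct and is essentially the same argument as the paper's, just unrolled: the paper invokes functoriality of $\tosquash{-}$ on the step lemma and then the general equivalence $\squash*{\sum_{k}A_k}\leftrightarrow\squash*{\sum_{k}\squash{A_k}}$ (citing \cite[Theorem~7.3.9]{HoTTbook}), whereas you carry out both of these moves by hand via the recursion principle of truncation. The content is identical.
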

\begin{proof}
  [Proof
  \textnormal{(\coqident{MoreFoundations}{ClosureOfHrel}{stepleftequiv_hrel},
    \coqident{MoreFoundations}{ClosureOfHrel}{left_regular_equiv})}.]
  Let $x$ and $y$ be in $X$. By the previous lemma and functoriality of
  propositional truncation, we have
  \[
    x R^\ast y \equiv \squash{x R_\ast y} \longleftrightarrow \squash*{\sum_{k :
        \natt} x R_k y}.
  \]
  But the latter is equivalent to
  $\squash*{\sum_{k : \natt} \squash*{x R_k y}} \equiv \squash*{\sum_{k : \natt}
    x R^k y}$ by \cite[Theorem 7.3.9]{HoTTbook}. This may also be proved
  directly, as done in the formalisation.
\end{proof}

\begin{definition}[\coqident{MoreFoundations}{ClosureOfHrel}{is_singlevalued}]
  A relation $R$ on $X$ is said to be \emph{single-valued} if for every
  $x,y,z : X$ with $x R y$ and $x R z$ we have $y = z$.
\end{definition}

\begin{definition}[\coqident{MoreFoundations}{ClosureOfHrel}{isdecidable_hrel}]
  A relation $R$ on $X$ is said to be \emph{decidable} if the type $x R y$ is
  decidable for every $x$ and $y$ in $X$.
\end{definition}

\begin{lemma}
  \label{decidablesquash}
  Let $X$ be a type. If $X$ is decidable, then so is $\squash{X}$.
\end{lemma}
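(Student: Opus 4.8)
The plan is to case split on the assumed decidability of $X$, i.e.\ on the given term of $X + \lnot X$, and to produce a term of $\squash{X} + \lnot\squash{X}$ in each case. In the first case we are handed some $x : X$; then $\tosquash{x} : \squash{X}$, and we output $\inl(\tosquash{x})$. In the second case we are handed a function $n : X \to \emptyt$ (a proof of $\lnot X$), and we wish to output $\inr$ applied to a proof of $\lnot\squash{X}$, i.e.\ a function $\squash{X} \to \emptyt$.

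To construct this function, the key observation is that $\emptyt$ is a proposition, so the universal property of the propositional truncation applies: the map $n : X \to \emptyt$ factors (uniquely) through $\tosquash{-} : X \to \squash{X}$, and the factoring map $\squash{X} \to \emptyt$ is exactly the proof of $\lnot\squash{X}$ we need. There is essentially no obstacle in this argument; the only subtlety worth flagging is that for the negative case one must invoke the universal property of the truncation (rather than attempt to eliminate out of $\squash{X}$ into an arbitrary type), and this is legitimate precisely because the target $\emptyt$ is a subsingleton.
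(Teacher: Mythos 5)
Your proposal is correct and matches the paper's proof: both case-split on the given term of $X + \lnot X$, handle the positive case by applying $\tosquash{-}$, and handle the negative case by eliminating out of $\squash{X}$ into $\emptyt$, which is licensed precisely because $\emptyt$ is a proposition. The paper phrases this last step as ``we may actually assume that we have $x : X$'', which is exactly your invocation of the universal property of the truncation.
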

\begin{proof}[\coqidentproof{Foundations}{Propositions}{decidable_ishinh}]
  Suppose that $X$ is decidable. Then there are two cases to consider. Either we
  have $x : X$ or $\lnot X$. If we have $x : X$, then obviously we have
  $|x| : \squash{X}$.

  So suppose that $\lnot X$. We claim that $\lnot\squash{X}$. Assuming
  $\squash{X}$, we must find a term of type~$\emptyt$. But $\emptyt$ is a
  proposition, so we may actually assume that we have $x : X$. Using $\lnot X$,
  we then obtain~$\emptyt$, as desired.
\end{proof}

\begin{therm}
  \label{decidablekstep}
  Let $R$ be relation on a type $X$. If
  \begin{enumerate}
  \item\label{cond-1} $X$ has decidable equality;
  \item\label{cond-2} $R$ is single-valued;
  \item\label{cond-3} $\sum_{y : X} x R y$ is decidable for every $x : X$;
  \end{enumerate}
  then, the $k$-step reflexive transitive closure $R^k$ of $R$ is decidable for
  every natural number $k$.
\end{therm}
\begin{proof}[\coqidentproof{MoreFoundations}{ClosureOfHrel}{decidable_step}]
  Suppose $X$ and $R$ satisfy conditions \eqref{cond-1}--\eqref{cond-3}. By
  Lemma~\ref{decidablesquash}, it suffices to prove that the untruncated version
  of $R^k$, that is $R_k$, is decidable by induction on $k$.

  For the base case, let $x$ and $y$ be elements of $X$. We need to decide
  $x R_0 y$. By definition this means deciding $x = y$, which we can, since $X$
  is assumed to have decidable equality.

  Now suppose $x$ and $z$ are elements of $X$ and that $a R_k b$ is decidable
  for every $a,b : X$. We need to show that $xR_{k+1}z$ is decidable. By
  definition this means that we must prove
  \begin{equation*}\label{toprovedec}
    \sum_{y : X} x R y \times y R_k z \tag{$\ast$}
  \end{equation*}
  to be decidable. By~\eqref{cond-3}, we can decide $\sum_{y : X}x R y$. Obviously, if we
  have $\lnot\sum_{y : X}x R y$, then $\lnot\eqref{toprovedec}$. So assume that
  we have $y : X$ such that $x R y$. By induction hypothesis, $y R_k z$ is
  decidable. If we have $y R_k z$, then we get $\eqref{toprovedec}$. So suppose
  that $\lnot y R_k z$. We claim that $\lnot\eqref{toprovedec}$. For suppose
  $\eqref{toprovedec}$, then we obtain $y' : X$ with $x R y'$ and $y' R_k
  z$. But $R$ is single-valued, so $y = y'$ and hence, $y R_k z$, contradicting
  our assumption.
\end{proof}

\subsection{Decidable equality and indexed
  \texorpdfstring{$\mathsf{W}$}{W}-types}
\label{indexedWtypes}

We wish to prove that a certain class of indexed \(\mathsf{W}\)-types has
decidable equality. Indexed \(\mathsf{W}\)-types are a generalisation of
$\mathsf{W}$-types that allows for many-sorted terms. One may consult
\cite[Section 5.3]{HoTTbook} for an explanation of regular
$\mathsf{W}$-types. The PCF terms form a natural example of an indexed
$\mathsf{W}$-type, where the sorts will be the formal types of PCF terms. We
apply the general result for indexed \(\mathsf{W}\)-types to see that the PCF
terms have decidable equality.

\subsubsection{PCF terms as an indexed \texorpdfstring{$\mathsf{W}$}{W}-type}
In this section we explain what indexed $\mathsf{W}$-types are and how PCF terms
can encoded as such an indexed $\mathsf{W}$-type.
\begin{definition}[\coqident{MoreFoundations}{Wtypes}{indexedWtype}]
  Let $A$ and $I$ be types and let $B$ be a type family over $A$. Suppose we
  have $t : A \to I$ and $s : \pa*{\sum_{a : A} B(a)} \to I$. The \emph{indexed
    $\mathsf W$-type $\mathsf{W}_{s,t}$ specified by $s$~and~$t$} is the
  inductive type family over $I$ generated by the following constructor:
  \[
    \mathsf{indexedsup} : \prod_{a : A}\big(B(a) \to
    \mathsf{W}_{s,t}(s(a,b))\big) \to \mathsf{W}_{s,t}(t(a)).
  \]
  We have the following induction principle for indexed $\mathsf{W}$-types. If
  $E : \prod_{i : I}\pa*{\mathsf{W}_{s,t}(i) \to \mathcal U}$, then to prove
  $\prod_{i : I}\prod_{w : \mathsf{W}_{s,t}(i)} E(i,w)$, it suffices to show
  that for any $a : A$ and $f : \prod_{b:B(a)}\mathsf{W}_{s,t}(s(a,b))$
  satisfying $E(s(a,b),f(b))$ for every $b : B(a)$ (the \emph{induction
    hypothesis}), we have a term of type $E(t(a),\mathsf{indexedsup}(a,f))$.
\end{definition}

Just as with regular $\mathsf{W}$-types, we can think of indexed
$\mathsf{W}$-types as encoding a particular class of inductive types. In this
interpretation, $A$ encodes the constructors of the inductive type, whereas $B$
encodes the arity of each constructor. However, each constructor has a ``sort''
given by $t(a) : I$. Given a constructor $a : A$ and a label of an argument
$b : B(a)$, the sort of this argument is given by $s(a,b)$.

\begin{example}\label{termsasindexedWtype}
  In this example, we show that a fragment of the PCF terms can be encoded as an
  indexed $\mathsf{W}$-type. One could extend the encoding to capture all PCF
  terms, but we do not spell out the tedious details here, as a fragment
  suffices to get the idea across.

  The type family $\mathsf{T}$ is inductively defined as:
  \begin{enumerate}
  \item $\mathsf{zero}$ is a term of type $\iota$;
  \item $\mathsf{succ}$ is a term of type $\iota \Rightarrow \iota$;
  \item for every PCF type $\sigma$ and $\tau$, we have a term
    $\mathsf{app}_{\sigma,\tau}$ of type
    $(\sigma \Rightarrow \tau) \Rightarrow \sigma \Rightarrow \tau$.
  \end{enumerate}
  We can encode $\mathsf{T}$ as an indexed $\mathsf{W}$-type. Let us write
  $\twot$ for $\unitt + \unitt$ and $0_{\twot}$ and $1_{\twot}$
  for its elements. Take $I$ to be the type of PCF types and put
  $A \colonequiv \twot + (I \times I)$. Define $B : A \to \mathcal U$ by
  \[
    B(\inl(0_{\twot})) \colonequiv B(\inl(1_{\twot}))
    \colonequiv \emptyt\quad\text{and}\quad B(\inr(\sigma,\tau))
    \colonequiv \twot.
  \]
  Finally, define $t$ by
  \[
    t(\inl(0_{\twot})) \colonequiv \iota;\quad
    t(\inl(1_{\twot})) \colonequiv \iota \Rightarrow \iota;\quad
    \text{and}\quad t(\inr(\sigma,\tau)) \colonequiv \tau;
  \]
  and $s$ by
  \[
    s(\inr(\sigma,\tau),0_{\twot}) \colonequiv \sigma \Rightarrow
    \tau;\quad\text{and}\quad s(\inr(\sigma,\tau),1_{\twot})
    \colonequiv \sigma;
  \]
  on the other elements $s$ is defined as the unique function from $\emptyt$.

  One can check that given a PCF type $\sigma : I$, there is a type equivalence
  $T(\sigma) \simeq W_{s,t}(\sigma)$.
\end{example}

\subsubsection{Indexed \texorpdfstring{$\mathsf{W}$}{W}-types with decidable equality}
We wish to isolate some conditions on the parameters of an indexed
$\mathsf{W}$-type that are sufficient to conclude that an indexed
$\mathsf{W}$-type has decidable equality. We first need a few definitions before
we can state the theorem.

\begin{definition}[\texttt{WeaklyCompactTypes} in \cite{TypeTopology},
  \coqident{MoreFoundations}{Wtypes}{picompact}]
  \label{picompact}
  A type $X$ is called \emph{$\Pi$-compact} when every type family $Y$ over $X$
  satisfies: if $Y(x)$ is decidable for every $x : X$, then so is the dependent
  product $\prod_{x : X} Y(x)$.
\end{definition}

\begin{example}
  [\coqident{MoreFoundations}{Wtypes}{picompact_empty},
  \coqident{MoreFoundations}{Wtypes}{picompact_unit}]
  \label{emptyunitpicompact}
  The empty type $\emptyt$ is vacuously $\Pi$-compact. The unit type $\unitt$ is
  also easily seen to be $\Pi$-compact. There are also interesting examples of
  infinite types that are $\Pi$-compact, such as $\natt_{\infty}$, the one-point
  compactification of the natural numbers
  \cite[\texttt{WeaklyCompactTypes}]{TypeTopology}.
\end{example}

We are now in position to state the general theorem about decidable equality on
indexed $\mathsf{W}$-types.

\begin{therm}
  \label{indexedWtypedeceq'}
  Let $A$ and $I$ be types and $B$ a type family over $A$. Suppose $t : A \to I$
  and $s : \pa*{\sum_{a : A}B(a)} \to I$. If $A$ has decidable equality, $B(a)$
  is $\Pi$-compact for every $a : A$ and $I$ is a set, then
  $\mathsf{W}_{s,t}(i)$ has decidable equality for every $i : I$.
\end{therm}

The proof of Theorem~\ref{indexedWtypedeceq'} is quite technical, so we postpone
it until Section~\ref{proofofindexedWtypedeceq}. Instead, we next describe how
to apply the theorem to prove that the PCF terms have decidable equality.

\subsubsection{PCF terms have decidable equality}
In this section we show that the PCF terms have decidable equality by applying
Theorem~\ref{indexedWtypedeceq'}.  Before we proceed, we record some useful
lemmas.
\begin{lemma}
  \label{decidableiff}
  Let $X$ and $Y$ be logically equivalent types. The type $X$ is decidable if
  and only if $Y$ is decidable.
\end{lemma}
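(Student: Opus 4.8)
The plan is to prove this by a direct case analysis on whether we have a proof of $X$, a proof of $Y$, a refutation of $X$, or a refutation of $Y$, using the logical equivalence to transport between the two sides. More precisely, I would first unfold the definition of decidability: $X$ decidable means $X + \lnot X$, and similarly for $Y$. I would name the two directions of the logical equivalence, say $f : X \to Y$ and $g : Y \to X$.

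First I would prove the forward implication: suppose $X$ is decidable, so we have a term of $X + \lnot X$; I case-split on it. In the left case we have $x : X$, and then $f(x) : Y$ gives us the left injection into $Y + \lnot Y$. In the right case we have a refutation $n : \lnot X$; then $n \circ g : \lnot Y$, giving the right injection. This establishes $Y$ decidable. The backward implication is entirely symmetric, using $g$ in place of $f$ (and $f$ in place of $g$ for the refutation case). Since the two directions are mirror images, I would only write one out in detail and remark that the other is dual; alternatively I might factor out a single helper observation that a logical equivalence $X \leftrightarrow Y$ yields both $X \to Y$ and $\lnot X \to \lnot Y$, and then the proof is just two applications of the same pattern.

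There is essentially no obstacle here — this is one of the ``trivial to prove'' lemmas the paper flags in advance. The only thing worth a moment's care is getting the direction of composition right in the refutation case (composing the refutation with the \emph{other} direction of the equivalence), but that is immediate. I would keep the write-up to a few lines.

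\begin{proof}[\coqidentproof{MoreFoundations}{Wtypes}{logeq_dec}]
  By symmetry it suffices to prove one direction, so suppose $X$ is decidable
  and let $f : X \to Y$ and $g : Y \to X$ witness the logical equivalence.
  Since $X$ is decidable, we have either a term $x : X$ or a refutation
  $n : \lnot X$. In the first case, $f(x) : Y$, so $Y$ is (left-)decidable. In
  the second case, $n \circ g : \lnot Y$, so $Y$ is (right-)decidable. Either
  way, $Y$ is decidable. The converse is entirely analogous, using $g$ in place
  of $f$.
\end{proof}
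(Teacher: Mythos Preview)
Your proof is correct and is exactly the routine case analysis the paper has in mind; the paper's own proof is simply the word ``Straightforward.'' The only cosmetic discrepancy is the Coq identifier, which in the paper is \texttt{decidable\_iff} rather than \texttt{logeq\_dec}.
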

\begin{proof}[\coqidentproof{MoreFoundations}{Wtypes}{decidable_iff}]
  Straightforward.
\end{proof}

\begin{definition}
  A type $X$ is called a \emph{retract} of a type $Y$ if there are maps
  $s : X \to Y$ (the~\emph{section}) and $r : Y \to X$ (the \emph{retraction})
  such that $\prod_{x : X} r(s(x)) = x$.
\end{definition}
\begin{lemma}
  \label{retractdeceq}
  Let $X$ be a retract of $Y$. If $Y$ has decidable equality, then so does $X$.
\end{lemma}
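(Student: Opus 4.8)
The plan is to use the section--retraction maps to transport the decidability question from $Y$ to $X$. Write $s : Y \to X$ for the section and $r : X \to Y$ for the retraction, so that $r(s(y)) = y$ for every $y : Y$. Given $y_1, y_2 : Y$, I want to decide $y_1 = y_2$. The key observation is that $s$ is (split) injective: if $s(y_1) = s(y_2)$, then applying $r$ and using the retraction equations gives $y_1 = r(s(y_1)) = r(s(y_2)) = y_2$.

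First I would invoke decidable equality of $X$ to decide $s(y_1) = s(y_2)$, producing two cases. In the positive case, the injectivity argument just described yields $y_1 = y_2$ directly, by applying $\mathsf{ap}$ with $r$ and then rewriting along the two instances of the retraction homotopy. In the negative case, I would argue by contraposition: from a hypothetical path $p : y_1 = y_2$ one obtains $\mathsf{ap}\,s\,(p) : s(y_1) = s(y_2)$, contradicting the assumption; hence $\lnot(y_1 = y_2)$. Combining the two cases gives that $y_1 = y_2$ is decidable, and since $y_1, y_2$ were arbitrary, $Y$ has decidable equality.

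This proof is entirely routine and there is essentially no obstacle; the only point requiring a little care is that the retraction data only gives $r \circ s \sim \mathrm{id}_Y$ and not a full equivalence, but that one-sided homotopy is exactly what is needed for the injectivity step, and no assumption on truncation levels of $X$ or $Y$ is required.
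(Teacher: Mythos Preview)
Your proof is correct and takes essentially the same approach as the paper's: apply the section to the two points of $Y$, decide equality in $X$, and use the retraction homotopy to conclude in the positive case and contraposition via $\mathsf{ap}$ of the section in the negative case. (The paper's written proof happens to swap the names $r$ and $s$ relative to its own definition, but the argument is the same as yours.)
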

\begin{proof}[\coqidentproof{MoreFoundations}{Wtypes}{isdeceq_retract}]
  Let $r : Y \to X$ and $s : X \to Y$ be respectively the retraction and section
  establishing $X$ as a retract of $Y$. Let $a,b : X$. Since $Y$ has decidable
  equality, we can consider two cases: $r(a) = r(b)$ and $r(a) \neq r(b)$.
  In the first case, we find $a = s(r(a)) = s(r(b)) = b$. In the second case, we
  immediately see that $a \neq b$. This finishes the proof.
\end{proof}

\begin{lemma}
  \label{coproductpicompact}
  The $\Pi$-compact types are closed under binary coproducts.
\end{lemma}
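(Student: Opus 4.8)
The plan is to show that if $X$ and $Y$ are $\Pi$-compact, then so is $X + Y$. So suppose we are given a type family $Z : X + Y \to \mathcal U$ such that $Z(c)$ is decidable for every $c : X + Y$; we must show $\prod_{c : X + Y} Z(c)$ is decidable. The key observation is the standard equivalence
\[
  \prod_{c : X + Y} Z(c) \;\simeq\; \pa*{\prod_{x : X} Z(\inl(x))} \times \pa*{\prod_{y : Y} Z(\inr(y))},
\]
which follows from the induction principle for $+$ together with function extensionality. By \cref{decidableiff} it therefore suffices to show that the right-hand side is decidable, and since decidable types are closed under binary products, it suffices to show each factor is decidable separately.

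For the first factor, consider the family $Z' : X \to \mathcal U$ defined by $Z'(x) \colonequiv Z(\inl(x))$. For every $x : X$, the type $Z'(x)$ is decidable by assumption on $Z$, so by $\Pi$-compactness of $X$ the type $\prod_{x : X} Z'(x) \equiv \prod_{x : X} Z(\inl(x))$ is decidable. Symmetrically, using $\Pi$-compactness of $Y$ applied to the family $x \mapsto Z(\inr(y))$, the type $\prod_{y : Y} Z(\inr(y))$ is decidable. Combining the two via closure of decidable types under products gives decidability of the product, and hence of $\prod_{c : X+Y} Z(c)$.

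There is no real obstacle here; the only thing to be slightly careful about is that the equivalence $\prod_{c : X+Y} Z(c) \simeq \pa*{\prod_{x : X} Z(\inl(x))} \times \pa*{\prod_{y : Y} Z(\inr(y))}$ genuinely requires function extensionality (to see that a dependent function out of $X + Y$ is determined by its two restrictions, up to homotopy), which is available in our framework. One could also avoid invoking the equivalence explicitly and instead work directly: given decisions of both $\prod_{x : X} Z(\inl(x))$ and $\prod_{y : Y} Z(\inr(y))$, a term of $\prod_{c : X+Y} Z(c)$ is assembled by $+$-induction from the two components, and conversely a negation of either restriction immediately yields a negation of the total product. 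Either presentation is routine, so I would simply cite \cref{decidableiff} and the two applications of \cref{picompact}.
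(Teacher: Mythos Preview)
Your proof is correct and follows essentially the same route as the paper: restrict the family along $\inl$ and $\inr$, apply $\Pi$-compactness of $X$ and $Y$ to each restriction, use that decidable types are closed under binary products, and conclude via \cref{decidableiff} and the (logical) equivalence between $\prod_{c:X+Y}Z(c)$ and the product of the two restricted $\Pi$-types. One small remark: only a \emph{logical} equivalence is needed for \cref{decidableiff}, and that direction does not actually require function extensionality; your invocation of funext is harmless but unnecessary.
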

\begin{proof}[\coqidentproof{MoreFoundations}{Wtypes}{picompact_coprod}]
  Let $X$ and $Y$ be $\Pi$-compact types. Suppose $F$ is a type family over
  $X + Y$ such that $F(z)$ is decidable for every $z : X + Y$. We must show that
  $\prod_{z : X + Y}F(z)$ is decidable.

  Define $F_X : X \to \mathcal U$ by $F_X(x) \colonequiv F(\inl(x))$ and
  $F_Y : Y \to\mathcal U$ as $F_Y(y) \colonequiv F(\inr(y))$. By our
  assumption on $F$, the types $F_X(x)$ and $F_Y(y)$ are decidable for every
  $x : X$ and $y : Y$. Hence, since $X$ and $Y$ are assumed to be $\Pi$-compact,
  the dependent products $\prod_{x : X}F_X(x)$ and $\prod_{y : Y}F_Y(y)$ are
  decidable.

  Finally, $\prod_{z : X + Y}F(z)$ is logically equivalent to
  $\prod_{x : X}F_X(x) \times \prod_{y : Y}F_Y(y)$. Since the product of two
  decidable types is again decidable, an application of Lemma~\ref{decidableiff}
  now finishes the proof.
\end{proof}

Finally, let us see how to apply Theorem~\ref{indexedWtypedeceq'} to see that
the PCF terms have decidable equality.
\begin{therm}
  The PCF terms have decidable equality.
\end{therm}
\begin{proof}
  As with Example~\ref{termsasindexedWtype}, we only spell out the details for
  the fragment $\mathsf{T}$. Recall that $\mathsf{T}$ may be encoded as a
  $\mathsf{W}$-type, indexed by the PCF types. Using
  Example~\ref{emptyunitpicompact} and Lemma~\ref{coproductpicompact}, we see
  that $B(a)$ is $\Pi$-compact for every $a : A$. Note that $A$ has decidable
  equality if $I$ does. So it remains to prove that $I$, the type of PCF types,
  has decidable equality.

  This will be another application of Theorem~\ref{indexedWtypedeceq'}. Define
  $A' \colonequiv \twot$ and define $B' : A' \to \mathcal U$ by
  $B'(\inl(\star)) \colonequiv \emptyt$ and $B'(\inr(\star)) \colonequiv
  \twot$. Let $t'$ and $s'$ be the unique functions to $\unitt$ from $A'$ and
  $\sum_{x : A'}B'(x)$, respectively. One quickly verifies that the type of PCF
  types is a retract of $\mathsf{W}_{s',t'}(\star)$. Observe that $B'(x)$ is
  $\Pi$-compact for every $x : A'$ because of Example~\ref{emptyunitpicompact}
  and Lemma~\ref{coproductpicompact}.  Finally, $\unitt$ and $A' \equiv 2$
  clearly have decidable equality, so by Theorem~\ref{indexedWtypedeceq'} the
  type $\mathsf{W}_{s',t'}(\star)$ has decidable equality. Thus, by
  Lemma~\ref{retractdeceq}, so do the PCF types.
\end{proof}

\subsubsection{Proof of Theorem~\texorpdfstring{\ref{indexedWtypedeceq'}}{Theorem 66}}
\label{proofofindexedWtypedeceq}

In this section we prove Theorem~\ref{indexedWtypedeceq'} by deriving it as a
corollary of another result, namely Theorem~\ref{indexedWtypedeceq} below. This
result seems to have been first established by Jasper Hugunin, who reported on
it in a post on the Homotopy Type Theory mailing
list~\cite{HuguninMail2017}. Our proof of Theorem~\ref{indexedWtypedeceq} is a
simplified written-up account of Hugunin's Coq code
\cite[\texttt{FiberProperties.v}]{Hugunin2017}.

\begin{definition}
  [Definition 2.4.2 in \cite{HoTTbook}, \coqident{Foundations}{PartA}{hfiber}]
  Let $f : X \to Y$ be a map. The \emph{fiber of $f$ over a point $y : Y$} is
  \[
    \mathsf{fib}_f(y) \colonequiv \sum_{x : X}(f(x) = y).
  \]
\end{definition}

\begin{therm}
  [Jasper Hugunin]
  \label{indexedWtypedeceq}
  Let $A$ and $I$ be types and $B$ a type family over $A$. Suppose $t : A \to I$
  and $s : \pa*{\sum_{a : A}B(a)} \to I$. If $B(a)$ is $\Pi$-compact for every
  $a : A$ and the fiber of $t$ over $i$ has decidable equality for every
  $i : I$, then $\mathsf{W}_{s,t}(i)$ also has decidable equality for every
  $i : I$.
\end{therm}

Let us see how to obtain Theorem~\ref{indexedWtypedeceq'} from
Theorem~\ref{indexedWtypedeceq}.
\begin{proof}[Proof of Theorem~\ref{indexedWtypedeceq'} (using Theorem~\ref{indexedWtypedeceq}) \textnormal{(\coqident{MoreFoundations}{Wtypes}{indexedWtype_deceq'})}]
  Suppose that $A$ has decidable equality and $I$ is a set. We are to show that
  the fiber of $t$ over $i$ has decidable equality for every $i : I$. Let
  $i : I$ be arbitrary. Suppose we have $(a,p)$ and $(a',p')$ in the fiber of
  $t$ over $i$. Since $A$ has decidable equality, we can decide whether $a$ and
  $a'$ are equal or not. If they are not, then certainly $(a,p) \neq
  (a',p')$. If they are, then we claim that the dependent pairs $(a,p)$ and
  $(a',p')$ are also equal. If $e : a = a'$ is the supposed equality, then it
  suffices to show that $\transport^{\lambda x : A.t(x) = i}(e,p) = p'$,
  but both these terms are paths in $I$ and $I$ is a set, so they must be equal.
\end{proof}

We now embark on a proof of Theorem~\ref{indexedWtypedeceq}. For the remainder of this
section, let us fix types $A$ and $I$, a type family $B$ over $A$ and maps
$t : A \to I$ and $s : \pa*{\sum_{a : A} B(a)} \to I$.

We do not prove the theorem directly. The statement makes it
impossible to assume two elements $u,v : \mathsf{W}_{s,t}(i)$ and proceed by
induction on \emph{both} $u$ and $v$. Instead, we will state and prove a more
general result that is amenable to a proof by induction. But first, we need more
general lemmas and some definitions.

\begin{lemma}
  \label{rightpairinj}
  Let $X$ be a type and let $Y$ be a type family over it. If $X$ is a set, then
  the right pair function is injective, in the following sense: if
  $(x,y) = (x,y')$ as terms of $\sum_{a : X}Y(a)$, then $y = y'$.
\end{lemma}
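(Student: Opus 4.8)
The plan is to reduce the statement to the standard characterisation of paths in a $\Sigma$-type. By Theorem 2.7.2 of \cite{HoTTbook}, a path $(x,y) = (x,y')$ in $\sum_{a : X} Y(a)$ corresponds to a pair $(p, q)$ with $p : x = x$ and $q : \mathsf{transport}(p, y) = y'$. So from the hypothesis $(x,y) = (x,y')$ I would first extract such $p$ and $q$.

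The key observation is that since $X$ is a set, the type $x = x$ is a proposition, hence $p = \mathsf{refl}_x$. Transporting $q$ along this equality — and using that $\mathsf{transport}(\mathsf{refl}_x, y)$ is judgementally $y$ — yields the desired identification $y = y'$. Equivalently, one may phrase the argument via path induction: it suffices to construct a function of type $(p : x = x) \to (\mathsf{transport}(p, y) = y') \to (y = y')$, and since $x = x$ is a proposition we may replace $p$ by $\mathsf{refl}_x$ throughout, at which point the second argument is literally a proof of $y = y'$.

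I do not expect any genuine obstacle: the proof is routine $\Sigma$-type path algebra, the only non-formal ingredient being that a set has propositional identity types. (Note in particular that one cannot simply appeal to $\sum_{a : X} Y(a)$ being a set, since that would additionally require each $Y(a)$ to be a set; the decomposition above is precisely what makes the lone hypothesis ``$X$ is a set'' sufficient.)
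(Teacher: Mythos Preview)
Your proposal is correct and follows essentially the same argument as the paper: decompose the $\Sigma$-path into $e_1 : x = x$ and $e_2 : \mathsf{transport}(e_1,y) = y'$, use that $X$ is a set to identify $e_1$ with $\mathsf{refl}_x$, and read off $y = y'$ from $e_2$.
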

\begin{proof}[\coqidentproof{MoreFoundations}{Wtypes}{dec_depeq}]
  Suppose $X$ is a set, $x : X$ and $y,y' : Y(x)$ with $e : (x,y) =
  (x,y')$. From $e$, we obtain $e_1 : x = x$ and
  $e_2 : \transport^{Y}(e_1,y) = y'$. Since $X$ is a set, we must have
  that $e_1 = \mathsf{refl}_{x}$, so that from $e_2$ we obtain a term of type
  $y \equiv \transport^Y(\mathsf{refl}_{x},y) = y'$, as desired.
\end{proof}

\begin{definition}[\coqident{MoreFoundations}{Wtypes}{subtrees}]
  For each $i : I$, define
  \[
    \mathsf{sub}_i : \mathsf{W}_{s,t} (i) \to \sum_{p:\mathsf{fib}_t(i)}
    \prod_{b : B(\fst(p))}\mathsf{W}_{s,t}( s(\fst(p),b))
  \]
  by induction:
  \[
    \mathsf{sub}_{t(a)}(\mathsf{indexedsup}(a,f)) \colonequiv
    \pa*{\pa*{a,\mathsf{refl}_{t(a)}},f}.
  \]
  For notational convenience, we will omit the subscript of $\mathsf{sub}$.
\end{definition}

\begin{lemma}
  \label{eqtosubtreeeq}
  Let $a : A$ and $f,g : \prod_{b : B(a)} \mathsf{W}_{s,t}(s(a,b))$. If the
  fiber of $t$ over $i$ has decidable equality for every $i : I$, then
  $\mathsf{indexedsup}(a,f) = \mathsf{indexedsup}(a,g)$ implies $f = g$.
\end{lemma}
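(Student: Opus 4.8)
The plan is to exploit the function $\mathsf{sub}$ introduced just above, which extracts from an element of $\mathsf{W}_{s,t}(t(a))$ both its ``root label'' (as an element of $\mathsf{fib}_t(t(a))$) and its family of immediate subtrees. The key observation is purely computational: by the defining equation of $\mathsf{sub}$ we have $\mathsf{sub}(\mathsf{indexedsup}(a,f)) \equiv \pa*{\pa*{a,\mathsf{refl}_{t(a)}},f}$, so applying $\mathsf{sub}$ turns an equality of the form $\mathsf{indexedsup}(a,f) = \mathsf{indexedsup}(a,g)$ into an equality of dependent pairs with \emph{identical} first components, from which we can hope to cancel the first component.

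Concretely, suppose $e : \mathsf{indexedsup}(a,f) = \mathsf{indexedsup}(a,g)$. Applying $\mathsf{ap}\,\mathsf{sub}$ to $e$ and unfolding the definition of $\mathsf{sub}$ yields
\[
  \pa*{\pa*{a,\mathsf{refl}_{t(a)}},f} = \pa*{\pa*{a,\mathsf{refl}_{t(a)}},g}
\]
as terms of $\sum_{p : \mathsf{fib}_t(t(a))}\prod_{b : B(a)}\mathsf{W}_{s,t}(s(\mathsf{pr_1}(p),b))$. By hypothesis the fiber of $t$ over every point of $I$ has decidable equality; in particular $\mathsf{fib}_t(t(a))$ has decidable equality and is therefore a set (by Hedberg's theorem, i.e.\ decidable equality implies being a set). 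We are thus in a position to invoke \cref{rightpairinj} with base type $X \colonequiv \mathsf{fib}_t(t(a))$ and type family $Y(p) \colonequiv \prod_{b : B(a)}\mathsf{W}_{s,t}(s(\mathsf{pr_1}(p),b))$: from the displayed equality of pairs with common first component $\pa*{a,\mathsf{refl}_{t(a)}}$ we conclude $f = g$, noting that $Y\pa*{\pa*{a,\mathsf{refl}_{t(a)}}}$ is definitionally $\prod_{b : B(a)}\mathsf{W}_{s,t}(s(a,b))$, which is exactly the common type of $f$ and $g$.

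I do not anticipate a genuine obstacle here, since the substantive work has already been carried out in setting up $\mathsf{sub}$ and \cref{rightpairinj}. The only points requiring a moment's care are (a) upgrading the ``decidable equality of $\mathsf{fib}_t(t(a))$'' hypothesis to the sethood hypothesis that \cref{rightpairinj} demands, and (b) checking that the dependent family over $\mathsf{fib}_t(t(a))$ is arranged so that its value at $\pa*{a,\mathsf{refl}_{t(a)}}$ really is the type of $f$ and $g$, so that the conclusion of \cref{rightpairinj} is literally the desired $f = g$ rather than an equality only up to transport.
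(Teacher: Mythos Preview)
Your proposal is correct and follows essentially the same approach as the paper: apply $\mathsf{sub}$ to obtain an equality of dependent pairs with identical first component, invoke Hedberg's theorem to see that $\mathsf{fib}_t(t(a))$ is a set, and then apply \cref{rightpairinj}. The paper's proof is just a terser rendition of the same argument.
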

\begin{proof}[\coqidentproof{MoreFoundations}{Wtypes}{subtrees_eq}]
  Suppose $\mathsf{indexesup}(a,f) = \mathsf{indexedsup}(a,g)$. Then
  \[
    \pa*{\pa*{a,\mathsf{refl}_{t(a)}},f} \equiv
    \mathsf{sub}(\mathsf{indexedsup}(a,f)) =
    \mathsf{sub}(\mathsf{indexedsup}(a,g)) \equiv
    \pa*{\pa*{a,\mathsf{refl}_{t(a)}},g}.
  \]
  As \(\mathsf{fib}_t(i)\) is decidable, it is a set by
  Hedberg's~Theorem~\cite[Theorem~7.2.5]{HoTTbook}. Therefore \(f = g\) by
  Lemma~\ref{rightpairinj}.
\end{proof}

\begin{definition}[\coqident{MoreFoundations}{Wtypes}{getfib}]
  \label{getfib}
  For every $i : I$, define a function
  $\mathsf{getfib}_i : \mathsf{W}_{s,t}(i) \to \mathsf{fib}_t(i)$ inductively by
  \[
    \mathsf{getfib}_{t(a)}(\mathsf{indexedsup}(a,f)) \colonequiv
    (a,\mathsf{refl}_{t(a)}).
  \]
  In future use, we omit the subscript of $\mathsf{getfib}$.
\end{definition}
\begin{lemma}
  \label{getfibtransport}
  Let $i,j : I$ with a path $p : i = j$ and $w : \mathsf{W}_{s,t}(i)$. We have
  the following equality:
  \[
    \mathsf{getfib}(\transport^{\mathsf{W}_{s,t}}(p,w)) =
    (\fst(\mathsf{getfib}(w)),\snd(\mathsf{getfib}(w)) \pathcomp p).
  \]
\end{lemma}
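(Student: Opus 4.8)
The plan is to prove the equality by path induction on $p$, followed by the induction principle for indexed $\mathsf{W}$-types applied to $w$ with the (non-dependent) motive $\mathsf{fib}_t$. There is no real difficulty here; the work consists entirely of unfolding definitions and observing that transport and $\mathsf{getfib}$ have the expected definitional behaviour.

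First I would apply path induction on $p : i = j$, reducing to the case where $j$ is $i$ and $p$ is $\mathsf{refl}_i$. In this case $\mathsf{transport}^{\mathsf{W}_{s,t}}(p,w)$ is definitionally $w$, so the goal becomes
\[
  \mathsf{getfib}(w) = (\mathsf{pr_1}(\mathsf{getfib}(w)),\,\mathsf{pr_2}(\mathsf{getfib}(w)) \pathcomp \mathsf{refl}_i).
\]
Next I would perform the indexed $\mathsf{W}$-type induction on $w : \mathsf{W}_{s,t}(i)$, i.e.\ write $w$ as $\mathsf{indexedsup}(a,f)$ for some $a : A$ and $f : \prod_{b : B(a)} \mathsf{W}_{s,t}(s(a,b))$, so that $i$ becomes definitionally $t(a)$ and, by the defining clause of $\mathsf{getfib}$ (\cref{getfib}), $\mathsf{getfib}(w)$ becomes definitionally $(a,\mathsf{refl}_{t(a)})$. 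The left-hand side is then $(a,\mathsf{refl}_{t(a)})$, while the right-hand side is $(a,\mathsf{refl}_{t(a)} \pathcomp \mathsf{refl}_{t(a)})$, which reduces to $(a,\mathsf{refl}_{t(a)})$. Hence the goal holds by $\mathsf{refl}$.

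The only point requiring care is the definitional behaviour of $\mathsf{transport}$ along $\mathsf{refl}$ and of $\mathsf{getfib}$ on $\mathsf{indexedsup}$; doing the $\mathsf{W}$-induction on $w$ is what makes both sides compute to the same canonical pair. One could instead, after the path induction on $p$, appeal to the $\Sigma$-type $\eta$-rule together with the right unit law $q \pathcomp \mathsf{refl} = q$ for path composition, but the $\mathsf{W}$-induction avoids invoking either of these. The order of the two inductions is immaterial: one may equally induct on $w$ first and then on the resulting path $p : t(a) = j$, arriving at the same reflexivity.
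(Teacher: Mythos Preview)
Your proof is correct and follows the same approach as the paper, whose proof is simply ``By path induction on $p$.'' You supply more detail than the paper does---in particular the follow-up $\mathsf{W}$-induction on $w$ (whose motive, incidentally, is the equality you are proving, not $\mathsf{fib}_t$) to make both sides reduce definitionally rather than appealing to $\Sigma$-$\eta$ and the right unit law---but the core idea is identical.
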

\begin{proof}[\coqidentproof{MoreFoundations}{Wtypes}{getfib_transport}]
  By path induction on $p$.
\end{proof}

We are now in position to state and prove the lemma from which
Theorem~\ref{indexedWtypedeceq} follows.
\begin{lemma}
  \label{indexedWtypedeceqtransport}
  Suppose that $B(a)$ is $\Pi$-compact for every $a :A $ and that the fiber of
  $t$ over each $i : I$ has decidable equality. For any $i : I$,
  $u : \mathsf{W}_{s,t}(i)$, $j : I$, path $p : i = j$ and
  $v : \mathsf{W}_{s,t}(j)$, the type
  \[
    \transport^{\mathsf{W}_{s,t}}(p,u) = v
  \]
  is decidable.
\end{lemma}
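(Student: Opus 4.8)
The plan is to prove decidability of $\mathsf{transport}^{\mathsf{W}_{s,t}}(p,u) = v$ by a double induction, first on $u$ and then on $v$; this is exactly what the transport in the statement buys us, since with the index $i$ fixed one cannot induct on a second element $v : \mathsf{W}_{s,t}(i)$, whereas here $j$ and the path $p : i = j$ are allowed to vary. I would take as motive for the induction on $u$ the family assigning to $(i,u)$ the proposition ``for all $j : I$, all $p : i = j$ and all $v : \mathsf{W}_{s,t}(j)$, the type $\mathsf{transport}^{\mathsf{W}_{s,t}}(p,u) = v$ is decidable''. This reduces us to the case $u \equiv \mathsf{indexedsup}(a,f)$ with $i \equiv t(a)$, under the induction hypothesis that for every $b : B(a)$, every $j$, every $p : s(a,b) = j$ and every $v : \mathsf{W}_{s,t}(j)$, the type $\mathsf{transport}^{\mathsf{W}_{s,t}}(p,f(b)) = v$ is decidable. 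Now fix $j$, $p : t(a) = j$ and $v : \mathsf{W}_{s,t}(j)$ and perform a second induction on $v$ (with the evident motive, namely ``for all $p : t(a) = j$, the type $\mathsf{transport}^{\mathsf{W}_{s,t}}(p,\mathsf{indexedsup}(a,f)) = v$ is decidable''; no induction hypothesis from this second induction is needed). We are left in the case $v \equiv \mathsf{indexedsup}(a',g)$, $j \equiv t(a')$, with $p : t(a) = t(a')$, $f : \prod_{b : B(a)}\mathsf{W}_{s,t}(s(a,b))$ and $g : \prod_{b : B(a')}\mathsf{W}_{s,t}(s(a',b))$.

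At this point $(a,p)$ and $(a',\mathsf{refl}_{t(a')})$ are both elements of $\mathsf{fib}_t(t(a'))$, which has decidable equality by hypothesis, so we may decide whether $(a,p) = (a',\mathsf{refl}_{t(a')})$. If they are \emph{unequal}, I claim $\mathsf{transport}^{\mathsf{W}_{s,t}}(p,\mathsf{indexedsup}(a,f)) \neq \mathsf{indexedsup}(a',g)$: applying $\mathsf{getfib}$ to a putative equality, the right-hand side computes definitionally to $(a',\mathsf{refl}_{t(a')})$, while \cref{getfibtransport} together with $\mathsf{getfib}(\mathsf{indexedsup}(a,f)) \equiv (a,\mathsf{refl}_{t(a)})$ rewrites the left-hand side to $(a,\mathsf{refl}_{t(a)}\pathcomp p)$, which by the left unit law equals $(a,p)$; so we would obtain $(a,p) = (a',\mathsf{refl}_{t(a')})$, a contradiction. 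This settles the branch where the head constructors disagree.

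In the remaining branch we have a path $q : (a,p) = (a',\mathsf{refl}_{t(a')})$ in $\mathsf{fib}_t(t(a'))$. Decomposing $q$ into its two components and doing path induction, we may assume $a' \equiv a$ and $p \equiv \mathsf{refl}_{t(a)}$, so that the goal becomes decidability of $\mathsf{indexedsup}(a,f) = \mathsf{indexedsup}(a,g)$ with $f,g : \prod_{b : B(a)}\mathsf{W}_{s,t}(s(a,b))$. By \cref{eqtosubtreeeq} such an equality implies $f = g$, and $f = g$ trivially implies it, so by \cref{decidableiff} it suffices to decide $f = g$; applying function extensionality and \cref{decidableiff} once more, it suffices to decide $\prod_{b : B(a)}\pa*{f(b) = g(b)}$. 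For each $b : B(a)$ the type $f(b) = g(b)$ is decidable: instantiate the induction hypothesis from the induction on $u$ at this $b$, at $j \colonequiv s(a,b)$, $p \colonequiv \mathsf{refl}_{s(a,b)}$ and $v \colonequiv g(b)$, using $\mathsf{transport}^{\mathsf{W}_{s,t}}(\mathsf{refl},f(b)) \equiv f(b)$. Since $B(a)$ is $\Pi$-compact, the dependent product $\prod_{b : B(a)}(f(b) = g(b))$ is then decidable, which finishes the proof.

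The genuinely substantial ideas here are already isolated for us: the transport in the statement is what unblocks the second induction, and the reduction of equality of two $\mathsf{indexedsup}$-nodes to equality of their families of subtrees is \cref{eqtosubtreeeq} (resting on \cref{rightpairinj} and Hedberg's theorem). What remains, and what I expect to be the only real fiddliness, is getting the two induction motives stated so that both inductions typecheck, and correctly shuffling the transports and path inductions in the ``heads agree'' branch — in particular keeping track of $\mathsf{getfib}$, $\mathsf{sub}$ and the $\Sigma$-path characterisation.
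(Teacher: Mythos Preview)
Your proposal is correct and follows essentially the same approach as the paper's proof: a double induction on $u$ and then $v$, a case split on whether $(a,p)$ and $(a',\mathsf{refl}_{t(a')})$ agree in the fiber using $\mathsf{getfib}$ and \cref{getfibtransport} for the negative branch, and in the positive branch reducing via path induction and \cref{eqtosubtreeeq} to deciding $\prod_{b:B(a)} f(b) = g(b)$ by the outer induction hypothesis and $\Pi$-compactness. The only cosmetic difference is that you package the final step via \cref{decidableiff} and function extensionality, whereas the paper case-splits directly on the decidability of $\prod_{b:B(a')} f(b) = f'(b)$.
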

\begin{proof}[\coqidentproof{MoreFoundations}{Wtypes}{indexedWtype_deceq_transport}]
  Suppose $i : I$ and $u : \mathsf{W}_{s,t}(i)$. We proceed by induction on $u$
  and so we assume that $u \equiv \mathsf{indexedsup}(a,f)$. The induction
  hypothesis reads:
  \begin{equation*}\label{IH}
    \prod_{b : B(a)}\prod_{j' : I}\prod_{p' : s(a,b) = j'}\prod_{v' :
      \mathsf{W}_{s,t}(j')} (\transport^{\mathsf{W}_{s,t}}(p',f(b))=v')
    \text{ is decidable }. \tag{$\ast$}
  \end{equation*}
  Suppose we have $j : I$ with path $p : t(a) = j$ and $v : \mathsf{W}_{s,t}(j)$.
  By induction, we may assume that $v \equiv \mathsf{indexedsup}(a',f')$. We are
  tasked to show that
  \begin{equation*}\label{indexedsupdec}
    \transport^{\mathsf{W}_{s,t}}(p, \mathsf{indexedsup}(a,f)) =
    \mathsf{indexedsup}(a',f') \tag{$\dagger$}
  \end{equation*}
  is decidable, where $p : t(a) = t(a')$.

  By assumption the fiber of $t$ over $t(a')$ has decidable equality. Hence, we
  can decide if $\pa*{a',\mathsf{refl}_{t(a')}}$ and $(a,p)$ are equal or
  not. Suppose first that the pairs are not equal. We claim that in this case
  $\lnot \eqref{indexedsupdec}$. For suppose we had $e : \eqref{indexedsupdec}$,
  then
  \[
    \mathsf{ap}_{\mathsf{getfib}}(e) :
    \mathsf{getfib}(\transport^{\mathsf{W}_{s,t}}(p,
    \mathsf{indexedsup}(a,f))) = \mathsf{getfib}(\mathsf{indexedsup}(a',f')).
  \]
  By definition, the right hand side is $(a',\mathsf{refl}_{t(a')})$. By
  Lemma~\ref{getfibtransport}, the left hand side is equal to
  $(a, \mathsf{refl}_{t(a)} \pathcomp p)$ which is in turn equal to $(a,p)$,
  contradicting our assumption that $\pa*{a',\mathsf{refl}_{t(a')}}$ and $(a,p)$
  were not equal.

  Now suppose that $\pa*{a',\mathsf{refl}_{t(a')}} = (a,p)$. From this, we
  obtain paths $e_1 : a' = a$ and
  $e_2~:~\transport^{\lambda x : A.t(x) =
    t(a')}\pa*{e_1,\mathsf{refl}_{t(a')}} = p$. By path induction, we may assume
  $e_1 \equiv \mathsf{refl}_{a'}$, so that from $e_2$ we obtain a path
  \[
    \rho : \mathsf{refl}_{t(a')} = p.
  \]
  Using this path, we see that the left hand side of \eqref{indexedsupdec} is
  equal to $\mathsf{indexedsup}(a',f)$, so we are left to show that
  \[
    \mathsf{indexedsup}(a',f) = \mathsf{indexedsup}(a',f')
  \]
  is decidable.

  By induction hypothesis \eqref{IH} and the fact that $a \equiv a'$, the type
  $ f(b) = f'(b) $ is decidable for every $b : B(a')$. Since $B(a')$ is
  $\Pi$-compact, this implies that $\prod_{b : B(a')}f(b) = f'(b)$ is decidable.

  Suppose first that $\prod_{b : B(a')}f(b) = f'(b)$. Function extensionality
  then yields $f = f'$, so that
  $\mathsf{indexedsup}(a',f) = \mathsf{indexedsup}(a',f')$.

  On the other hand, suppose $\lnot\prod_{b : B(a')}f(b) = f'(b)$. We claim that
  then, $\mathsf{indexedsup}(a',f)$ cannot be equal to
  $\mathsf{indexedsup}(a',f')$. For suppose that
  $\mathsf{indexedsup}(a',f) = \mathsf{indexedsup}(a',f')$. Then
  Lemma~\ref{eqtosubtreeeq} yields $f = f'$, contradicting our assumption that
  $\lnot\prod_{b : B(a)}f(b) = f'(b)$, and finishing the proof.
\end{proof}

\begin{proof}[Proof of Theorem~\ref{indexedWtypedeceq} \textnormal{(\coqident{MoreFoundations}{Wtypes}{indexedWtype_deceq})}]
  Let $i : I$ and $u,v : \mathsf{W}_{s,t}(i)$. Taking $j \colonequiv i$ and
  $p \colonequiv \mathsf{refl}_i$ in Lemma~\ref{indexedWtypedeceqtransport}, we
  see that $u = v$ is decidable, as desired.
\end{proof}

\section{Size matters}\label{sizematters}
In this penultimate section, we explain some of the subtleties regarding dcpos
and universe levels. In particular, we revisit the dcpo of continuous functions
while rigorously keeping track of universe levels. In the end, our analysis
shows that, even in the absence of propositional resizing, the interpretation
function $\densem{-}$ of the Scott model is well-defined
(Theorem~\ref{interpretationtyped}). (For more on predicative domain theory, the reader
may wish to consult our recent work~\cite{deJongEscardo2021a,deJongEscardo2021b}.)

As mentioned in the introduction, our results are formalised in Agda
\cite[\texttt{PCFModules}]{TypeTopology}.

To study universe levels, let us suppose that we have a tower of type universes
$\mathcal U_0 : \mathcal U_1 : \dots$, indexed by meta natural numbers. (In the
end, it will turn out that having just two universes
$\mathcal U_0 : \mathcal U_1$ is sufficient for our purposes.) Let us fix some
notation for (raising) universe levels. We write $\mathcal U_i^+$ for
$\mathcal U_{i+1}$ and $\mathcal U_i \sqcup \mathcal U_j$ for
$U_{\max(i,j)}$. The universes are assumed to be closed under $+$-, $\Sigma$-
and $\Pi$-types and if $X : \mathcal U$ and $Y : X \to \mathcal V$, then
$\sum_{x : X}Y(x),\prod_{x : X}Y(x) : \mathcal U \sqcup \mathcal V$. Finally,
since $\mathcal U : \mathcal U^+$, we have
$\sum_{X : \mathcal U}Y(X) : \mathcal U^+ \sqcup \mathcal V$ if
$Y : \mathcal U \to \mathcal V$.

\subsection{The lifting}
In Section~\ref{sec:overview}, we introduced $\Omega$ as the type of propositions in
the universe $\mathcal U_0$. To see why we made this particular choice of type
universe and to appreciate the considerations involved, it is helpful to
consider a more general situation. Let us write $\Omega_{\mathcal T}$ for the
propositions in some type universe $\mathcal T$. Define the (generalised)
lifting $\lift_{\mathcal T}(X)$ of a type $X$ is as
$\lift_{\mathcal T}(X) \colonequiv \sum_{P : \Prop_{\mathcal {T}}} (P \to X)$.

Now observe that if $X$ is a type in a universe $\mathcal U$, then lifting
(potentially) raises the universe level, as $\lift_{\mathcal T}(X)$ is a type in
universe $\mathcal{T}^+ \sqcup \mathcal U$. However, if $X$ happens to be a type
in $\mathcal T^+$, then $\lift_{\mathcal T} (X)$ also lives in $\mathcal {T}^+$.
Moreover, repeated applications of $\lift$ do not raise the universe level any
further, because if $X$ is in $\mathcal T^+ \sqcup \mathcal U$, then
$\lift_{\mathcal T}(X)$ is as well. Despite the fact that lifting raises the
universe level, one can write down the monad laws for $\lift_{\mathcal T}$ and
they typecheck.

Let $X$ and $I$ be types in universes $\mathcal U$ and $\mathcal V$,
respectively. Suppose that $u : I \to \lift_{\mathcal T} (X)$. Note that
$\sum_{i : I} \isdefined(u_i)$ is in $\mathcal V \sqcup \mathcal T$. When
considering $\lift_{\mathcal T}(X)$ as a dcpo
(cf. Theorem~\ref{liftofsetisdcpo}), we want $\sum_{i : I} \isdefined(u_i)$ to
be in $\mathcal T$ again. One way to ensure this, is to take $\mathcal V$ to be
$\mathcal U_0$. This would make $\lift_{\mathcal T}(X)$ a $\mathcal
U_0$-dcpo. Indeed, this is what we prove in the Agda formalisation. In
particular, this means that $\lift_{\mathcal T}(X)$ has $\natt$-indexed directed
suprema, which suffices for the Scott model of PCF.

\subsection{The dcpo of continuous functions}

In fact, we should be even more precise when it comes universe levels and dcpos
than we have been so far. Write
$\mathcal W\text{-}\pa*{\mathsf{DCPO}_\bot}_{\mathcal U,\mathcal V}$ for the
type of $\mathcal W$-directed complete posets with a least element whose
underlying type is in $\mathcal U$ and whose underlying order takes values in
$\mathcal V$.

Then $\lift_{\mathcal U_0}(\natt) \equiv \lift(\natt)$ is of type
$\mathcal U_0\text{-}\pa*{\mathsf{DCPO}_\bot}_{\mathcal U_1,\mathcal U_1}$, for
example. (One easily checks that the order $\sqsubseteq$ from
Theorem~\ref{liftofsetisdcpo} has values in $\mathcal U_1$.)

Recall that
$\densem{\sigma \Rightarrow \tau} \equiv \densem{\tau}^{\densem{\sigma}}$, the
dcpo with $\bot$ of continuous functions from $\densem{\sigma}$ to
$\densem{\tau}$, so let us investigate the universe levels surrounding the
exponential. In general, we have:
\begin{equation*}\label{universes-exponential}
  \text{if } \mathcal D : \mathcal W\text{-}\pa*{\mathsf{DCPO}_\bot}_{\mathcal
    U,\mathcal V} \text{ and } \mathcal E : \mathcal
  W\text{-}\pa*{\mathsf{DCPO}_\bot}_{\mathcal U',\mathcal V'}, \text{then }
  {\mathcal E}^{\mathcal D} : \mathcal
  W\text{-}\pa*{\mathsf{DCPO}_\bot}_{\mathcal W^+\sqcup\mathcal V\sqcup \mathcal
    V'\sqcup \mathcal U\sqcup\mathcal U',\,\mathcal U\sqcup\mathcal
    V'}. \tag{$\dagger$}
\end{equation*}
We explain the universe levels involved as follows.

Let $\mathcal D$ be of type
$\mathcal W\text{-}\pa*{\mathsf{DCPO}_\bot}_{\mathcal U,\mathcal V}$ and write
$D$ and $\leq_{\mathcal D}$ for its underlying type and order,
respectively. Further, let $\mathcal E$ be of type
$\mathcal W\text{-}\pa*{\mathsf{DCPO}_\bot}_{\mathcal U',\mathcal V'}$ and write
$E$ and $\leq_{\mathcal E}$ for its underlying type and order, respectively.

The underlying type of the exponential $\mathcal E^{\mathcal D}$ is the type of
functions from $D$ to $E$ that are continuous. The underlying order is the
pointwise order: if $f$ and $g$ are continuous functions from $D$ to $E$, then
$f \leq_{\mathcal{E^D}} g$ if $\prod_{x : D} f(x) \leq_{\mathcal E} g(x)$.

Because $D$ is in $\mathcal U$ and $\leq_{\mathcal E}$ takes values in
$\mathcal V'$, we see that $\leq_{\mathcal{E^D}}$ takes values in
$\mathcal U\sqcup \mathcal V'$.

Furthermore, the type of functions from $D$ to $E$ is in
$\mathcal U\sqcup \mathcal U'$. But the type of \emph{continuous} functions also
mentions $\leq_{\mathcal D}$ and $\leq_{\mathcal E}$ and \emph{all} directed
families indexed by a type in $\mathcal W$. In particular, the latter means that
the definition of the type of continuous functions
contains~$\prod_{I : \mathcal W}$. Therefore the type of continuous functions is
in
$\mathcal W^+\sqcup \mathcal V\sqcup\mathcal V'\sqcup\mathcal U\sqcup \mathcal
U'$.

\subsection{The Scott model of PCF}
Given the increasing universe levels in \eqref{universes-exponential}, one might
ask if there can be universes $\mathcal U,\mathcal V,\mathcal W$ such that
\[
  \densem{-} : \text{PCF types} \to \mathcal
  U\text{-}\pa*{\mathsf{DCPO}_\bot}_{\mathcal V,\mathcal W}
\]
typechecks.

As we mentioned,
$\lift_{\mathcal U_0}(\natt) \equiv \lift (\natt) : \mathcal
U_0\text{-}\pa*{\mathsf{DCPO}_\bot}_{\mathcal U_1,\mathcal U_1}$. Since,
$\densem{\iota} \equiv \lift(\natt)$, one would hope that
\[
  \densem{-} : \text{PCF types} \to \mathcal
  U_0\text{-}\pa*{\mathsf{DCPO}_\bot}_{\mathcal U_1,\mathcal U_1}.
\]
And indeed, this is the case.

\begin{therm}\label{interpretationtyped}
  The interpretation function $\densem{-}$ from PCF types to dcpos with $\bot$
  can be typed~as:
  \[
    \densem{-} : \textup{PCF types} \to \mathcal
    U_0\text{-}\pa*{\mathsf{DCPO}_\bot}_{\mathcal U_1,\mathcal U_1}.
  \]
\end{therm}
\begin{proof}
  If, in \eqref{universes-exponential}, we take $\mathcal W$ to be
  $\mathcal U_0$ and $\mathcal U,\mathcal U',\mathcal V,\mathcal V'$ all to be
  $\mathcal U_1$, then \eqref{universes-exponential} reads:
  \[
    (-)^{(-)} : \mathcal U_0\text{-}\pa*{\mathsf{DCPO}_\bot}_{\mathcal
      U_1,\mathcal U_1} \to \mathcal
    U_0\text{-}\pa*{\mathsf{DCPO}_\bot}_{\mathcal U_1,\mathcal U_1} \to \mathcal
    U_0\text{-}\pa*{\mathsf{DCPO}_\bot}_{\mathcal U_1,\mathcal U_1},
  \]
  as desired.
\end{proof}

\section{Conclusion and future work}\label{conclusion}
Our development confirms that univalent type theory is well adapted to the
constructive formalisation of domain-theoretic denotational semantics of
programming languages like PCF, which was the original goal of this
investigation. Moreover, our development is predicative. In particular, we have
given a predicative version of directed complete posets. Our results show that
partiality in univalent type theory via lifting works well. We rely crucially on
Voevodsky's treatment of subsingletons as truth values. In particular, the
propositional truncation plays a fundamental and interesting role in this
work. Finally, we saw an interesting application of the abstract theory of
indexed $\mathsf{W}$-types in characterising the propositions that come from PCF
terms of the base type.

Regarding the Scott model of PCF, there are two questions for future research:
\begin{enumerate}
\item Is there a natural extension of the map
  $\densem{\iota} \xrightarrow{\fst} \Omega$ to all PCF types? Can we
  characterise the propositions at types other than $\iota$, e.g.\ the
  propositions at type $\iota \Rightarrow \iota$? Are they still semidecidable?
\item How can we better understand the fact that only semidecidable
  propositions occur for the Scott model, but that restricting to such
  propositions somehow needs a weak form of countable choice?
\end{enumerate}
In \cite{deJongEscardo2021a} we develop domain theory further in predicative and
constructive univalent type theory, including continuous and algebraic dcpos,
ideal completions and Scott's famous \(D_\infty\). Complementing this work, the
paper~\cite{deJongEscardo2021b} explores some aspects of domain theory that
cannot be done predicatively.

\bibliographystyle{plain}
\bibliography{references}

\begin{thebibliography}{10}

\bibitem{AbramskyJung1994}
S.~Abramsky and A.~Jung.
\newblock Domain theory.
\newblock In S.~Abramsky, D.~M. Gabbay, and T.~S.~E. Maibaum, editors, {\em
  Handbook of Logic in Computer Science}, volume~3, pages 1--168. Clarendon
  Press, 1994.
\newblock Updated online version available at:
  \url{https://www.cs.bham.ac.uk/~axj/pub/papers/handy1.pdf}.

\bibitem{AltenkirchDanielssonKraus2017}
Thorsten Altenkirch, Nils~Anders Danielsson, and Nicolai Kraus.
\newblock Partiality, revisited: The partiality monad as a quotient
  inductive-inductive type.
\newblock In Javier Esparza and Andrzej~S. Murawski, editors, {\em Foundations
  of Software Science and Computation Structures}, volume 10203 of {\em Lecture
  Notes in Computer Science}, pages 534--549. Springer, 2017.
\newblock
  \href{https://dx.doi.org/10.1007/978-3-662-54458-7_31}{doi:10.1007/978-3-662-54458-7\_31}.

\bibitem{BentonKennedyVarming2009}
Nick Benton, Andrew Kennedy, and Carsten Varming.
\newblock Some domain theory and denotational semantics in {Coq}.
\newblock In Stefan Berghofer, Tobias Nipkow, Christian Urban, and Makarius
  Wenzel, editors, {\em Theorem Proving in Higher Order Logics}, volume 5674 of
  {\em Lecture Notes in Computer Science}, pages 115--130. Springer Berlin
  Heidelberg, 2009.
\newblock
  \href{https://dx.doi.org/10.1007/978-3-642-03359-9_10}{doi:10.1007/978-3-642-03359-9\_10}.

\bibitem{Bishop1967}
Errett Bishop.
\newblock {\em Foundations of Constructive Analysis}.
\newblock McGraw-Hill Book Company, 1967.

\bibitem{BridgesRichman1987}
Douglas Bridges and Fred Richman.
\newblock {\em Varieties of Constructive Mathematics}, volume~97 of {\em London
  Mathematical Society Lecture Note Series}.
\newblock Cambridge University Press, 1987.

\bibitem{Capretta2005}
Venanzio Capretta.
\newblock General recursion via coinductive types.
\newblock {\em Logical Methods in Computer Science}, 1(2), 2005.
\newblock
  \href{https://dx.doi.org/10.2168/LMCS-1(2:1)2005}{doi:10.2168/LMCS-1(2:1)2005}.

\bibitem{ChapmanUustaluVeltri2017}
James Chapman, Tarmo Uustalu, and Niccol\`o Veltri.
\newblock Quotienting the delay monad by weak bisimilarity.
\newblock {\em Mathematical Structures in Computer Science}, 29(1):67--92,
  2017.
\newblock
  \href{https://dx.doi.org/10.1017/S0960129517000184}{doi:10.1017/S0960129517000184}.

\bibitem{Coquand2018}
Thierry Coquand.
\newblock A survey of constructive presheaf models of univalence.
\newblock {\em {ACM SIGLOG} News}, 5(3):54--65, 2018.

\bibitem{CoquandMannaaRuch2017}
Thierry {Coquand}, Bassel {Mannaa}, and Fabian {Ruch}.
\newblock Stack semantics of type theory.
\newblock In {\em 2017 32nd Annual ACM/IEEE Symposium on Logic in Computer
  Science (LICS)}, pages 1--11, 6 2017.
\newblock
  \href{https://dx.doi.org/10.1109/LICS.2017.8005130}{doi:10.1109/LICS.2017.8005130}.

\bibitem{deJongEscardo2021a}
Tom de~Jong and Mart{\'i}n~H{\"o}tzel Escard{\'o}.
\newblock Domain theory in constructive and predicative univalent foundations.
\newblock In Christel Baier and Jean Goubault-Larrecq, editors, {\em 29th EACSL
  Annual Conference on Computer Science Logic (CSL 2021)}, volume 183 of {\em
  Leibniz International Proceedings in Informatics (LIPIcs)}, pages
  28:1--28:18. Schloss Dagstuhl--Leibniz-Zentrum f{\"u}r Informatik, 2021.
\newblock
  \href{https://dx.doi.org/10.4230/LIPIcs.CSL.2021.28}{doi:10.4230/LIPIcs.CSL.2021.28}.

\bibitem{deJongEscardo2021b}
Tom de~Jong and Mart{\'i}n~H{\"o}tzel Escard{\'o}.
\newblock Predicative aspects of order theory in univalent foundations, 2021.
\newblock To appear in the proceedings of \emph{FSCD 2021}, volume 195 of
  \emph{LIPIcs}.
  \href{https://arxiv.org/abs/2102.08812}{arXiv[math.LO]:2102.08812}.

\bibitem{EscardoKnapp2017}
Mart{\'i}n~H. Escard{\'o} and Cory~M. Knapp.
\newblock Partial elements and recursion via dominances in univalent type
  theory.
\newblock In Valentin Goranko and Mads Dam, editors, {\em 26th EACSL Annual
  Conference on Computer Science Logic (CSL 2017)}, volume~82 of {\em Leibniz
  International Proceedings in Informatics (LIPIcs)}, pages 21:1--21:16.
  Schloss Dagstuhl--Leibniz-Zentrum f\"ur Informatik, 2017.
\newblock
  \href{https://dx.doi.org/10.4230/LIPIcs.CSL.2017.21}{doi:10.4230/LIPIcs.CSL.2017.21}.

\bibitem{Escardo2018}
Mart{\'i}n~H{\"o}tzel Escard\'o.
\newblock Constructive mathematics in univalent type theory.
\newblock Slides for a talk at \emph{Homotopy Type Theory Electronic Seminar
  Talks}, 26 April.
  \url{https://www.uwo.ca/math/faculty/kapulkin/seminars/hottestfiles/Escardo-2018-04-26-HoTTEST.pdf},
  2018.

\bibitem{TypeTopology}
Mart\'in~H{\"o}tzel Escard{\'o}.
\newblock {TypeTopology --- Various new theorems in constructive univalent
  mathematics written in {A}gda}.
\newblock \url{https://github.com/martinescardo/TypeTopology}, 2019.

\bibitem{HindleySeldin2008}
J.~Roger Hindley and Jonathan~P. Seldin.
\newblock {\em Lambda-Calculus and Combinators, an Introduction}.
\newblock Cambridge University Press, 2 edition, 2008.
\newblock
  \href{https://dx.doi.org/10.1017/cbo9780511809835}{doi:10.1017/cbo9780511809835}.

\bibitem{HuguninMail2017}
Jasper Hugunin.
\newblock {Characterizing the equality of Indexed W types}, 9 2017.
\newblock Post on the \emph{Homotopy Type Theory} mailing list.
  \url{https://groups.google.com/d/msg/homotopytypetheory/qj2OvRvqf-Q/hGFBczJGAwAJ}.

\bibitem{Hugunin2017}
Jasper Hugunin.
\newblock {IWTypes --- A Coq development of the theory of Indexed W types with
  function extensionality}.
\newblock \url{https://github.com/jashug/IWTypes}, 2017.

\bibitem{Knapp2018}
Cory Knapp.
\newblock {\em Partial Functions and Recursion in Univalent Type Theory}.
\newblock PhD thesis, School of Computer Science, University of Birmingham, 6
  2018.

\bibitem{Kock1991}
Anders Kock.
\newblock Algebras for the partial map classifier monad.
\newblock In Aurelio Carboni, Maria~Cristina Pedicchio, and Guiseppe Rosolini,
  editors, {\em Category Theory}, volume 1488 of {\em Lecture Notes in
  Mathematics}, pages 262--278. Springer, 1991.
\newblock \href{https://dx.doi.org/10.1007/BFB0084225}{doi:10.1007/BFB0084225}.

\bibitem{Krausetal2017}
Nicolai Kraus, Martín Escardó, Thierry Coquand, and Thorsten Altenkirch.
\newblock Notions of anonymous existence in {Martin-L\"of Type Theory}.
\newblock {\em Logical Methods in Computer Science}, 13, 2017.
\newblock
  \href{https://dx.doi.org/10.23638/LMCS-13(1:15)2017}{doi:10.23638/LMCS-13(1:15)2017}.

\bibitem{Plotkin1977}
G.D. Plotkin.
\newblock {LCF} considered as a programming language.
\newblock {\em Theoretical Computer Science}, 5(3):223--255, 1977.
\newblock
  \href{https://dx.doi.org/10.1016/0304-3975(77)90044-5}{doi:10.1016/0304-3975(77)90044-5}.

\bibitem{Plotkin1983}
Gordon Plotkin.
\newblock Domains.
\newblock Lecture notes on domain theory, known as the \emph{Pisa Notes}.
  \url{https://homepages.inf.ed.ac.uk/gdp/publications/Domains_a4.ps}, 1983.

\bibitem{ReusStreicher1999}
Bernhard Reus and Thomas Streicher.
\newblock General synthetic domain theory --- a logical approach.
\newblock {\em Mathematical Structures in Computer Science}, 9(2):177–--223,
  1999.
\newblock
  \href{https://dx.doi.org/10.1017/S096012959900273X}{doi:10.1017/S096012959900273X}.

\bibitem{Scott1993}
Dana~S. Scott.
\newblock A type-theoretical alternative to {ISWIM}, {CUCH}, {OWHY}.
\newblock {\em Theoretical Computer Science}, 121(1):411--440, 1993.
\newblock
  \href{https://dx.doi.org/10.1016/0304-3975(93)90095-B}{doi:10.1016/0304-3975(93)90095-B}.

\bibitem{Streicher2006}
Thomas Streicher.
\newblock {\em Domain-Theoretic Foundations of Functional Programming}.
\newblock World Scientific, 2006.
\newblock \href{https://dx.doi.org/10.1142/6284}{doi:10.1142/6284}.

\bibitem{Swan2019HoTTEST}
Andrew~W. Swan.
\newblock Choice, collection and covering in cubical sets.
\newblock Talk in the electronic \emph{HoTTEST} seminar, 6 November. Slides at
  \url{https://www.uwo.ca/math/faculty/kapulkin/seminars/hottestfiles/Swan-2019-11-06-HoTTEST.pdf}.
  Video recording at \url{https://www.youtube.com/watch?v=r9KbEOzyr1g}, 2019.

\bibitem{Swan2019}
Andrew~W. Swan.
\newblock Counterexamples in cubical sets.
\newblock Slides for a talk at \emph{Mathematical Logic and Constructivity: The
  Scope and Limits of Neutral Constructivism}, Stockholm, 20 August.
  \url{http://logic.math.su.se/mloc-2019/slides/Swan-mloc-2019-slides.pdf},
  2019.

\bibitem{HoTTbook}
{The Univalent Foundations Program}.
\newblock {\em Homotopy Type Theory: Univalent Foundations of Mathematics}.
\newblock \url{https://homotopytypetheory.org/book}, Institute for Advanced
  Study, 2013.

\bibitem{UniMath}
Vladimir Voevodsky, Benedikt Ahrens, Daniel Grayson, et~al.
\newblock {UniMath --- a computer-checked library of univalent mathematics},
  2019.
\newblock \url{https://github.com/UniMath/UniMath}.

\end{thebibliography}

\end{document}